\newtheorem{thm}{Theorem}[section]
\newtheorem{prop}[thm]{Proposition}
\newtheorem{cor}[thm]{Corollary}
\newtheorem{lemma}[thm]{Lemma}
\theoremstyle{definition}
\newtheorem{defn}[thm]{Definition}
\theoremstyle{remark}
\newtheorem{THEOREM}{Theorem}
\newenvironment{red}
{\relax\color{red}}
{\hspace*{.5ex}\relax}
\newcommand{\ber}{\begin{red}}
\newcommand{\er}{\end{red}}
\newenvironment{verd}
{\relax\color{magenta}}
{\hspace*{.5ex}\relax}
\newcommand{\bg}{\begin{verd}}
\newcommand{\eg}{\end{verd}}
\numberwithin{equation}{subsection}
\newcommand{\Z}{\mathbb{Z}}
\newcommand{\Q}{\mathbb{Q}}
\newcommand{\C}{\mathbb{C}}
\newcommand{\A}{\mathbb{A}}
\newcommand{\g}{\mathfrak{g}}
\newcommand{\Hom}{\mathrm{Hom}}
\newcommand{\End}{\mathrm{End}}
\newcommand{\wt}{{\rm wt}}
\newcommand{\Span}{{\rm Span}}
\newcommand{\Top}{{\rm Top}}
\newcommand{\Soc}{{\rm Soc}}
\newcommand{\Rad}{{\rm Rad}}
\newcommand{\proj}{\mathrm{proj}}
\newcommand{\im}{\mathrm{Im}}
\newcommand{\Ker}{\mathrm{Ker}}
\newcommand{\rlQ}{\mathsf{Q}}   
\newcommand{\wlP}{\mathsf{P}}   
\newcommand{\weyl}{\mathsf{W}}  
\newcommand{\cmA}{\mathsf{A}}  
\newcommand{\ST}{\mathsf{ST}}   
\newcommand{\res}{\mathrm{res}}   
\newcommand{\sg}{\mathfrak{S}}   
\newcommand{\F}{\mathcal{F}}   
\newcommand{\bR}{\mathbf{k}}   
\newcommand{\sB}{\mathbb{B}}   
\newcommand{\KLR}{R^{\Lambda_0}}   
\newcommand{\sBox}[1]
{
\xy
(-2,-2)*{};(2,-2)*{} **\dir{-};
(-2,2)*{};(2,2)*{} **\dir{-};
(-2,-2)*{};(-2,2)*{} **\dir{-};
(2,-2)*{};(2,2)*{} **\dir{-};
(0,0)*{#1};
\endxy
}
\newcommand{\ssBox}[1]
{
\xy
(-1.5,-1.5)*{};(1.5,-1.5)*{} **\dir{-};
(-1.5,1.5)*{};(1.5,1.5)*{} **\dir{-};
(-1.5,-1.5)*{};(-1.5,1.5)*{} **\dir{-};
(1.5,-1.5)*{};(1.5,1.5)*{} **\dir{-};
(0,0)*{ _{#1}};
\endxy
}
\begin{document}

\title[Representation type of finite quiver Hecke algebras of type $A^{(2)}_{2\ell}$]
{Representation type of finite quiver \\
Hecke algebras of type $A^{(2)}_{2\ell}$}

\author[Susumu Ariki]{Susumu Ariki$^{1}$}
\thanks{$^1$ This work is supported in part by JSPS, Grant-in-Aid for Scientific Research (B) 23340006.}
\address{Department of Pure and Applied Mathematics, Graduate School of Information
Science and Technology, Osaka University, Toyonaka, Osaka 560-0043, Japan}
\email{ariki@ist.osaka-u.ac.jp}

\author[Euiyong Park]{Euiyong Park$^{2}$}
\thanks{$^2$ This work is supported
by JSPS Postdoctoral Fellowship for Foreign Researchers.}
\address{Department of Mathematics, University of Seoul, Seoul 130-743, Korea}
\email{epark@uos.ac.kr}




\begin{abstract}
We study cyclotomic quiver Hecke algebras $R^{\Lambda_0}(\beta)$ in type
$A^{(2)}_{2\ell}$, where $\Lambda_0$ is the fundamental weight. The algebras
are natural $A^{(2)}_{2\ell}$-type analogue of Iwahori-Hecke algebras
associated with the symmetric group, from the viewpoint of the Fock space
theory developed by the first author and his collaborators. We give a
formula for the dimension of the algebra, and a simple criterion to
tell the representation type. The
criterion is a natural generalization of Erdmann and Nakano's for the
Iwahori-Hecke algebras. Except for the examples coming from cyclotomic Hecke
algebras, no results of these kind existed for cyclotomic quiver Hecke
algebras, and our results are the first instances beyond the case of
cyclotomic Hecke algebras.
\end{abstract}

\maketitle


\vskip 2em

\section*{Introduction}

Block algebras of the finite Hecke algebra associated with the symmetric group have been studied extensively in 1990's. Let $\bR$ be an algebraically closed field,
$q\in\bR^\times$ the parameter of the Hecke algebra, and $e$ the \emph{quantum characteristic} defined by
$$
e=\min\{ a\in\Z_+ \mid 1+q+\cdots+q^{a-1}=0 \;\text{holds in $\bR$.} \}.
$$
Then, the block algebras are parametrized by pairs of an $e$-core partition and an $e$-weight.
This well-known fact, the \emph{Nakayama conjecture} for Hecke algebras, was proved by James and Mathas \cite[Thm. 4.29]{JM97} based on earlier work by Dipper and James \cite{DJ87}. Nowadays it is understood as one of the results which fits in Fock space theory for cyclotomic Hecke algebras: the block algebras categorify weight spaces of the basic $\g(A^{(1)}_{e-1})$-module $V(\Lambda_0)$. In fact, this example was the origin of the conjecture by Lascoux, Leclerc and Thibon, where the Fock space theory started. Let $\delta$ be the null root and let $W$ be the Weyl group i.e. the affine symmetric group. We recall that every weight that appear in $V(\Lambda_0)$ has the form $w\Lambda_0-k\delta$, where $w\in W$ and $k\in\Z_{\geq0}$. Then,  in the Misra-Miwa realization of the Fock space, $w\Lambda_0$ is the $e$-core partition, and $k$ is the $e$-weight.
Thus, the modular representation theory of finite Hecke algebras turned out to be Lie theoretic, and we have various interesting questions to ask under the philosophy of categorification.
For example, research in the early stage focused on higher level analogues, that is, the modular representation theory of cyclotomic Hecke algebras, and finite Hecke algebras of type $BC$ (and type $D$ by Clifford theory) are special cases of them. See \cite{Ar01} and \cite{Ar07}, for example.

An important turning point for the philosophy was the introduction of  cyclotomic quiver Hecke algebras by Khovanov and Lauda in \cite{KL09}, \cite{KL11}.
The algebras are associated with dominant integral weight $\Lambda$, certain set of polynomials and non-negative integral linear combination $\beta$ of simple roots. We denote the algebra by $R^\Lambda(\beta)$.
As Rouquier \cite{R08} also developed another theory to categorify integrable highest weight modules, which generalizes his work with Chuang \cite{CR08}, it is also called the \emph{cyclotomic KLR algebra} by various authors. Further, theorems from Brundan and Kleshchev \cite{BK09}, Rouquier \cite{R08} and Lyle and Mathas \cite{LM07} combined imply that block algebras of the cyclotomic Hecke algebras are cyclotomic quiver Hecke algebras of type $A^{(1)}_{e-1}$. More precisely, we label the vertices of the Dynkin diagram with $\Z/e\Z$ as usual, and we let $A=(a_{ij})_{i,j\in\Z/e\Z}$ be the Cartan matrix. Then their quiver Hecke algebra is associated with polynomials
$$Q_{i,j}(u,v)=1, \text{if $a_{ij}=0$, and}\;\;
Q_{i,i+1}(u,v)=Q_{i+1,i}(v,u)=-(u-v)^{-a_{i,i+1}}, \text{for $i\in\Z/e\Z$.}$$
Therefore, the cyclotomic quiver Hecke algebras are indeed generalization of cyclotomic Hecke algebras.

Many papers on the quiver Hecke algebras have already appeared, e.g.  \cite{BKOP}, \cite{HMM09}, \cite{KP11}, \cite{KR08}, \cite{KR11}, \cite{LV11} or \cite{VV11}. But most of them study labeling or construction of irreducible modules. The only case which receives rather detailed study is type $A^{(1)}_{e-1}$, where the isomorphism theorem by Brundan-Kleshchev \cite{BK09} and Rouquier \cite{R08} allows us to reduce problems to well-studied cyclotomic Hecke algebras.

Our aim is to show some new results which go beyond this stage. Note that, beyond classifying or constructing irreducible modules,
first questions to be asked are representation type, shape of the Auslander-Reiten quiver and others.
In this paper, we start with $\KLR(\beta)$ in type $A^{(2)}_{2\ell}$ and their representation type.
As we have explained in the above, the algebras are natural $A^{(2)}_{2\ell}$-type analogues of the Iwahori-Hecke algebra associated with the symmetric group. Thus, we call them \emph{finite quiver Hecke algebras of  type $A^{(2)}_{2\ell}$}. We give a dimension formula for $\KLR(\beta)$, some structure theorem, and prove Erdmann-Nakano type theorem which tells the representation type of  $\KLR(\beta)$. Note that the Erdmann-Nakano theorem treats $e=2$ case separately, and tame representation type appears only in this case.
 The reader might expect that the statement in type  $A^{(2)}_2$ would be different from the other $A^{(2)}_{2\ell}$, for $\ell\geq2$, by the comparison with the $A^{(1)}_{e-1}$ cases.
However, we find that it is not the case, and the criterion is uniform for all $\ell$. Apart this point, the criterion is exactly the same as that of the Erdmann-Nakano theorem, for $e\geq3$, as follows.

\begin{THEOREM}[Theorem \ref{Thm: repn type of R(beta)}]
Suppose that $\KLR(\beta)$ is a finite quiver Hecke algebra of type $A^{(2)}_{2\ell}$, for $\ell\ge1$. We denote the Weyl group of type $A^{(2)}_{2\ell}$ by $\weyl$, $\weyl\Lambda_0$ the $\weyl$-orbit through $\Lambda_0$. There exist unique
$\kappa\in \weyl\Lambda_0$ and unique $k\in\Z_{\ge0}$ such that
$\beta=\Lambda_0-\kappa+k\delta$. Then, $\KLR(\beta)$ is
\begin{itemize}
\item[(1)]
simple if $k=0$,
\item[(2)]
of finite representation type but not semisimple if $k=1$,
\item[(3)]
of wild representation type if $k\ge2$,
\item[(4)]
and tame representation type does not occur.
\end{itemize}
\end{THEOREM}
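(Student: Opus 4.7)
The plan is to split into the three cases distinguished by the value of $k$, and to use a different tool for each. Throughout one exploits the fact that cyclotomic quiver Hecke algebras are known to be symmetric algebras and that $R^{\Lambda_0}$ categorifies the basic representation $V(\Lambda_0)$, so the number of non-isomorphic simple $\KLR(\beta)$-modules equals the weight multiplicity $\dim V(\Lambda_0)_{\Lambda_0-\beta}$. The claim (4) that tame type does not occur will follow from (1)--(3) and the Drozd trichotomy once (3) is established, so no separate argument is needed for it.

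For $k=0$ the weight $\Lambda_0-\beta=\kappa$ lies in the Weyl group orbit $\weyl\Lambda_0$, so its multiplicity in $V(\Lambda_0)$ equals $1$. There is therefore a unique simple $\KLR(\beta)$-module, say $L$. Its dimension can be extracted from the character of $V(\Lambda_0)$, and on comparing $(\dim L)^2$ with the dimension formula for $\KLR(\beta)$ proved earlier in the paper, one concludes that $\KLR(\beta)\cong\End_{\bR}(L)$, which is simple.

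For $k=1$ the same categorification identity first produces the correct number of simples, and a sum-of-squares comparison with the dimension formula yields a strict inequality that rules out semisimplicity. To obtain finite representation type I would identify the basic algebra of $\KLR(\beta)$ explicitly: since $\KLR(\beta)$ is symmetric, it suffices to show its Gabriel quiver is a Brauer tree configuration, which I would do by computing the relevant $\Ext^1$ spaces between simples from the crystal-theoretic structure of $V(\Lambda_0)$ at weights $\Lambda_0-\beta\pm\alpha_i$ and then invoking Rickard's classification of symmetric algebras of finite representation type.

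For $k\ge2$ the strategy is to exhibit a simple module $L$ (or pair $L,L'$) for which the dimensions of $\Ext^1$ spaces are large enough to force wildness, e.g. by producing a vertex of the Gabriel quiver of $e\KLR(\beta)e$ carrying at least two loops, or by cutting out an idempotent subalgebra whose basic algebra contains the path algebra of an extended Dynkin quiver such as $\widetilde{D}_4$. The existence of such a wild subquotient will again be read off from the weight-space multiplicities of $V(\Lambda_0)$ at weights adjacent to $\Lambda_0-\beta$ in the crystal graph, combined with the structure theorem from earlier in the paper. The principal technical obstacle is the rank-one case $\ell=1$, where the Dynkin diagram of $A^{(2)}_2$ has only two nodes and the combinatorics are a priori close to the exceptional $e=2$ situation of Erdmann--Nakano; handling this case uniformly with $\ell\ge2$ (so that the exceptional tame behaviour is excluded rather than merely shifted) is the delicate new input, and I expect it to require an ad hoc construction of a wild factor specifically adapted to the short root in $A^{(2)}_2$.
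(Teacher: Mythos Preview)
Your proposal has a structural gap: you never reduce from the general weight $\Lambda_0-\kappa+k\delta$ to the single weight $k\delta$. The paper's organizing step is Corollary~\ref{Cor: repn type}: using Chuang--Rouquier $sl_2$-categorification (Theorem~\ref{Thm: Chuang and Rouquier}) together with Rickard's theorem and Krause's theorem, one shows that $\KLR(\Lambda_0-w\Lambda_0+k\delta)$ and $\KLR(k\delta)$ are derived equivalent, hence stably equivalent, hence of the same representation type. This collapses each case to a \emph{single} algebra $\KLR(k\delta)$. Without this reduction, your plan for $k=1$ and $k\ge2$ would have to treat infinitely many non-isomorphic algebras (one for each $\kappa\in\weyl\Lambda_0$), and the crystal/weight-multiplicity data you invoke does not vary uniformly enough across the orbit to make that feasible. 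Even for $k=0$, the paper obtains simplicity by noting that derived equivalence preserves the center and $\KLR(0)=\bR$; your dimension-count argument could be made to work but is not what is actually done.

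There are also two more local problems. For $k=1$ you appeal to ``Rickard's classification of symmetric algebras of finite representation type''; no such classification exists, and in any case the paper proceeds in the opposite direction: it explicitly constructs the simple modules $S_i$ and their projective covers $F_iL_i$, reads off the radical layers, and recognizes $\KLR(\delta)$ as a Brauer tree algebra (Theorem~\ref{k=1}). For $k\ge2$ your plan to detect wildness via $\Ext^1$ dimensions read off from weight multiplicities is too vague to be a strategy; the paper instead computes a specific idempotent truncation $e_h\KLR(2\delta)e_h$, identifies it with a quotient of $\mathrm{Mat}(2,\bR[x]/(x^3))\otimes\bR[t]$ (Proposition~\ref{structure of sph. subalgebra}), and exhibits a wild local quotient $\bR[x,t]/(x^3,x^2t,xt^2,t^3)$. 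Wildness for $k\ge3$ is then propagated from $k=2$ using the functors $F_i$ and Lemma~\ref{reduction to critical rank}, not established independently. Your intuition that $\ell=1$ is the delicate case is correct, but the resolution is this explicit idempotent computation, not an ad hoc Ext argument.
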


The following is our dimension formula. We do not explain the notation here. Note that almost no result is known for the dimension of cyclotomic quiver Hecke algebras, except for block algebras of cyclotomic Hecke algebras. Another importance of Theorem B is that it tells when $e(\nu)=0$, for $\nu\in I^n$. It is repeatedly used in the proof of Theorem A.

\begin{THEOREM}[Theorem \ref{Thm: dimension formula}]
Let $\lambda \vdash n$ be a shifted Young diagram consisting of $n$ boxes. For $\beta  \in \rlQ^+$ with $|\beta|=n$ and $\nu, \nu' \in I^\beta$, we have
\begin{align*}
\dim e(\nu') \KLR(n) e(\nu) &= \sum_{\lambda \vdash n} 2^{-\langle d, \wt(\lambda)\rangle - l(\lambda)} K(\lambda, \nu')K(\lambda, \nu), \\
\dim  \KLR(\beta)  &= \sum_{\lambda \vdash n,\ \wt(\lambda)=\Lambda_0 - \beta } 2^{-\langle d, \wt(\lambda)\rangle - l(\lambda)} |\ST(\lambda)|^2, \\[5pt]
\dim  \KLR(n)  &= \sum_{\lambda \vdash n} 2^{-\langle d, \wt(\lambda)\rangle  - l(\lambda)} |\ST(\lambda)|^2.
\end{align*}
\end{THEOREM}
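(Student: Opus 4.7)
The plan is to prove the first (most refined) identity for $\dim e(\nu')\KLR(n)e(\nu)$ and deduce the other two by summation. The second formula follows from the first by summing over $\nu, \nu' \in I^\beta$, using $\sum_{\nu\in I^\beta} K(\lambda,\nu) = |\ST(\lambda)|$ whenever $\wt(\lambda)=\Lambda_0-\beta$, together with the vanishing $K(\lambda,\nu)=0$ otherwise (since a standard shifted tableau of shape $\lambda$ with residue sequence $\nu$ forces $\wt(\lambda)=\Lambda_0-\beta$). The third formula is in turn the sum of the second over all $\beta\in \rlQ^+$ with $|\beta|=n$, reflecting the block decomposition $\KLR(n)=\bigoplus_{|\beta|=n}\KLR(\beta)$.

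To establish the first identity, I would construct a graded cellular-type basis of $\KLR(n)$ indexed by equivalence classes of triples $(\lambda, S, T)$, where $\lambda$ is a shifted Young diagram of size $n$, $S,T\in\ST(\lambda)$, and the equivalence collapses orbits of cardinality $2^{\langle d,\wt(\lambda)\rangle + l(\lambda)}$. This would be an $A^{(2)}_{2\ell}$-analogue of the Hu--Mathas cellular basis for the type $A^{(1)}_{e-1}$ cyclotomic quiver Hecke algebra, adapted to shifted Young diagrams. On such a basis, the product $e(\nu')b_{[\lambda,S,T]}e(\nu)$ is nonzero exactly when $\res(S)=\nu'$ and $\res(T)=\nu$, so the contribution of each shape $\lambda$ is precisely $2^{-\langle d,\wt(\lambda)\rangle-l(\lambda)}K(\lambda,\nu')K(\lambda,\nu)$, and summing over $\lambda$ yields the formula.

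The delicate point, and the main obstacle, is pinning down the precise $2$-adic identification in this cellular basis. My approach is inductive on $n$: starting from $\KLR(0)=\bR$, extend the basis along $\beta\mapsto\beta+\alpha_i$ using the standard short exact sequence relating $\KLR(\beta+\alpha_i)e(\beta,i)$ to $\KLR(\beta)$, combined with the Misra--Miwa-type realization of $V(\Lambda_0)$ on strict partitions for type $A^{(2)}_{2\ell}$. The factor $l(\lambda)$ reflects a ``row-spin'' degree of freedom — one order-$2$ choice per row of $\lambda$ — inherited from the Hecke--Clifford / Sergeev superalgebra which is closely related to $\KLR(n)$ in this type, while $\langle d,\wt(\lambda)\rangle$ accounts for additional identifications accumulated along the null-root direction as one moves down the Fock space in depth. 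Verifying that adding a single box of residue $i$ to $\lambda$ multiplies $|\ST(\lambda)|$ and the $2$-adic correction factor consistently is the crux; once the inductive step is confirmed, the three dimension identities follow simultaneously.
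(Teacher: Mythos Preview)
Your plan hinges entirely on constructing a Hu--Mathas-type cellular basis for $\KLR(\beta)$ in type $A^{(2)}_{2\ell}$, but you do not actually construct it; you only list the properties it would need to have. That is the gap. No such basis is known for these algebras, and the paper says so explicitly: ``dimension of cyclotomic quiver Hecke algebras is not known except for block algebras of cyclotomic Hecke algebras,'' and the appendix on generalized cellularity admits ``we know little about the structure of cell modules.'' So the step you flag as ``the main obstacle'' is genuinely unresolved, not merely delicate. There is also a sign issue in your description: since $\langle d,\wt(\lambda)\rangle=-k_0$ with $k_0\ge l(\lambda)$ (the first box of each row has residue $0$), the factor $2^{-\langle d,\wt(\lambda)\rangle-l(\lambda)}=2^{k_0-l(\lambda)}\ge 1$, so a pair $(S,T)$ would have to contribute \emph{several} basis elements rather than be collapsed by an equivalence.

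The paper's argument is completely different and bypasses any basis construction. It uses Kang--Kashiwara categorification to identify $K_0(\KLR)$ with $V(\Lambda_0)_\Z$, then shows that the Shapovalov form on $V(\Lambda_0)$ is realized by $([M],[N])=\dim\Hom(M,N)$ (this uses biadjointness of $E_i,F_i$ and the symmetry of $\dim\Hom$ proved in Lemma~\ref{symmetric hom}). Since $\KLR(n)e(\nu)=F_{\nu_n}\cdots F_{\nu_1}\KLR(0)$, one gets
\[
\dim e(\nu')\KLR(n)e(\nu)=(f_{\nu'_n}\cdots f_{\nu'_1}|0\rangle,\ f_{\nu_n}\cdots f_{\nu_1}|0\rangle),
\]
computed inside the neutral-fermion Fock space $\F\supset V(\Lambda_0)$. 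Lemma~\ref{Lem: e_nu and f_nu} evaluates $f_{\nu_n}\cdots f_{\nu_1}|0\rangle=\sum_\mu K(\mu,\nu)|\mu\rangle$ and $e_{\nu'_1}\cdots e_{\nu'_n}|\lambda\rangle=2^{-\langle d,\wt(\lambda)\rangle-l(\lambda)}K(\lambda,\nu')|0\rangle$, and the pairing immediately gives the first formula; the second and third follow by your summation argument, which is correct. If you want to salvage your approach, you would need an honest construction of the basis together with a proof that it spans and is linearly independent; short of that, the Fock-space route is the available proof.
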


In proving our results, solution to the cyclotomic categorification conjecture from \cite{KL09} by Kang and Kashiwara in \cite{KK11} and \cite{Kash11} is used in an essential way. The strategy of the proof works for {\it general} affine types.
In particular, the argument in our paper gives a completely new proof of the original Erdmann-Nakano theorem \cite[1.2]{EN02} as well.
We will consider the other affine types than $A^{(1)}_{e-1}$ and $A^{(2)}_{2\ell}$ in separate papers.
We remark that Hecke-Clifford superalgebras and quiver Hecke superalgebras also categorify the basic representation $V(\Lambda_0)$ in type $A_{2\ell}^{(2)}$ and its quantization $V_q(\Lambda_0)$ respectively \cite{BK01, KKO12, KKT11}.
We think that the finite quiver Hecke algebras deserve detailed study because they give uniform generalization of the Iwahori-Hecke algebra. In the same spirit, studying uniform generalization of the Hecke-Clifford superalgebra by categorification using superalgebras might be a different direction to be pursued.

The structure of the paper is as follows. Section $1$ is for preliminaries. In section $2$, we recall cyclotomic quiver Hecke algebras and categorification results. In section $3$, we prove the dimension formula. In section $4$, we prove the Erdmann-Nakano type theorem for the representation type of $\KLR(\beta)$. Section $5$ is an appendix for generalized cellularity. We propose a generalized cellular algebra structure for finite quiver Hecke algebras of type $A^{(2)}_{2\ell}$. We are grateful to Mr. Mori for letting us know his work on generalized cellularity and some comments on our paper.

\vskip 1em

\section{Preliminaries}

In this section, we quickly review the Fock space of neutral fermions as a module over the Kac-Moody algebra of type $A_{2\ell}^{(2)}$.
The Fock space is realized in terms of combinatorics of shifted Young diagrams, and it will be used in a crucial manner
in proving the dimension formula for finite quiver Hecke algebras.

\subsection{Shifted Young diagrams} \label{Section: Young}

An array $\lambda$ of a finite number of boxes arranged in $l$ rows is a {\it shifted Young diagram of depth} $l$, if
\begin{itemize}
\item[(i)] the $i$th row from the top, for $1\le i\le l$, starts with its leftmost box in the $i$th column and there is no gap before it ends on the rightmost box,
\item[(ii)] each row has strictly shorter length than its predecessor.
\end{itemize}
We denote the depth $l$ by $l(\lambda)$ and we denote $\lambda \vdash n$ if $\lambda$ consists of $n$ boxes.
We write $(i,j) \in \lambda$ if there exists a box in the $i$th row and the $j$th column.
We identify a shifted Young diagram $\lambda$ with the strict partition $(\lambda_1 > \lambda_2 > \lambda_3 > \cdots )$, where
$\lambda_i$ is the number of boxes in the $i$th row of $\lambda$. For $\lambda=(\lambda_1> \lambda_2> \cdots )\vdash n$ and
$\mu=(\mu_1> \mu_2> \cdots )\vdash n$, we say $\lambda$ \emph{dominates} $\mu$ and write $\lambda \trianglerighteq \mu$ if
\begin{align} \label{Eq: order on SYD}
 \sum_{i=1}^k \lambda_i \ge \sum_{i=1}^k \mu_i, \text{ for }\;1\le k\le l(\lambda).
\end{align}

A {\it standard tableau} $T$ of shape $\lambda\vdash n$ is
a filling of $n$ boxes of $\lambda$ with numbers $1$ to $n$ such that (i) each number is used exactly once, (ii) the numbers in rows
and columns increase from left to right and top to bottom, respectively. Let $\ST(\lambda)$ be the set of all standard tableaux of shape $\lambda$. For example,
there are 3 standard tableaux of shape $(4,1)$:
\vskip 0.3em
$$
\xy
(0,12)*{};(24,12)*{} **\dir{-};  
(0,6)*{};(24,6)*{} **\dir{-};
(6,0)*{};(12,0)*{} **\dir{-};
(0,6)*{};(0,12)*{} **\dir{-};
(6,0)*{};(6,12)*{} **\dir{-};
(12,0)*{};(12,12)*{} **\dir{-};
(18,6)*{};(18,12)*{} **\dir{-};
(24,6)*{};(24,12)*{} **\dir{-};
(3,9)*{1}; (9,9)*{2}; (15,9)*{3}; (21,9)*{4};
(9,3)*{5};
(35,12)*{};(59,12)*{} **\dir{-}; 
(35,6)*{};(59,6)*{} **\dir{-};
(41,0)*{};(47,0)*{} **\dir{-};
(35,6)*{};(35,12)*{} **\dir{-};
(41,0)*{};(41,12)*{} **\dir{-};
(47,0)*{};(47,12)*{} **\dir{-};
(53,6)*{};(53,12)*{} **\dir{-};
(59,6)*{};(59,12)*{} **\dir{-};
(38,9)*{1}; (44,9)*{2}; (50,9)*{3}; (56,9)*{5};
(44,3)*{4};
(70,12)*{};(94,12)*{} **\dir{-}; 
(70,6)*{};(94,6)*{} **\dir{-};
(76,0)*{};(82,0)*{} **\dir{-};
(70,6)*{};(70,12)*{} **\dir{-};
(76,0)*{};(76,12)*{} **\dir{-};
(82,0)*{};(82,12)*{} **\dir{-};
(88,6)*{};(88,12)*{} **\dir{-};
(94,6)*{};(94,12)*{} **\dir{-};
(73,9)*{1}; (79,9)*{2}; (85,9)*{4}; (91,9)*{5};
(79,3)*{3};
(27,6)*{,}; (62,6)*{,}; (97,6)*{.};
\endxy
$$

For $\lambda = (\lambda_1 > \lambda_2 > \cdots ) \vdash n$, let $T^{\lambda}$ be the tableau of shape $\lambda$ whose $(i,j)$-entry is
$$(j-i)+1+\sum_{k=1}^{i-1} \lambda_k. $$ We call  $T^{\lambda}$ the {\it canonical tableau} of shape $\lambda$.
In the above example, the first tableau is the canonical tableau of shape $(4,1)$.
The following theorem is well-known.

\begin{thm}[\protect{cf. \cite[Thm.2.2.1,\ Cor.3.2.2]{Sagan90}}] \label{Thm: hook formula}
\begin{itemize}
\item[(1)] For $\lambda \vdash n$, the number of standard tableaux of shape $\lambda$ is given by the following hook formula.
$$ | \ST(\lambda)| = \frac{ n! }{ \prod_{(i,j)\in \lambda} h_{i,j} }, $$
where $h_{i,j}$ is the hook-length of $(i,j)\in\lambda$, i.e.,
$$ h_{i,j} =  | \{ (i,j') \in \lambda \mid j' \ge j \}| + | \{ (i',j) \in \lambda \mid i' > i \}| + | \{ (j+1,j') \in \lambda \mid j' > j \}|. $$

\item[(2)] We have the following equality.
$$ n! = \sum_{\lambda \vdash n} 2^{ n - l(\lambda)} |\ST(\lambda)|^2. $$
\end{itemize}
\end{thm}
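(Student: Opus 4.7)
The plan is to treat these as classical combinatorial identities for shifted tableaux, following the framework of Sagan's paper cited in the statement. For part~(1), the shifted hook length formula, I would use the shifted analogue of the Greene--Nijenhuis--Wilf probabilistic argument, the \emph{shifted hook walk}. Starting from a uniformly chosen box of $\lambda$, at each step one jumps uniformly at random to another box in the current shifted hook, terminating at an inner corner $c$. The heart of the proof is a careful bookkeeping of the transition probabilities which shows that the probability of terminating at a fixed corner $c$ equals $|\ST(\lambda \setminus c)| \cdot \prod_{(i,j)\in\lambda} h_{i,j} / (n \cdot n!)$. Summing over inner corners $c$ and using the straightforward recursion $|\ST(\lambda)| = \sum_c |\ST(\lambda \setminus c)|$ then yields the hook formula by induction on $n$.

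For part~(2), I would prove the identity via a shifted Robinson--Schensted correspondence (the Sagan--Worley insertion). To each $\sigma \in \sg_n$ one associates a triple $(P, Q, \varepsilon)$, where $P$ and $Q$ are standard shifted tableaux of the same shape $\lambda \vdash n$, and $\varepsilon$ is a sign vector making one of two choices (circled or uncircled) at each strictly off-diagonal box. Since a shifted Young diagram $\lambda$ of depth $l(\lambda)$ has exactly $l(\lambda)$ diagonal boxes and hence $n - l(\lambda)$ off-diagonal boxes, the bijectivity of this correspondence immediately gives
\[
n! \;=\; \sum_{\lambda \vdash n} 2^{n - l(\lambda)} |\ST(\lambda)|^2,
\]
which is precisely the claim. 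One can alternatively derive this from the decomposition of the regular representation of the spin double cover $\widetilde{\sg}_n$, where the $2^{n - l(\lambda)}$ factor appears as the dimension of the Clifford module attached to $\lambda$.

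The main obstacle in both parts is the verification of the shifted insertion and hook-walk combinatorics: shifted jeu de taquin is subtler than its straight-shape counterpart because one must handle bumps that cross the main diagonal and carefully track circled versus uncircled entries. These technicalities are carried out in full in Sagan's work, so in the spirit of keeping Section~1 a true preliminary I would simply cite Theorem~2.2.1 and Corollary~3.2.2 of \cite{Sagan90} rather than reproduce the arguments. The formulas will be used later only as black boxes, primarily to unpack the sum defining $\dim \KLR(\beta)$ in Section~3 and to count standard tableaux whose residue sequence avoids a given $\nu \in I^\beta$.
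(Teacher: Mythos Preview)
Your proposal is correct and matches the paper's treatment: the paper gives no proof of this theorem at all, merely stating it with the citation to \cite{Sagan90} and using both parts later as black boxes. Your sketches of the shifted hook walk for (1) and the Sagan--Worley insertion for (2) are accurate accounts of what lies behind the cited results, but the paper omits even that level of discussion, so your recommendation to simply cite \cite{Sagan90} is exactly what is done.
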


\vskip 1em

Let $\lambda \vdash n$ be a shifted Young diagram. We declare that each row has the residue pattern $$ 0\; 1\; 2\; \cdots\; \ell\; \cdots\; 2\; 1\; 0 ,$$ which repeats from left to right in each row, and define $\res(i,j)$ to be the residue of $(i,j)\in\lambda$. For example, if $\g$ is of type $A_{6}^{(2)}$ and $\lambda = (10,6,3,1)$, the residues are as follows:
$$
\xy
(0,12)*{};(60,12)*{} **\dir{-};
(0,6)*{};(60,6)*{} **\dir{-};
(6,0)*{};(42,0)*{} **\dir{-};
(12,-6)*{};(30,-6)*{} **\dir{-};
(18,-12)*{};(24,-12)*{} **\dir{-};
(0,12)*{};(0,6)*{} **\dir{-};
(6,12)*{};(6,0)*{} **\dir{-};
(12,12)*{};(12,-6)*{} **\dir{-};
(18,12)*{};(18,-12)*{} **\dir{-};
(24,12)*{};(24,-12)*{} **\dir{-};
(30,12)*{};(30,-6)*{} **\dir{-};
(36,12)*{};(36,0)*{} **\dir{-};
(42,12)*{};(42,0)*{} **\dir{-};
(48,12)*{};(48,6)*{} **\dir{-};
(54,12)*{};(54,6)*{} **\dir{-};
(60,12)*{};(60,6)*{} **\dir{-};
(3,9)*{0}; (9,9)*{1}; (15,9)*{2}; (21,9)*{3}; (27,9)*{2}; (33,9)*{1}; (39,9)*{0};(45,9)*{0};(51,9)*{1}; (57,9)*{2};
           (9,3)*{0}; (15,3)*{1}; (21,3)*{2}; (27,3)*{3}; (33,3)*{2}; (39,3)*{1};
                      (15,-3)*{0}; (21,-3)*{1}; (27,-3)*{2};
                                   (21,-9)*{0};
\endxy
$$
Thus, if we pick $(2,5)\in\lambda$ then it has residue $3$.  Now, we are ready to introduce the residue sequence of a tableau.

\begin{defn}
Let $T \in \ST(\lambda)$. We define the \emph{residue sequence} of $T$ by
$$ \res(T) = (\res(i_1,j_1), \res(i_2,j_2), \ldots, \res(i_n,j_n) )\in I^n,$$
where $(i_k,j_k)\in\lambda$ is the box filled with $k$ in $T$, for $1\leq k\leq n$.
\end{defn}

We remark that the residue pattern for shifted Young diagrams is the same as the pattern of Young walls defined by the level one perfect crystal of type $A_{2\ell}^{(2)}$
\cite{HK02, Kang03, KKMMNN91, KKMMNN92}, which we will use in Section \ref{Subsection: crystals}.  It reminds the reader of
the $q$-deformed Fock space introduced in \cite{KangKwon08}, and of the $q$-deformed Fock space from \cite{KMPY96}.
However, we must distinguish between categorification of integrable highest weight modules and its crystals.
It seems more natural to consider the embedding of
the basic module $V(\Lambda_0)$ into the classical Fock space which we will recall in Section \ref{Subsection: Fock space}, rather than the $q$-deformed Fock space.
We suspect that it would be related to quasihereditary covers of finite quiver Hecke algebras and some theory of packets for those algebras would explain the distinction of the classical Fock space from the deformed Fock space.

\vskip 1em

\subsection{Cartan datum}\label{Cartan datum}

Let $I = \{0,1, \ldots, \ell \}$, for $\ell \ge 2$, and let $\cmA$ be the {\it affine Cartan matrix} of type $A_{2\ell}^{(2)}$, i.e.,
$$ \cmA = (a_{ij})_{i,j\in I} = \left(
                                  \begin{array}{ccccccc}
                                    2  & -2 & 0  & \ldots & 0  & 0  & 0 \\
                                    -1 &  2 & -1 & \ldots & 0  & 0  & 0 \\
                                    0  & -1 & 2  & \ldots & 0  & 0  & 0 \\
                                    \vdots   &  \vdots  &  \vdots  & \ddots &  \vdots  &  \vdots  & \vdots \\
                                    0  & 0  & 0  & \ldots & 2  & -1 & 0 \\
                                    0  & 0  & 0  & \ldots & -1 & 2  & -2 \\
                                    0  & 0  & 0  & \ldots & 0  & -1 & 2 \\
                                  \end{array}
                                \right).
  $$
When $\ell=1$, the affine Cartan matrix of type $A^{(2)}_2$ is $$\cmA = (a_{ij})_{i,j\in I} = \begin{pmatrix} 2  & -4 \\ -1 & 2 \end{pmatrix}.$$
We choose a realization of the Cartan matrix and obtain
an {\it affine Cartan datum} $(\cmA, \wlP, \Pi, \Pi^{\vee})$, where
\begin{itemize}
\item[(1)] $\cmA$ is the affine Cartan matrix of type $A_{2\ell}^{(2)}$ as above,
\item[(2)] $\wlP$ is a free abelian group of rank $\ell+2$, called the {\it weight lattice},
\item[(3)] $\Pi = \{ \alpha_i \mid i\in I \} \subset \wlP$, called the set of {\it simple roots},
\item[(4)] $\Pi^{\vee} = \{ h_i \mid i\in I\} \subset \wlP^{\vee} := \Hom( \wlP, \Z )$, called the set of {\it simple coroots},
\end{itemize}
which satisfy the following properties:
\begin{itemize}
\item[(a)] $\langle h_i, \alpha_j \rangle  = a_{ij}$ for all $i,j\in I$,
\item[(b)] $\Pi$ and $\Pi^{\vee}$ are linearly independent, respectively.
\end{itemize}
The weight lattice $\wlP$ has a symmetric bilinear pairing $(\ | \ )$ satisfying
$$ ( \alpha_i | \Lambda ) = d_i \langle h_i , \Lambda \rangle \ \text{ for all } \Lambda \in \wlP,$$
where $(d_0,d_1, \ldots, d_\ell) = (1,2, \ldots, 2,4)$.
We write $ \Lambda(h) = \langle h, \Lambda \rangle $ for $h \in \wlP^\vee,\ \Lambda \in \wlP$, and denote by
$$\wlP^+ = \{ \Lambda \in \wlP \mid \Lambda(h_i) \in \Z_{\ge 0},\ i\in I \}$$
the
set of {\it dominant integral weights}. We fix an element $d$ which satisfies $\langle d, \alpha_i \rangle=\delta_{i0}$.
It is called the \emph{scaling element},
and we assume that $\Pi^{\vee}\sqcup\{d\}$ is a $\Z$-basis of $P^{\vee}$ as in \cite[p.21]{HK02}.
Then,
for $i\in I$, we define the $i$th {\it fundamental weight} $\Lambda_i \in \wlP^+$, i.e. the  element defined by $\Lambda_i(d)=0$ and $\Lambda_i(h_j) = \delta_{ij}$, for $j\in I$.  The free abelian group $\rlQ = \bigoplus_{i \in I} \Z \alpha_i$ is called the {\it root lattice},  and $\rlQ^+ = \sum_{i\in I} \Z_{\ge 0} \alpha_i$ is
the {\it positive cone} of the root lattice. For $\beta=\sum_{i \in I} k_i \alpha_i \in \rlQ^+$, the {\it height} of $\beta$ is defined by $|\beta|=\sum_{i \in I} k_i$.

Let $\weyl=\langle r_i \mid i\in I\rangle$ be the {\it Weyl group} associated with $\cmA$. The Coxeter generators $\{r_i\}_{i\in I}$ act on $P$ by
$r_i\Lambda=\Lambda-\langle h_i, \Lambda\rangle\alpha_i$, for $\Lambda\in P$, as usual. Finally, the \emph{null root} in type $A^{(2)}_{2\ell}$ is
$$ \delta = 2\alpha_0 + 2\alpha_1 + \cdots + 2\alpha_{\ell-1} + \alpha_\ell, $$
for $\ell\ge1$. Note that $ \langle h_i, \delta\rangle = 0$ and $w \delta = \delta$, for $i\in I$ and $w\in \weyl$. The residue pattern  for shifted Young diagrams comes from the null root, and we denote
the residue pattern $01\ldots\ell\ldots10$ by $\nu_\delta$.

\vskip 1em

\subsection{The Fock space of neutral fermions}\label{Subsection: Fock space}

Recall that $\g(\cmA)$ is realized as $BKP_{2\ell+1}$ in the terminology of \cite[\S2]{DJKM82}. In other words, $\g(\cmA)$ is obtained by
$(2\ell+1)$-reduction from $\g(B_\infty)$. To obtain the basic $\g(\cmA)$-module $V(\Lambda_0)$, we start with the basic $\g(B_\infty)$-module.
Thus, we recall the Fock space of neutral fermions from \cite[\S6]{JM83}, which affords the basic spin representation of $\g(B_\infty)$.
Let $\mathsf{C}$ be the Clifford algebra over complex numbers $\C$ defined by generators $\phi_k$ ($k\in \Z$)
and the following relations:
$$ \phi_p \phi_q + \phi_q \phi_p = \left\{
                                     \begin{array}{ll}
                                       2 & \hbox{ if }  p=q=0, \\
                                       (-1)^{p} \delta_{p, -q} & \hbox{ otherwise.}
                                     \end{array}
                                   \right.
$$
The Chevalley generators of $\g(B_\infty)$ are given as follows:
\begin{align*}
\mathsf{e}_0 &=  \phi_{-1} \phi_{0}, \qquad \quad \ \  \mathsf{e}_j = (-1)^j \phi_{-j-1} \phi_{j} \qquad  \qquad  \qquad \qquad  (j\ge 1), \\
\mathsf{f}_0 &= \phi_{1} \phi_{0}, \qquad  \quad\ \ \ \ \mathsf{f}_j = (-1)^j \phi_{j+1} \phi_{-j} \qquad \qquad  \qquad \qquad  (j\ge 1), \\
\mathsf{h}_0 &= 2 \phi_{1} \phi_{-1}+1, \quad  \ \mathsf{h}_j = (-1)^j (\phi_{j} \phi_{-j} + \phi_{j+1} \phi_{-j-1}) \qquad \ (j\ge 1).
\end{align*}
Let $\mathsf{I}$ be the left ideal of  $\mathsf{C}$ generated by \{$\phi_k \mid k < 0\}$, and define $ \mathsf{F} = \mathsf{C} / \mathsf{I}$.
Then, $ \mathsf{F} $ is the direct sum of two irreducible $\g(B_\infty)$-modules
$\mathsf{F}_0$ and $\mathsf{F}_1$ with highest weights $\Lambda_0$,
and their highest weight vectors are
$|0\rangle := 1+\mathsf{I}$ and $|1\rangle :=\phi_0|0\rangle$, respectively.
We take a shifted Young diagram $\lambda$. It can be written as $\lambda = (\lambda_1 > \lambda_2 > \ldots > \lambda_{2r-1}> \lambda_{2r} \ge 0)$
for a unique $r$, where $\lambda_{2r}>0$ if $l(\lambda)$ is even and $\lambda_{2r}=0$ if $\ell(\lambda)$ is odd. We define
$|\lambda\rangle = \phi_{\lambda_1} \phi_{\lambda_2} \cdots  \phi_{\lambda_{2r}} |0\rangle. $ Then, they are linearly independent and
$$ \F:= \mathsf{F}_0 = \Span_\C \{ |\lambda\rangle \mid \lambda : \text{shifted Young diagrams}  \}. $$
It is easy to see that if $\lambda$ has a row of length $j+1$ and does not have a row of length $j$ (resp. $\lambda$ has a row of length $j$ and does not have
a row of length $j+1$), then we have
\begin{align} \label{Eq: action of efh for Binf}
\mathsf{e}_j |\lambda\rangle = |\mathsf{e}_j\lambda\rangle, \quad (\text{resp.}\;\;\mathsf{f}_j |\lambda\rangle = |\mathsf{f}_j\lambda\rangle\;),
\end{align}
where $\mathsf{e}_j\lambda$ (resp.\ $\mathsf{f}_j\lambda$) is the shifted Young diagram obtained from $\lambda$ by deleting the rightmost box of
the row of length $j+1$ (resp. adding a new box on the right of the row of length $j$).
Otherwise, we have $\mathsf{e}_j |\lambda\rangle=0$ (resp. $ \mathsf{f}_j |\lambda\rangle=0)$. Therefore, we may think of $\F$ as a based vector space
whose basis is given by $|\lambda\rangle$'s, and it is an integrable $\g(B_\infty)$-module by the action
$|\lambda\rangle\mapsto|\mathsf{e}_j\lambda\rangle, |\mathsf{f}_j\lambda\rangle$. This is our \emph{Fock space}.

We now define a $\g$-module structure on the space $\mathcal{F}$ by reduction. Let $h=2\ell+1$.
>From \cite[Table $2$]{DJKM82}, the action of the Chevalley generators $f_i, e_i$ of $\g$
on $\F$ are given as follows:
\begin{align} \label{Eq: Chevalley generators}
f_i = \sum_{j\ge0, \ j\equiv i, -i-1} \mathsf{f}_j,  \qquad
e_i = \left\{
        \begin{array}{ll}
          \mathsf{e}_0 + 2 \sum_{j>0, \  j\equiv 0, -1} \mathsf{e}_j & \hbox{ if } i=0, \\
          \sum_{j>0, \ j\equiv i, -i-1} \mathsf{e}_j & \hbox{ if } i = 1, \ldots, \ell,
        \end{array}
      \right.
\end{align}
where all congruences are taken modulo $h$. Each $|\lambda\rangle$ is a weight vector, and if we define a multiset $\res(\lambda)$ of $I$ by $\res(\lambda)=\{ \res(i,j) \mid (i,j)\in\lambda\}$, then its weight is given by
$$ \wt(\lambda) = \Lambda_0-\sum_{k\in\res(\lambda)} \alpha_k. $$

We remark that $\mathcal{F}$ is isomorphic to the direct sum of countably many copies of $V(\Lambda_0)$.

\section{Cyclotomic quiver Hecke algebras of type $A^{(2)}_{2\ell}$}  

Let $(A,P,P^{\vee},\Pi,\Pi^{\vee})$ be a Cartan datum associated with a symmetrizable Cartan matrix $A$.
In this section, we review results on categorification of integrable $U_q(\mathfrak{g}(A))$-modules and their crystals.
As was explained in the introduction, this categorification gives us a new family of self-injective algebras, namely
cyclotomic quiver Hecke algebras, and the algebras are main object of study in this paper.
Categorification of the basic integrable module specialized at $q=1$ combined with Lie theoretic treatment of the Fock space explained
in the previous section are key ingredients in discovering dimension formulas for finite quiver Hecke algebras on the one hand,
categorification of its crystal gives us the number of irreducible modules of cyclotomic quiver Hecke algebras and
their behavior under induction and restriction functors. For the latter, realization of the abstract crystal in an explicit combinatorial model
is required, and we use Young walls for the purpose. The results in this section will be used in a crucial manner in the proof of
our main theorems in later sections.

\subsection{Quiver Hecke algebras}

Throughout the paper, $\bR$ is an algebraically closed field, and algebras are unital associative  $\bR$-algebras.

Let $(\cmA, \wlP, \Pi, \Pi^{\vee})$ be the affine Cartan datum from Section \ref{Cartan datum}. We are going to define $\bR$-algebras which we call
\emph{finite quiver Hecke algebras of type $A^{(2)}_{2\ell}$}. To do this, we choose polynomials $\mathcal{Q}_{i,j}(u,v)\in\bR[u,v]$, for $i,j\in I$,
of the form
\begin{align*}
\mathcal{Q}_{i,j}(u,v) = \left\{
                 \begin{array}{ll}
                   \sum_{p(\alpha_i|\alpha_i)+q (\alpha_j|\alpha_j) + 2(\alpha_i|\alpha_j)=0} t_{i,j;p,q} u^pv^q & \hbox{if } i \ne j,\\
                   0 & \hbox{if } i=j,
                 \end{array}
               \right.
\end{align*}
where $t_{i,j;p,q} \in \bR$ are such that $t_{i,j;-a_{ij},0} \ne 0$ and $\mathcal{Q}_{i,j}(u,v) = \mathcal{Q}_{j,i}(v,u)$.
For example, if $\ell \ge 2$, $\mathcal{Q}_{i,j}(u,v)$ should have the following form:
$$
\mathcal{Q}_{i,j}(u,v) = \left\{
                 \begin{array}{ll}
                   t_{i,j; 0, 0} & \hbox{if } a_{ij}= a_{ji} = 0,\\
                   t_{i,j; 1,0}\ u + t_{i,j;0,1}\ v  & \hbox{if } a_{ij}= a_{ji} = -1,\\
                   t_{i,j; 2,0}\ u^2 + t_{i,j;0,1}\ v  & \hbox{if } a_{ij}=-2, a_{ji} = -1,\\
                   t_{i,j; 1,0}\ u + t_{i,j;0,2}\ v^2  & \hbox{if } a_{ij}=-1, a_{ji} = -2,\\
                   0 & \hbox{if } i=j,
                 \end{array}
               \right.
$$
where $t_{i,j; p,q} \ne 0$ and $ t_{i,j; p,q} = t_{j,i; q,p}$ for all $i,j,p,q$.

The symmetric group $\sg_n = \langle s_k \mid k =1, \ldots, n-1\rangle$ acts on $I^n$ by place permutations. Namely, the Coxeter generator $s_k$ acts on $I^n$ by
$$s_k(\nu_1,\dots,,\nu_k,\nu_{k+1},\dots,\nu_n)=(\nu_1,\dots,\nu_{k+1},\nu_k,\dots,\nu_n).$$

\begin{defn} \
Let $\Lambda \in \wlP^+$. The {\it cyclotomic quiver Hecke algebra} $R^{\Lambda}(n)$ associated with polynomials $(\mathcal{Q}_{i,j}(u,v))_{i,j\in I}$ and the dominant integral weight $\Lambda$
is the $\Z$-graded  $\bR$-algebra defined by three sets of generators
$$\{e(\nu) \mid \nu = (\nu_1,\ldots, \nu_n) \in I^n\}, \;\{x_k \mid 1 \le k \le n\}, \;\{\psi_l \mid 1 \le l \le n-1\} $$
subject to the following relations:

\begin{align*}
& e(\nu) e(\nu') = \delta_{\nu,\nu'} e(\nu),\ \sum_{\nu \in I^{n}} e(\nu)=1,\
x_k e(\nu) =  e(\nu) x_k, \  x_k x_l = x_l x_k,\\
& \psi_l e(\nu) = e(s_l(\nu)) \psi_l,\  \psi_k \psi_l = \psi_l \psi_k \text{ if } |k - l| > 1, \\[5pt]
&  \psi_k^2 e(\nu) = \mathcal{Q}_{\nu_k, \nu_{k+1}}(x_k, x_{k+1}) e(\nu), \\[5pt]
&  (\psi_k x_l - x_{s_k(l)} \psi_k ) e(\nu) = \left\{
                                                           \begin{array}{ll}
                                                             -  e(\nu) & \hbox{if } l=k \text{ and } \nu_k = \nu_{k+1}, \\
                                                               e(\nu) & \hbox{if } l = k+1 \text{ and } \nu_k = \nu_{k+1},  \\
                                                             0 & \hbox{otherwise,}
                                                           \end{array}
                                                         \right. \\[5pt]
&( \psi_{k+1} \psi_{k} \psi_{k+1} - \psi_{k} \psi_{k+1} \psi_{k} )  e(\nu) \\[4pt]
&\qquad \qquad \qquad = \left\{
                                                                                   \begin{array}{ll}
\displaystyle \frac{\mathcal{Q}_{\nu_k,\nu_{k+1}}(x_k,x_{k+1}) -
\mathcal{Q}_{\nu_k,\nu_{k+1}}(x_{k+2},x_{k+1})}{x_{k}-x_{k+2}} e(\nu) & \hbox{if } \nu_k = \nu_{k+2}, \\
0 & \hbox{otherwise}, \end{array}
\right.\\[5pt]
& x_1^{\langle h_{\nu_1}, \Lambda \rangle} e(\nu)=0.
\end{align*}
\end{defn}

\bigskip
The $\Z$-grading on $R^\Lambda(n)$ is given as follows:
\begin{align*}
\deg(e(\nu))=0, \quad \deg(x_k e(\nu))= ( \alpha_{\nu_k} |\alpha_{\nu_k}), \quad  \deg(\psi_l e(\nu))= -(\alpha_{\nu_{l}} | \alpha_{\nu_{l+1}}).
\end{align*}

\vskip 1em

The following statement was proved in a special case \cite[Lem.2.1]{BK09}. As we are not able to find a reference for general case, we add a proof, but it is straightforward.

\begin{lemma}
The algebra $R^\Lambda(n)$ is a finite dimensional algebra and $x_1,\dots,x_n$ are nilpotent.
\end{lemma}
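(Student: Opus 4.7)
The plan is to handle nilpotency of the $x_k$ first, and then read off finite-dimensionality from a Khovanov--Lauda--Rouquier style spanning result.

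The case $k=1$ is immediate from the cyclotomic relation: for every $\nu \in I^n$ one has $x_1^{\langle h_{\nu_1},\Lambda\rangle}e(\nu) = 0$, so setting $N_0 := \max_{i\in I}\langle h_i,\Lambda\rangle$ (a finite number since $I$ is finite) gives $x_1^{N_0} = \sum_\nu x_1^{N_0}e(\nu) = 0$.

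For $k \geq 2$ I would proceed by induction, using the intertwining relations
\[
\psi_k x_{k+1} e(\nu) \;=\; x_k \psi_k e(\nu) + \delta_{\nu_k,\nu_{k+1}}\,e(\nu),\qquad
\psi_k^2 e(\nu) \;=\; \mathcal{Q}_{\nu_k,\nu_{k+1}}(x_k,x_{k+1})\,e(\nu),
\]
to propagate nilpotency from position $k$ to position $k+1$. The idea is that when $\nu_k \neq \nu_{k+1}$ the intertwiner $\psi_k$ iterates to give $\psi_k x_{k+1}^M e(\nu) = x_k^M \psi_k e(\nu)$, so that multiplying by $\psi_k$ again and invoking $\psi_k^2 e(\nu) = \mathcal{Q}_{\nu_k,\nu_{k+1}}(x_k,x_{k+1})e(\nu)$ converts a large power of $x_{k+1}$ into an expression in $x_k, x_{k+1}$ of lower degree in $x_{k+1}$, modulo the ideal generated by powers of $x_k$. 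Iterating lets me bound $x_{k+1}^M e(\nu)$ by a polynomial in $x_k$ of bounded degree. When $\nu_k = \nu_{k+1}$ one has $\mathcal{Q}_{\nu_k,\nu_{k+1}} = 0$, so $\psi_k^2 e(\nu) = 0$, and the same iteration still works, modulo an extra inhomogeneous term $e(\nu)$ coming from the commutation relation; here I would move the inhomogeneous contribution to the other side and absorb it using the induction hypothesis. Combined with the place-permutation action of $\sg_n$ implemented by the $\psi$'s, which allows one to transport any $\nu_{k+1}$ into first position via $\psi_k \psi_{k-1}\cdots \psi_1$, this shows that a sufficiently large power of $x_{k+1}$ annihilates every $e(\nu)$.

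For finite-dimensionality I would invoke the standard spanning statement for the non-cyclotomic quiver Hecke algebra $R(n)$, namely that $R(n)$ is spanned (in fact freely, though we only need spanning) by monomials $\psi_w\, x_1^{a_1}\cdots x_n^{a_n}\, e(\nu)$ with $w\in\sg_n$, $a\in\Z_{\geq 0}^n$, $\nu\in I^n$. This spanning passes to the quotient $R^\Lambda(n)$. Since $\sg_n$ and $I^n$ are finite, and since by the nilpotency established above only finitely many exponents $a$ are needed (any $a_i$ larger than the nilpotency index of $x_i$ yields zero), the resulting spanning set is finite.

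The main obstacle is the inductive step for nilpotency of $x_{k+1}$, in particular the diagonal case $\nu_k = \nu_{k+1}$: here $\psi_k^2 e(\nu) = 0$, so $\psi_k$ is not invertible in any reasonable sense and one must track how the inhomogeneous $\delta_{\nu_k,\nu_{k+1}}e(\nu)$ correction terms accumulate through the iteration. The bookkeeping is elementary but requires a careful induction on the degree in $x_{k+1}$ together with the induction on $k$, and this is presumably why the authors say the proof is straightforward but add it for completeness.
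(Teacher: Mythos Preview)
Your approach is workable but takes the opposite route from the paper. You argue nilpotency first (by induction on $k$, propagating through the $\psi_k$ relations) and then deduce finite-dimensionality via the standard KLR spanning set. The paper does the reverse: it invokes \cite[Cor.~4.4]{KK11} (a consequence of the Kang--Kashiwara categorification machinery) to obtain finite-dimensionality outright, and then observes that in a finite-dimensional $\Z$-graded algebra every homogeneous element of strictly positive degree is nilpotent, since irreducible modules are gradable and such an element shifts degree on each composition factor. Each $x_k e(\nu)$ has degree $(\alpha_{\nu_k}\mid\alpha_{\nu_k})>0$, so nilpotency is a one-line consequence. This avoids all of the inductive bookkeeping.

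Your direct route has the advantage of not relying on \cite{KK11}, but the diagonal case $\nu_k=\nu_{k+1}$ is handled too optimistically. There ``the same iteration'' does not work: since $\psi_k^2 e(\nu)=0$, left-multiplying the iterated commutation identity by $\psi_k$ yields only $0=0$, and the inhomogeneous correction $\sum_{a+b=M-1} x_k^a x_{k+1}^b\, e(\nu)$ you propose to absorb still carries $x_{k+1}$ to exponent $M-1$, so no reduction in the $x_{k+1}$-degree is achieved. One clean way to close the gap is to use the intertwiner $\tau_k=\psi_k(x_k-x_{k+1})+1$: on $e(\nu)$ with $\nu_k=\nu_{k+1}$ one checks $\tau_k^2 e(\nu)=e(\nu)$ and $\tau_k x_{k+1} e(\nu)=x_k\tau_k e(\nu)$, so conjugation by $\tau_k$ inside $e(\nu)R^\Lambda(n)e(\nu)$ transports $x_k^N e(\nu)=0$ directly to $x_{k+1}^N e(\nu)=0$. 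With this fix your argument goes through, but as written the diagonal step is a genuine gap rather than routine bookkeeping.
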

\begin{proof}
It is proved in \cite[Cor.4.4]{KK11} that $R^\Lambda(n)$ is a finite dimensional algebra. Thus, we prove that $x_1,\dots,x_n$ are nilpotent elements. Note that, by general theory of graded Artin algebras,
irreducible $R^\Lambda(n)$-modules are gradable, and it implies that any element of positive degree acts nilpotently on finite length modules. 
\end{proof}

\vskip 1em

For $\beta \in \mathsf{Q}^+$ with $|\beta|=n$, let
$$ I^\beta = \{ \nu=(\nu_1, \ldots, \nu_n) \in I^n \mid \alpha_{\nu_1} + \cdots + \alpha_{\nu_n} = \beta \}.$$
The symmetric group action preserves $I^\beta$, so that  $$ e(\beta) = \sum_{\nu \in I^\beta} e(\nu)$$ is a central idempotent of $R^\Lambda(n)$.
We define $$R^\Lambda(\beta) = R^\Lambda(n) e(\beta).$$
The following algebras are the object of study in this paper. We do not know whether $R^\Lambda(\beta)$ is an indecomposable $\bR$-algebra, for every $\beta$.

\begin{defn}
We call the algebras $\KLR(\beta)$ \emph{finite quiver Hecke algebras of type $A^{(2)}_{2\ell}$}.
\end{defn}

\vskip 1em

\subsection{Categorification of integrable modules} \label{Subsection: integrable modules}

Let $\g=\g(\cmA)$ be the affine Kac-Moody Lie algebra associated with the Cartan matrix $\cmA$. Let $q$ be an indeterminate and we denote the corresponding quantum affine algebra by $U_q(\g)$. It is an  $\Q(q)$-algebra defined by generators $e_i,f_i$ $(i \in I)$, $q^{h}$ $(h \in \mathsf{P}^{\vee})$ and their relations.


For $i\in I$, let $q_i = q^{d_i}$ and
\begin{equation*}
 \begin{aligned}
 \ &[n]_i =\frac{ q^n_{i} - q^{-n}_{i} }{ q_{i} - q^{-1}_{i} },
 \ &[n]_i! = \prod^{n}_{k=1} [k]_i.
 \end{aligned}
\end{equation*}

For $\A =\Z[q,q^{-1}]$, we consider the $\A$-subalgebra of $U_q(\g)$ generated by $f_i^{(k)} := f_i^{k}/[k]_i! $, for $i\in I$ and $k\in\Z_{\ge0}$. We
denote it by $U_{\A}^-(\g)$.

Let  $\Lambda \in \wlP^+$ be a dominant integral weight. Then we may consider the irreducible highest weight $U_q(\g)$-module $V_q(\Lambda)$ with highest weight $\Lambda$
and its $\A$-form $V_\A(\Lambda)$, which is the $U_{\A}^-(\g)$-submodule of $V_q(\Lambda)$ generated by a fixed highest weight vector. If we specialize $V_\A(\Lambda)$ at $q=1$, we have
the irreducible highest weight $\g$-module $V(\Lambda)$.

We denote the direct sum of split Grothendieck groups of additive categories $R^\Lambda(\beta)\text{-}\proj^\Z$ of finitely generated projective
graded left $R^\Lambda(\beta)$-modules, for $\beta\in \rlQ^+$, by
\begin{align*}
K^\Z_0(R^{\Lambda})=\bigoplus_{\beta \in \rlQ^+} K_0(R^{\Lambda}(\beta)\text{-}\proj^\Z).
\end{align*}
It has an $\A$-module structure induced by the $\Z$-grading on $R^\Lambda(n)$.

Let $e(\nu,\nu')$ be the idempotent corresponding to the concatenation $\nu*\nu'$ of
$\nu$ and $\nu'$. If $\nu'=i$, we write $e(\nu,i)$.
For $\beta\in \rlQ^+$ and $i\in I$, we set
$$e(\beta, i) = \sum_{\nu \in I^\beta} e(\nu, i)$$
and define functors
\begin{align*}
E_i &: R^\Lambda(\beta + \alpha_i)\text{-mod}^\Z \longrightarrow R^\Lambda(\beta)\text{-mod}^\Z, \\
F_i &: R^\Lambda(\beta )\text{-mod}^\Z \longrightarrow R^\Lambda(\beta+ \alpha_i)\text{-mod}^\Z,
\end{align*}
between categories $R^\Lambda(\beta)\text{-mod}^\Z$ and
$R^\Lambda(\beta + \alpha_i)\text{-mod}^\Z$ of finitely generated graded modules
by $E_i(N)  = e(\beta,i)N$ and $F_i(M) = R^\Lambda(\beta+ \alpha_i) e(\beta,i) \otimes_{R^\Lambda(\beta)}M$, for $M \in R^\Lambda(\beta)\text{-mod}^\Z$
and $N \in R^\Lambda(\beta + \alpha_i)\text{-mod}^\Z$, respectively. Then, $E_i$ and $F_i$ are exact functors \cite[Thm.4.5]{KK11} which send projective modules to projective modules, and the following theorem holds.

\begin{thm} [\protect{\cite[Thm.5.2]{KK11}}]  \label{Thm: categorification}
Set $l_i = \langle h_i, \Lambda - \beta  \rangle$, for $i\in I$. Then one of the following  isomorphisms of endofuctors on
the category of finitely generated graded $R^{\Lambda}(\beta)$-modules holds.
\begin{enumerate}
\item If $l_i \ge 0$, then
$$ q_i^{-2}F_i E_i \oplus \bigoplus_{k=0}^{l_i- 1} q_i^{2k} \mathrm{id} \buildrel \sim \over \longrightarrow E_iF_i .$$
\item If $l_i \le 0$, then
$$ q_i^{-2}F_i E_i  \buildrel \sim \over \longrightarrow  E_iF_i \oplus \bigoplus_{k=0}^{-l_i- 1} q_i^{-2k-2}  \mathrm{id} .$$
\end{enumerate}
\end{thm}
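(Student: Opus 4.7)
The plan is to adapt the strategy of Kang and Kashiwara: construct a short exact sequence of endofunctors that categorifies the $\mathfrak{sl}_2$ commutation $[e_i, f_i] = (q_i^{l_i} - q_i^{-l_i})/(q_i - q_i^{-1})$, and then split it. First I would work in the ambient (non-cyclotomic) quiver Hecke algebra $R(\beta)$, where the bimodules $R(\beta + \alpha_i)\, e(\beta, i)$ and $e(\beta - \alpha_i, i)\, R(\beta)$ admit explicit presentations in terms of the $\psi$ generators and the idempotents. Using these presentations together with the quadratic relation $\psi_k^2 e(\nu) = \mathcal{Q}_{\nu_k, \nu_{k+1}}(x_k, x_{k+1})\, e(\nu)$, one obtains a Mackey-type short exact sequence
\[
0 \longrightarrow q_i^{-2} F_i E_i \longrightarrow E_i F_i \longrightarrow \mathrm{id} \otimes_{\bR} \bR[t] \longrightarrow 0
\]
of endofunctors on $R(\beta)\text{-mod}^\Z$, where $t$ is a formal variable of degree $(\alpha_i|\alpha_i) = 2 d_i$ tracking left-multiplication by $x_1$ on the newly created rightmost strand.

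Next I would descend this sequence to the cyclotomic quotient $\KLR(\beta)$. The key input is that on $\KLR$-modules the rightmost $x_1$ acts with nilpotency controlled by $l_i = \langle h_i, \Lambda - \beta\rangle$, not merely by $\langle h_i, \Lambda\rangle$: each time one inducts and restricts one effectively lowers the exponent in $x_1^{\langle h_i, \Lambda\rangle}$ by the amount $\langle h_i, \beta\rangle$ accumulated so far. Consequently the polynomial cokernel $\bR[t]$ descends to the truncation $\bR[t]/(t^{l_i})$ when $l_i \ge 0$, and a graded dimension count identifies this with $\bigoplus_{k=0}^{l_i-1} q_i^{2k}\, \bR$. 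When $l_i \le 0$ the analogous analysis applied on the other side, or obtained by dualizing through the Frobenius structure established in the process, yields a short exact sequence running the other way with cokernel $\bigoplus_{k=0}^{-l_i-1} q_i^{-2k-2}\, \mathrm{id}$.

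Finally I would split the exact sequence. Since $E_i$ and $F_i$ are exact and preserve projectives, applying the sequence to a projective module produces a short exact sequence whose third term is a direct sum of graded shifts of that module; because $\KLR(\beta)$ is self-injective (this again follows from the biadjoint structure of $(F_i, E_i)$ up to grading shift), the sequence splits on projectives. Exactness and the fact that every module admits a projective presentation then upgrade the splitting to arbitrary finitely generated graded modules.

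The hard part will be the second step: making precise the interaction between the cyclotomic relation $x_1^{\langle h_{\nu_1}, \Lambda\rangle} e(\nu) = 0$ and the bimodule presentations, so that the polynomial cokernel is truncated at the correct length $l_i$. Equivalently, one must show that $E_i$ with the grading shift $q_i^{2 l_i - 2}$ is right adjoint to $F_i$ on $\KLR(\beta)\text{-mod}^\Z$; this is the central, nontrivial content of \cite{KK11} and \cite{Kash11}, and once that biadjointness is in hand the claimed splitting of endofunctors follows formally.
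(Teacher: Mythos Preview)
The paper does not supply its own proof of this theorem: it is quoted verbatim as \cite[Thm.~5.2]{KK11}, and everything in Sections~3 and~4 uses it as a black box. So there is nothing in the paper to compare your argument against; what you have written is a sketch of the Kang--Kashiwara proof itself, and in broad outline it is the right sketch.

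Two remarks on the details. First, you repeatedly write $x_1$ where you mean the last variable $x_{n+1}$ (with $n=|\beta|$): the functor $F_i$ adds a strand on the \emph{right}, so the polynomial cokernel in the Mackey sequence is governed by multiplication by $x_{n+1}$, not $x_1$. The cyclotomic relation kills a power of $x_1$, and the genuinely hard step in \cite{KK11} is precisely to transport that relation across to control the nilpotency degree of $x_{n+1}$; this is done by an intricate induction on $|\beta|$ and is what yields the truncation at $l_i$. Second, Kang--Kashiwara do not obtain the splitting via self-injectivity of $R^\Lambda(\beta)$; rather, they first prove directly that $F_i$ sends finitely generated projectives to finitely generated projectives (\cite[Thm.~4.5]{KK11}), and the isomorphism of functors is established by explicit bimodule computations. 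Your route through self-injectivity is not circular in this paper's logic (Proposition~\ref{self-injectivity} uses only biadjointness), but it is not how the original argument runs.
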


Thus, functors $ q_i^{1-\langle h_i, \Lambda - \beta \rangle} E_i$ and $F_i$, for $\beta\in \rlQ^+$ and $i\in I$, define a $U_{\A}(\g)$-module structure on $K^\Z_{0}(R^\Lambda)$.

The following theorem is the cyclotomic categorification theorem conjectured by Khovanov and Lauda, and proved by Kang and Kashiwara.

\begin{thm}[\protect{\cite[Thm.6.2]{KK11}}] \label{Thm: categorification V}
There exists a $U_\A(\g)$-module isomorphism between $K^\Z_0(R^{\Lambda})$ and $V_\A(\Lambda)$.
\end{thm}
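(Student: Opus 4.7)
The plan is to construct a $U_\A(\g)$-module homomorphism $\phi\colon V_\A(\Lambda) \to K^\Z_0(R^\Lambda)$ by sending the highest weight generator $v_\Lambda$ to $[R^\Lambda(0)] = [\bR]$, and then to establish bijectivity. First I would equip $K^\Z_0(R^\Lambda)$ with a $U_\A(\g)$-action, using the weight decomposition $K_0(R^\Lambda(\beta)\text{-}\proj^\Z) \mapsto \Lambda - \beta$ together with the Chevalley generators $[F_i]$ and $q_i^{1-\langle h_i,\Lambda-\beta\rangle}[E_i]$. The key relation $[e_i,f_i] = [h_i]_i$ in divided-power form is exactly Theorem~\ref{Thm: categorification}. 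The quantum Serre relations on the Chevalley generators descend from the braid-like and quadratic KLR defining relations $\psi_k^2 e(\nu) = \mathcal{Q}_{\nu_k,\nu_{k+1}}(x_k,x_{k+1}) e(\nu)$ after passing to Grothendieck classes.

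Next I would check that $[\bR]$ is a highest weight vector of weight $\Lambda$, so that $\phi$ descends from the Verma module to $V_\A(\Lambda)$. The weight is correct by construction, and $E_i[\bR] = 0$ is trivial since $R^\Lambda(-\alpha_i) = 0$. For the cyclotomic vanishing $f_i^{\langle h_i,\Lambda\rangle+1} v_\Lambda = 0$, I would invoke the defining relation $x_1^{\langle h_i,\Lambda\rangle} e(i) = 0$: this collapses $e(i)R^\Lambda(\alpha_i)e(i)$ to $\bR[x_1]/(x_1^{\langle h_i,\Lambda\rangle})$, and iterating together with Theorem~\ref{Thm: categorification} forces $F_i^{\langle h_i,\Lambda\rangle+1}[\bR]$ to vanish in $K^\Z_0$.

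For surjectivity, note that the classes $\{[R^\Lambda(\beta)e(\nu)] : \nu \in I^\beta\}$ generate $K_0(R^\Lambda(\beta)\text{-}\proj^\Z)$ as a graded group, and each equals $[F_{\nu_n}\cdots F_{\nu_1}\bR]$ up to normalization, hence lies in the image of $\phi$. Injectivity follows by extension of scalars to $\Q(q)$: the module $V_q(\Lambda) = V_\A(\Lambda)\otimes_\A \Q(q)$ is irreducible over $U_q(\g)$, so the nonzero map $\phi\otimes\Q(q)$ must be injective; since the split Grothendieck group of the Krull--Schmidt category $R^\Lambda(\beta)\text{-}\proj^\Z$ is a free $\A$-module, injectivity descends back to $\A$.

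The chief obstacle is Theorem~\ref{Thm: categorification} itself, which is granted here but whose proof is the technical heart. One constructs a natural transformation $q_i^{-2} F_i E_i \to E_i F_i$ out of the inclusion $R^\Lambda(\beta) \hookrightarrow e(\beta,i) R^\Lambda(\beta+\alpha_i) e(\beta,i)$, and must identify its kernel or cokernel as a direct sum of copies of the identity functor with multiplicity exactly $|\langle h_i,\Lambda-\beta\rangle|$. This identification is delicate: it requires a filtration by powers of $x_{n+1}$, an induction on $|\beta|$, and essential use of the cyclotomic relation to guarantee that the high-order terms in $x_{n+1}$ eventually vanish. Once this is granted, the remainder of the argument is structural.
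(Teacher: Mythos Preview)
The paper does not give a proof of this theorem: it is quoted verbatim from \cite[Thm.~6.2]{KK11} and used as a black box, so there is no ``paper's own proof'' to compare against.  Your proposal is a reasonable outline of the Kang--Kashiwara argument itself, and you correctly identify Theorem~\ref{Thm: categorification} as the technical heart; the structural steps (highest weight vector, surjectivity via $[R^\Lambda(\beta)e(\nu)] = [F_{\nu_n}\cdots F_{\nu_1}\bR]$, injectivity after extending scalars to $\Q(q)$) are the standard ones.

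Two small points of caution in your sketch.  First, the quantum Serre relations for $[E_i],[F_i]$ on the Grothendieck group are not read off directly from the quadratic and braid relations of the KLR algebra in the way you suggest; rather, they follow because the quiver Hecke algebra $R(n)$ (before the cyclotomic quotient) already categorifies $U_\A^-(\g)$, so the divided powers $f_i^{(k)}$ are encoded by the nilHecke idempotents and the Serre relations hold at the level of the $2$-category.  Second, for injectivity one needs to know that $V_\A(\Lambda)$ is a free $\A$-module, which is an input from the theory of crystal/canonical bases and not automatic from irreducibility of $V_q(\Lambda)$; without this, torsion in $V_\A(\Lambda)$ could sit in the kernel even when $\phi\otimes\Q(q)$ is injective.
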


In the study of representation type, we do not need grading. So we specialize $q\to1$. Let $R^{\Lambda}(\beta)\text{-}\proj$ be
the category of finitely generated projective $R^{\Lambda}(\beta)$-modules, and define
\begin{align*}
K_0(R^{\Lambda})=\bigoplus_{\beta \in \rlQ^+} K_0(R^{\Lambda}(\beta)\text{-}\proj).
\end{align*}
Then we have an isomorphism of $\g_\Z$-modules $K_0(R^{\Lambda})\cong V(\Lambda)_\Z$, where $\g_\Z$ and $V(\Lambda)_\Z$ are the Kostant $\Z$-form of $\g$ and $V(\Lambda)$, respectively.

\subsection{Categorification of crystals}\label{Subsection: crystals}

We now recall a combinatorial realization of the crystal of $V_q(\Lambda_0)$ by {\it Young walls} \cite{HK02, Kang03}.
A Young wall is a generalization of a colored Young diagram based on certain level 1 perfect crystal, which gives a combinatorial realization of crystals for basic representations of various quantum affine algebras. This realization will be used in Proposition \ref{Prop: the number of simples} for counting irreducible modules over finite quiver Hecke algebras. We refer the reader to \cite{HK02, Kang03, Kash91, Kash93} for the details. For type $A^{(2)}_{2\ell}$, we consider the following blocks:
\begin{align*}
\xy
(0,1.5)*{};(6,1.5)*{} **\dir{-};
(0,-1.5)*{};(6,-1.5)*{} **\dir{-};
(0,-1.5)*{};(0,1.5)*{} **\dir{-};
(6,-1.5)*{};(6,1.5)*{} **\dir{-};
(3.2,0)*{ _0};
(50,0)*{ : \text{ unit width and half-unit height, unit thickness}};
(0,-4)*{};(6,-4)*{} **\dir{-};
(0,-10)*{};(6,-10)*{} **\dir{-};
(0,-4)*{};(0,-10)*{} **\dir{-};
(6,-4)*{};(6,-10)*{} **\dir{-};
(3.2,-7)*{ _i};
(55,-7)*{ (i=1,\ldots, \ell): \text{ unit width and unit height, unit thickness}};
\endxy
\end{align*}
and define a {\it Young wall} to be a wall consisting of the above colored blocks stacked by the following rules. We write (2) for completeness, but it is a vacant condition, as we do not have blocks of half-unit thickness.
\begin{enumerate}
\item Colored blocks should be stacked in the pattern given below.
\item No block can be placed on top of a column of half-unit thickness.
\item Except for the rightmost column, there should be no free space to the right of any block.
\end{enumerate}
The pattern is given as follows:
$$
\xy
(0,-0.5)*{};(33,-0.5)*{} **\dir{.};
(0,-1)*{};(33,-1)*{} **\dir{.};
(0,-1.5)*{};(33,-1.5)*{} **\dir{.};
(0,-2)*{};(33,-2)*{} **\dir{.};
(0,-2.5)*{};(33,-2.5)*{} **\dir{.};
(0,-3)*{};(33,-3)*{} **\dir{-};
(0,0)*{};(33,0)*{} **\dir{-};
(0,3)*{};(33,3)*{} **\dir{-};
(0,9)*{};(33,9)*{} **\dir{-};
(0,19)*{};(33,19)*{} **\dir{-};
(0,25)*{};(33,25)*{} **\dir{-};
(0,35)*{};(33,35)*{} **\dir{-};
(0,41)*{};(33,41)*{} **\dir{-};
(0,44)*{};(33,44)*{} **\dir{-};
(0,47)*{};(33,47)*{} **\dir{-};
(0,53)*{};(33,53)*{} **\dir{-};
(33,-3)*{};(33,56)*{} **\dir{-};
(27,-3)*{};(27,56)*{} **\dir{-};
(21,-3)*{};(21,56)*{} **\dir{-};
(15,-3)*{};(15,56)*{} **\dir{-};
(9,-3)*{};(9,56)*{} **\dir{-};
(3,-3)*{};(3,56)*{} **\dir{-};
(30.2,1.5)*{_0}; (30.2,6)*{_1}; (30.2,15)*{\vdots}; (30.2,22)*{_\ell}; (30.2,31)*{\vdots};
(30.2,38)*{_1}; (30.2,42.5)*{_0}; (30.2,45.5)*{_0}; (30.2,50)*{_1};
(24.2,1.5)*{_0}; (24.2,6)*{_1}; (24.2,15)*{\vdots}; (24.2,22)*{_\ell}; (24.2,31)*{\vdots};
(24.2,38)*{_1}; (24.2,42.5)*{_0}; (24.2,45.5)*{_0}; (24.2,50)*{_1};
(18.2,1.5)*{_0}; (18.2,6)*{_1}; (18.2,15)*{\vdots}; (18.2,22)*{_\ell}; (18.2,31)*{\vdots};
(18.2,38)*{_1}; (18.2,42.5)*{_0}; (18.2,45.5)*{_0}; (18.2,50)*{_1};
(12.2,1.5)*{_0}; (12.2,6)*{_1}; (12.2,15)*{\vdots}; (12.2,22)*{_\ell}; (12.2,31)*{\vdots};
(12.2,38)*{_1}; (12.2,42.5)*{_0}; (12.2,45.5)*{_0}; (12.2,50)*{_1};
(6.2,1.5)*{_0}; (6.2,6)*{_1}; (6.2,15)*{\vdots}; (6.2,22)*{_\ell}; (6.2,31)*{\vdots};
(6.2,38)*{_1}; (6.2,42.5)*{_0}; (6.2,45.5)*{_0}; (6.2,50)*{_1};
(35,-3)*{.};
\endxy
$$
The sequence $(0,1,2, \ldots, \ell-1, \ell, \ell-1, \ldots, 2,1,0 )$ of colors is repeated in each column.
For a Young wall $Y$, define
$$ \wt(Y) = \Lambda_0 - \sum_{i\in I} k_i \alpha_i,  $$
where $k_i$ is the number of $i$-blocks in $Y$, for $i\in I$.

A column is called a {\it full column} if its height is a multiple of the unit length and its top is of unit thickness.
As the ground state blocks have half-unit height, a column is not full if and only if the number of stacked blocks is divisible by $|\delta|=2\ell+1$.
A Young wall is said to be {\it proper} if none of the full columns have the same height.
A part of a column consisting of two $0$-blocks, two $1$-blocks, \ldots, two $(\ell-1)$-blocks and one $\ell$-block
is called a $\delta$-column. A column in a proper Young wall is said to contain a {\it removable} $\delta$ if we may remove
a $\delta$-column from $Y$ and still obtain a proper Young wall. For example, when $A $ is of type $ A_{4}^{(2)}$, i.e. $\ell=2$, the following are Young walls.
$$
\xy
(0,0)*{};(12,0)*{} **\dir{-};   
(0,3)*{};(12,3)*{} **\dir{-};
(0,6)*{};(12,6)*{} **\dir{-};
(0,0)*{};(0,6)*{} **\dir{-};
(6,0)*{};(6,6)*{} **\dir{-};
(12,0)*{};(12,6)*{} **\dir{-};
(0,0.5)*{};(12,0.5)*{} **\dir{.};
(0,1)*{};(12,1)*{} **\dir{.};
(0,1.5)*{};(12,1.5)*{} **\dir{.};
(0,2)*{};(12,2)*{} **\dir{.};
(0,2.5)*{};(12,2.5)*{} **\dir{.};
(3.2,4.5)*{_0}; (9.2,4.5)*{_0}; (6,-4)*{Y_1};
(22,0)*{};(40,0)*{} **\dir{-};   
(22,3)*{};(40,3)*{} **\dir{-};
(22,6)*{};(40,6)*{} **\dir{-};
(28,12)*{};(40,12)*{} **\dir{-};
(28,18)*{};(40,18)*{} **\dir{-};
(22,0)*{};(22,6)*{} **\dir{-};
(28,0)*{};(28,18)*{} **\dir{-};
(34,0)*{};(34,18)*{} **\dir{-};
(40,0)*{};(40,18)*{} **\dir{-};
(22,0.5)*{};(40,0.5)*{} **\dir{.};
(22,1)*{};(40,1)*{} **\dir{.};
(22,1.5)*{};(40,1.5)*{} **\dir{.};
(22,2)*{};(40,2)*{} **\dir{.};
(22,2.5)*{};(40,2.5)*{} **\dir{.};
(25.2,4.5)*{_0}; (31.2,4.5)*{_0}; (37.2,4.5)*{_0};
(31.2,9)*{_1}; (37.2,9)*{_1};
(31.2,15)*{_2}; (37.2,15)*{_2}; (31,-4)*{Y_2};
(50,0)*{};(62,0)*{} **\dir{-};   
(50,3)*{};(62,3)*{} **\dir{-};
(50,6)*{};(62,6)*{} **\dir{-};
(50,12)*{};(62,12)*{} **\dir{-};
(50,18)*{};(62,18)*{} **\dir{-};
(50,24)*{};(62,24)*{} **\dir{-};
(50,27)*{};(62,27)*{} **\dir{-};
(56,30)*{};(62,30)*{} **\dir{-};
(56,36)*{};(62,36)*{} **\dir{-};
(50,0)*{};(50,27)*{} **\dir{-};
(56,0)*{};(56,36)*{} **\dir{-};
(62,0)*{};(62,36)*{} **\dir{-};
(50,0.5)*{};(62,0.5)*{} **\dir{.};
(50,1)*{};(62,1)*{} **\dir{.};
(50,1.5)*{};(62,1.5)*{} **\dir{.};
(50,2)*{};(62,2)*{} **\dir{.};
(50,2.5)*{};(62,2.5)*{} **\dir{.};
(53.2,4.5)*{_0}; (59.2,4.5)*{_0};
(53.2,9)*{_1}; (59.2,9)*{_1};
(53.2,15)*{_2}; (59.2,15)*{_2};
(53.2,21)*{_1}; (59.2,21)*{_1};
(53.2,25.5)*{_0}; (59.2,25.5)*{_0};
(59.2,28.5)*{_0};
(59.2,33)*{_1};                   (56,-4)*{Y_3};
(72,0)*{};(90,0)*{} **\dir{-};   
(72,3)*{};(90,3)*{} **\dir{-};
(72,6)*{};(90,6)*{} **\dir{-};
(78,12)*{};(90,12)*{} **\dir{-};
(78,18)*{};(90,18)*{} **\dir{-};
(78,24)*{};(90,24)*{} **\dir{-};
(78,27)*{};(90,27)*{} **\dir{-};
(72,0)*{};(72,6)*{} **\dir{-};
(78,0)*{};(78,27)*{} **\dir{-};
(84,0)*{};(84,27)*{} **\dir{-};
(90,0)*{};(90,27)*{} **\dir{-};
(72,0.5)*{};(90,0.5)*{} **\dir{.};
(72,1)*{};(90,1)*{} **\dir{.};
(72,1.5)*{};(90,1.5)*{} **\dir{.};
(72,2)*{};(90,2)*{} **\dir{.};
(72,2.5)*{};(90,2.5)*{} **\dir{.};
(75.2,4.5)*{_0}; (81.2,4.5)*{_0}; (87.2,4.5)*{_0};
(81.2,9)*{_1}; (87.2,9)*{_1};
(81.2,15)*{_2}; (87.2,15)*{_2};
(81.2,21)*{_1}; (87.2,21)*{_1};
(81.2,25.5)*{_0}; (87.2,25.5)*{_0};    (81,-4)*{Y_4};
(100,0)*{};(112,0)*{} **\dir{-};   
(100,3)*{};(112,3)*{} **\dir{-};
(100,6)*{};(112,6)*{} **\dir{-};
(106,12)*{};(112,12)*{} **\dir{-};
(106,18)*{};(112,18)*{} **\dir{-};
(106,24)*{};(112,24)*{} **\dir{-};
(106,27)*{};(112,27)*{} **\dir{-};
(106,30)*{};(112,30)*{} **\dir{-};
(100,0)*{};(100,6)*{} **\dir{-};
(106,0)*{};(106,30)*{} **\dir{-};
(112,0)*{};(112,30)*{} **\dir{-};
(100,0.5)*{};(112,0.5)*{} **\dir{.};
(100,1)*{};(112,1)*{} **\dir{.};
(100,1.5)*{};(112,1.5)*{} **\dir{.};
(100,2)*{};(112,2)*{} **\dir{.};
(100,2.5)*{};(112,2.5)*{} **\dir{.};
(103.2,4.5)*{_0}; (109.2,4.5)*{_0};
 (109.2,9)*{_1};
 (109.2,15)*{_2};
 (109.2,21)*{_1};
 (109.2,25.5)*{_0};
 (109.2,28.5)*{_0};    (106,-4)*{Y_5};
\endxy
$$
By definition, we have
\begin{align*}
\wt(Y_1) &= \Lambda_0 - 2\alpha_0, \hskip 6.7em   \wt(Y_2) = \Lambda_0 - 3\alpha_0 - 2\alpha_1 - 2\alpha_2,  \\
\wt(Y_3) & = \Lambda_0 - 5\alpha_0 - 5\alpha_1 - 2\alpha_2, \quad  \wt(Y_4) = \Lambda_0 - 5\alpha_0 - 4\alpha_1 - 2\alpha_2, \\
\wt(Y_5) &= \Lambda_0 - 4\alpha_0 - 2\alpha_1 - \alpha_2.
\end{align*}
$Y_1$ and $Y_2$ are not proper since both have two full columns with the same height. $Y_3$ is proper but it has a removable $\delta$
because a proper Young wall can be obtained by removing $\delta$-column from the shorter column of $Y_3$. $Y_4$ has two columns with the same heights
but the height is not multiple of unit length. Thus, $Y_4$ is proper and has no removable $\delta$.
$Y_5$ is also proper and it has no removable $\delta$.

Let $\mathcal{Y}(\Lambda_0)$ be the set of all proper Young walls $Y$ such that none of the columns contain a removable $\delta$. Then,
Kashiwara operators $\tilde{e}_i$ and $\tilde{f}_i$ on $\mathcal{Y}(\Lambda_0)$ can be defined by using the combinatorics of Young walls, and
$\mathcal{Y}(\Lambda_0)$ has a $U_q(\g)$-crystal structure \cite{HK02, Kang03}.
\begin{thm} \cite[Thm. 7.1]{Kang03} \label{Thm: Young walls}
The crysal $\mathcal{Y}(\Lambda_0)$ is isomorphic to the crystal $B(\Lambda_0)$ of the highest weight $U_q(\g)$-module $V_q(\Lambda_0)$.
\end{thm}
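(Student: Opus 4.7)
The plan is to identify $\mathcal{Y}(\Lambda_0)$ with the realization of $B(\Lambda_0)$ obtained from the Kyoto path construction of Kang--Kashiwara--Misra--Miwa--Nakashima--Nakayashiki for quantum affine algebras. For type $A^{(2)}_{2\ell}$ one fixes the level one perfect crystal $B$ whose elements are the admissible single columns over the ground state; a level one ground state path $\mathbf{p}_{\Lambda_0} = \cdots \otimes b_0 \otimes b_0 \otimes b_0$ then exists, and the general theorem identifies $B(\Lambda_0)$ with the set of paths $\mathbf{p}$ that coincide with $\mathbf{p}_{\Lambda_0}$ in all but finitely many tensor factors, equipped with the tensor product crystal structure.

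First I would set up an explicit bijection $\Psi:\mathcal{Y}(\Lambda_0)\longrightarrow \mathcal{P}(\Lambda_0)$ from proper Young walls with no removable $\delta$ to such paths. Given $Y \in \mathcal{Y}(\Lambda_0)$, read the columns of $Y$ from right to left; each column (being a stack of blocks from the prescribed pattern, truncated at the ground state) is a well-defined element of $B$, and since only finitely many columns differ from the empty/ground-state column, $\Psi(Y)$ is a path. The admissibility conditions in the definition of a Young wall correspond exactly to the $b_{k+1}\otimes b_k$ being admissible in the sense of perfect crystals. Conversely, a path with only finitely many non-ground columns reassembles uniquely into a proper wall. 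The \emph{no removable $\delta$} condition is the combinatorial translation of belonging to the connected component of the ground state path in the (semi-)infinite tensor product, which is the standard way to pick out $B(\Lambda_0)$ from $\cdots\otimes B\otimes B$.

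Next I would check that $\Psi$ is weight-preserving: the formula $\wt(Y)=\Lambda_0-\sum_i k_i\alpha_i$ with $k_i$ the number of $i$-blocks matches $\wt(\mathbf{p})=\Lambda_0+\sum_k(\wt(b_k)-\wt(b_0))$ coming from the tensor product weight, because the $\delta$-column has weight $-\delta$ and the vertical periodicity of the coloring pattern is exactly the residue sequence $\nu_\delta$. For the Kashiwara operators, the tensor product rule computes $\tilde e_i$ and $\tilde f_i$ on a path by a signature of $+/-$'s obtained from the $\varepsilon_i/\varphi_i$-values of consecutive factors; I would verify that, once translated through $\Psi$, this signature rule prescribes exactly the same box which Kang's Young-wall operators add or remove (always an $i$-colored block at the unique addable/removable slot determined by reading the columns from right to left and cancelling matched pairs). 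This is the main obstacle, as it requires a careful bookkeeping of $\varepsilon_i$, $\varphi_i$ values for each column and checking that the admissibility rules forbid precisely the configurations which would produce an inadmissible result.

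Finally, with $\Psi$ exhibited as a weight-preserving bijection intertwining $\tilde e_i,\tilde f_i$ on both sides, the Kyoto path realization of $B(\Lambda_0)$ implies $\mathcal{Y}(\Lambda_0)\cong B(\Lambda_0)$ as $U_q(\g)$-crystals. Alternatively, one may short-circuit the signature verification by invoking the crystal characterization of $B(\Lambda_0)$: exhibit $\mathcal{Y}(\Lambda_0)$ as an abstract highest weight crystal generated by the empty wall with $\wt = \Lambda_0$, check connectedness and the Stembridge-type local axioms, and apply the uniqueness of the simple highest weight crystal of weight $\Lambda_0$; the first route is however the one which matches the structure already used throughout Section~\ref{Subsection: crystals}.
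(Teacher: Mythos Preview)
The paper does not give its own proof of this theorem: it is stated with an explicit citation to \cite[Thm.~7.1]{Kang03} and used as a black box. There is nothing in the present paper to compare your proposal against.

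That said, your outline is essentially the strategy of the cited reference. Kang's proof in \cite{Kang03} does go through the Kyoto path realization of \cite{KKMMNN91,KKMMNN92}: he fixes the level one perfect crystal for $A^{(2)}_{2\ell}$, identifies columns of a Young wall with elements of that perfect crystal, and shows that the Young-wall Kashiwara operators match the tensor-product signature rule on paths. The ``proper'' condition together with ``no removable $\delta$'' is exactly what cuts down the set of all Young walls to the connected component of the ground state path, i.e.\ to $B(\Lambda_0)$ inside the semi-infinite tensor product. Your alternative route via the uniqueness characterization of highest weight crystals is also viable, but it is not how the cited result is established. One small caution: the bijection $\Psi$ you sketch is not quite ``columns $\leftrightarrow$ elements of $B$'' on the nose, since a column of a Young wall records a stack of blocks above the ground state rather than an element of the finite perfect crystal directly; the translation requires the explicit dictionary in \cite{Kang03} between column heights modulo $|\delta|$ and elements of $B$, and this is where the ``proper'' condition enters nontrivially.
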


Let $\sB(\Lambda)_{\Lambda-\beta}$ be the set of isomorphism classes of irreducible  $R^\Lambda(\beta)$-modules, for $\beta \in \rlQ^+$, and define
$\sB(\Lambda)=\sqcup_{\beta\in\rlQ^+} \sB(\Lambda)_{\Lambda-\beta}$.
Then, Lauda and Vazirani defined a crystal structure on $\sB(\Lambda)$ and proved the theorem below. We note that the idea to use socle and cosocle for defining crystal structure goes back to Leclerc's interpretation of the modular branching rule as a crystal in the Hecke algebra case.

\begin{thm}[\protect{\cite[Thm.7.5]{LV11}}] \label{Thm: categorification of crystals}
The crystal $\sB(\Lambda)$ is isomorphic to the crystal $B(\Lambda)$ of the highest weight module $V_q(\Lambda)$.
\end{thm}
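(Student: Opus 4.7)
\bigskip
\noindent\textbf{Proof plan.}
The plan is to equip $\sB(\Lambda)$ with a crystal structure using the exact functors $E_i,F_i$, verify Kashiwara's abstract crystal axioms, and then identify the result with $B(\Lambda)$ by matching highest weight vectors and invoking the categorification of $V_q(\Lambda)$.

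First I would define the crystal operators on $\sB(\Lambda)$. For $[M]\in\sB(\Lambda)_{\Lambda-\beta}$, set $\tilde e_i[M]=[\Soc(E_iM)]$ when $E_iM\neq0$ and $\tilde e_i[M]=0$ otherwise, and $\tilde f_i[M]=[\Top(F_iM)]$ (since $F_i$ sends projectives to projectives but not irreducibles to irreducibles, passing to cosocle is the right normalization). Define $\varepsilon_i([M])=\max\{k\ge0\mid E_i^kM\neq0\}$ and $\varphi_i([M])=\varepsilon_i([M])+\langle h_i,\Lambda-\beta\rangle$, and set $\wt([M])=\Lambda-\beta$. The first technical step is to show these operators are well-defined, i.e.\ that $\Soc(E_iM)$ and $\Top(F_iM)$ are irreducible whenever nonzero. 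This follows by standard adjunction arguments combined with the decomposition of $E_iF_i$ and $F_iE_i$ given by Theorem \ref{Thm: categorification}: if $\Top(F_iM)$ had two non-isomorphic simple summands $N_1,N_2$, then $\dim\Hom(F_iM,N_j)\ge1$ for $j=1,2$, hence by adjunction $\dim\Hom(M,E_iN_j)\ge1$, forcing (after iterating with the Kang-Kashiwara $\mathfrak{sl}_2$-relation) a contradiction with the $E_iF_i$ vs. $F_iE_i$ decomposition applied on $M$.

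Next I would verify the crystal axioms. The relations $\tilde e_i\tilde f_i[M]=[M]$ whenever $\tilde f_i[M]\neq0$, and $\tilde f_i\tilde e_i[M]=[M]$ whenever $\tilde e_i[M]\neq0$, are again consequences of the $\mathfrak{sl}_2$-relation for $(E_i,F_i)$ on each weight space: taking cosocle of $F_i$ and socle of $E_i$ are mutually inverse on the subset of simples where they are nonzero. The weight and $\varepsilon_i,\varphi_i$ compatibilities are straightforward from the definitions and from $\langle h_i,\wt(\tilde f_i[M])\rangle=\langle h_i,\wt([M])\rangle-2$.

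Finally I would identify $\sB(\Lambda)$ with $B(\Lambda)$. Because $K_0(R^\Lambda)\cong V(\Lambda)_\Z$ as $\g_\Z$-modules (Theorem \ref{Thm: categorification V} specialized at $q=1$), the irreducible modules $\sB(\Lambda)$ form a basis dual (up to a triangular change) to the indecomposable projectives, and the operators $\tilde e_i,\tilde f_i$ agree with the action of $e_i,f_i$ modulo lower order terms in a suitable filtration. There is a unique highest weight element $[L(\emptyset)]\in\sB(\Lambda)_\Lambda$ corresponding to the trivial module of $R^\Lambda(0)=\bR$, and by induction on height one shows every element of $\sB(\Lambda)$ is reached from it by $\tilde f_i$'s, giving connectedness. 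Combined with the weight multiplicities (which match those of $V_q(\Lambda)$, hence of $B(\Lambda)$), the standard uniqueness result for crystals of integrable highest weight modules (Kashiwara) yields an isomorphism $\sB(\Lambda)\simeq B(\Lambda)$.

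The main obstacle is the irreducibility of $\Top(F_iM)$ and $\Soc(E_iM)$: without the Kang-Kashiwara $\mathfrak{sl}_2$-relation this is essentially the whole content of the theorem, and it is exactly here that the cyclotomic hypothesis enters (the corresponding statement fails for non-cyclotomic $R(\beta)$ unless one uses different functors). Once this is established, the remaining verification of the crystal axioms and the identification with $B(\Lambda)$ is a standard matter.
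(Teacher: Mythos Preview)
The paper does not prove this theorem: it is stated as a citation of \cite[Thm.~7.5]{LV11}, with only the remark that the idea of using socle and cosocle to define crystal operators goes back to Leclerc. So there is no proof in the paper to compare your proposal against.

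That said, your outline is roughly the Lauda--Vazirani strategy, and it is worth flagging where it is thin. The crucial step, as you recognize, is the irreducibility of $\Soc(E_iM)$ and $\Top(F_iM)$ for irreducible $M$. Your sketch of this step (``two summands $N_1,N_2$ give maps $M\to E_iN_j$, then iterate with the $\mathfrak{sl}_2$-relation to get a contradiction'') is not an argument: nothing you wrote explains what the contradiction actually is, and in fact the $E_iF_i$/$F_iE_i$ decomposition alone does not obviously forbid $\Top(F_iM)$ from having several simple summands. In \cite{LV11} this is handled by a careful analysis of the divided-power functors $E_i^{(k)}$, $F_i^{(k)}$ and Kato modules (the ``string'' or $i$-character arguments going back to Grojnowski and Kleshchev), not by invoking the Kang--Kashiwara relations. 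Indeed, Lauda--Vazirani's argument does not rely on \cite{KK11}; conversely, the categorical $\mathfrak{sl}_2$-relation does give a clean route to these facts, but then you should spell out how (e.g.\ via the resulting isomorphism $F_i^{(\varphi_i)}E_i^{(\varepsilon_i)}M\cong$ an indecomposable, or via Chuang--Rouquier's analysis of minimal categorifications). Either way, the sentence you wrote is a placeholder, not a proof.

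The identification with $B(\Lambda)$ at the end is also slightly off: matching weight multiplicities with $V_q(\Lambda)$ is not needed, and indeed is circular if you are trying to deduce it. Once you know $\sB(\Lambda)$ is a connected highest-weight crystal with highest weight $\Lambda$ satisfying the axioms, Kashiwara's uniqueness theorem for $B(\Lambda)$ gives the isomorphism directly.
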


A realization of the crystal $B(\Lambda_0)$ in terms of partitions is easier to handle for those who are not familiar with Young walls, and we explain the realization by
$h$-restricted $h$-strict partitions from \cite[Thm.9.2]{BK01}.

For $h\in \Z_{\ge 0}$, a partition $\lambda = (\lambda_1,\lambda_2, \ldots )$ is {\it $h$-strict} if $h$ divides $\lambda_r$,
whenever $\lambda_r = \lambda_{r+1}$ for $r \ge 1$. If $h=0$, we understand it as a strict partition.
An $h$-strict partition $\lambda$ is {\it $h$-restricted} if it satisfies $\lambda_r - \lambda_{r+1} < h$ if $h | \lambda_r$ and
$\lambda_r - \lambda_{r+1} \le h$ if $h \nmid \lambda_r$. We denote by $\mathcal{RP}_h$ the set of all $h$-restricted $h$-strict
partitions. Let
\begin{align} \label{Eq: def of h}
h = 2\ell+1.
\end{align}
For a Young wall  $Y\in \mathcal{Y}(\Lambda_0)$, let $\lambda_k(Y)$ be the number of stacked blocks in the $k$th column of $Y$.
Then, it is easy to see that the map $Y \mapsto \lambda_Y= (\lambda_1(Y), \lambda_2(Y), \ldots)$ is a 1-1 correspondence between $\mathcal{Y}(\Lambda_0)$ and $\mathcal{RP}_h$.
For example, consider the case $A= A_{4}^{(2)}$, where $h = 5$.
Then, $ \lambda(Y_1) = (1,1,0, \ldots )$ and $ \lambda(Y_2) = (3,3,1,0, \ldots )$ are not $5$-strict,
$ \lambda(Y_3) = (7,5,0, \ldots ) $ is $5$-strict but not $5$-restricted, and $\lambda(Y_4) = (5,5,1,0, \ldots )$ and $\lambda(Y_5) = (6,1,0, \ldots )$
are $5$-strict and $5$-restricted.

Thus, $\mathcal{RP}_h$ has the induced crystal structure, which is easy to describe, and
the weight of $\lambda\in\mathcal{RP}_h$ is given by the residue pattern
$\nu_\delta= 01\ldots\ell\ldots10$ on the nodes of $\lambda$. We define, for $\beta=\sum_{i\in I}k_i\alpha_i \in \rlQ^+$,
$$\mathcal{RP}_h(\beta)=\{ \lambda \in \mathcal{RP}_h \mid \text{the number of $i$-nodes is $k_i$, for $i\in I.$}\}$$
Then, we have the following  proposition by
Theorem \ref{Thm: Young walls} and Theorem \ref{Thm: categorification of crystals}.
\begin{prop} \label{Prop: the number of simples}
 For $\beta \in \rlQ^+$, the number of isomorphism classes of irreducible $\KLR(\beta)$-modules is equal to $ |\mathcal{RP}_h(\beta)|$.
\end{prop}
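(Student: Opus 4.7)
The plan is to string together the three identifications already made available in the excerpt: the crystal structure on the set of irreducible modules over $\KLR(\beta)$, the Young wall realization of $B(\Lambda_0)$, and the explicit bijection between Young walls and $h$-restricted $h$-strict partitions. The entire argument is essentially counting vertices in the weight space $(\Lambda_0-\beta)$ of the crystal $B(\Lambda_0)$ in three different models and checking that the bijections preserve weights.

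First I would start from $\sB(\Lambda_0)_{\Lambda_0-\beta}$, which by definition is the set of isomorphism classes of irreducible $\KLR(\beta)$-modules we want to count. Applying Theorem \ref{Thm: categorification of crystals} with $\Lambda=\Lambda_0$, this set is in bijection with the weight-$(\Lambda_0-\beta)$ part of $B(\Lambda_0)$. Then Theorem \ref{Thm: Young walls} identifies $B(\Lambda_0)$ with $\mathcal{Y}(\Lambda_0)$ as crystals, so we can replace the abstract crystal vertices by the subset $\mathcal{Y}(\Lambda_0)_{\Lambda_0-\beta}$ of proper Young walls of weight $\Lambda_0-\beta$ without removable $\delta$.

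Next I would invoke the correspondence $Y\mapsto\lambda_Y=(\lambda_1(Y),\lambda_2(Y),\dots)$ described just after \eqref{Eq: def of h}, which gives a 1--1 correspondence between $\mathcal{Y}(\Lambda_0)$ and $\mathcal{RP}_h$ with $h=2\ell+1$. I need to verify that this correspondence matches weights: the $i$-blocks of $Y$ are stacked according to the fixed column pattern $0,1,2,\dots,\ell,\dots,2,1,0$, and this is exactly the residue pattern $\nu_\delta$ placed on the nodes of the corresponding column of $\lambda_Y$. Hence the number of $i$-blocks of $Y$ equals the number of $i$-nodes of $\lambda_Y$, so $\wt(Y)=\Lambda_0-\beta$ if and only if $\lambda_Y\in\mathcal{RP}_h(\beta)$ in the sense of the definition given immediately before the proposition.

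Assembling the three bijections yields
\[
|\Irr\,\KLR(\beta)|=|\sB(\Lambda_0)_{\Lambda_0-\beta}|=|B(\Lambda_0)_{\Lambda_0-\beta}|=|\mathcal{Y}(\Lambda_0)_{\Lambda_0-\beta}|=|\mathcal{RP}_h(\beta)|,
\]
which is the claim. There is really no hard step here; the only thing that requires a moment's care is the weight compatibility of the Young-wall-to-partition map, and this is immediate from the explicit stacking pattern displayed in Section \ref{Subsection: crystals}.
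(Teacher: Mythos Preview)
Your proposal is correct and is essentially the same as the paper's own argument: the paper merely says the proposition follows from Theorem~\ref{Thm: Young walls} and Theorem~\ref{Thm: categorification of crystals}, and you have spelled out exactly that chain of bijections together with the weight compatibility of the map $Y\mapsto\lambda_Y$.
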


\vskip 1em

\section{Dimension formula for $\KLR(\beta)$}

\subsection{Fock space revisited}

We need some computation in $\F$ before giving $\dim \KLR(\beta)$. As in the Fock space theory for type $A^{(1)}_{e-1}$, we may describe the action of Chevalley generators by the combinatorics of adding/removing nodes of Young diagrams, after minor modification.
Let $\lambda \vdash n$ be a shifted Young diagram.
\begin{itemize}
\item Suppose that we may remove a box of residue $i\in I$ from $\lambda$ and obtain a new shifted Young diagram. Then, we denote the resulting shifted Young diagram by
$\lambda \nearrow \sBox{i}$.
\item Similarly, if we may add a box
of residue $i\in I$ to $\lambda$ and obtain a new shifted Young diagram then we denote the resulting shifted Young diagram by $\lambda \swarrow \sBox{i} $.
\end{itemize}
We have $\wt(\lambda\nearrow \sBox{i})=\wt(\lambda)+\alpha_i$ and
$\wt(\lambda\swarrow \sBox{i})=\wt(\lambda)-\alpha_i$. Recalling the residue pattern $\nu_\delta$, a box to be removed (resp. added) has residue $i\in I$ if and only
if it is the rightmost box of a row of length $j+1$ in $\lambda$ (resp. $\lambda \swarrow \sBox{i}$) with $j\equiv i, -i-1$ modulo $h$. Thus, it follows from
$\eqref{Eq: action of efh for Binf}$ and $\eqref{Eq: Chevalley generators}$ that
\begin{align}\label{Eq: e phi and f phi}
e_i |\lambda\rangle = \sum_{\mu = \lambda \nearrow \ssBox{i}} \mathrm{m}(\lambda, \mu) |\mu\rangle, \quad
f_i |\lambda\rangle = \sum_{\mu = \lambda \swarrow \ssBox{i}} |\mu\rangle,
\end{align}
where $\mathrm{m}(\lambda, \mu) = \left\{
                           \begin{array}{ll}
                             2 & \hbox{ if $\wt(\mu) = \wt(\lambda) + \alpha_0$ and $l(\lambda) = l(\mu)$,}   \\
                             1 & \hbox{ otherwise.}
                           \end{array}
                         \right.
$
\vskip 0.5em
\begin{defn}
For $\lambda \vdash n$ and $\nu \in I^n$, we denote by
$K(\lambda,\nu)$ the number of standard tableaux of shape $\lambda$ that has the  residue sequence $\nu$. Namely, we define
\begin{align*}
K(\lambda, \nu) = | \{  T \in \ST(\lambda) \mid \nu = \res(T)  \} |.
\end{align*}
\end{defn}

\noindent
We have the following equality.
\begin{align} \label{Eq: number of ST}
|\ST(\lambda)| = \sum_{\nu \in I^{\Lambda_0-\wt(\lambda)}} K(\lambda, \nu).
\end{align}
In the next lemma,  $d$ is the scaling element.
Note that if $\wt(\lambda)=\Lambda_0-\sum_{i\in I}k_i\alpha_i$ then $k_0=-\langle d, \wt(\lambda)\rangle$.

\begin{lemma} \label{Lem: e_nu and f_nu}
Let $\lambda \vdash n$ and $\nu = (\nu_1, \ldots, \nu_n ) \in I^n$. Then we have
\begin{align*}
e_{\nu_1}e_{\nu_2} \cdots e_{\nu_n} |\lambda\rangle &= 2^{-\langle d, \wt(\lambda)\rangle - l(\lambda)}  K(\lambda, \nu) |0\rangle, \\
f_{\nu_n}f_{\nu_{n-1}} \cdots f_{\nu_1} |0\rangle &= \sum_{\mu \vdash n} K(\mu, \nu)  |\mu\rangle.
\end{align*}
\end{lemma}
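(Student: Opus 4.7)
The plan is to establish both identities simultaneously by induction on $n$, using the combinatorial action formulas \eqref{Eq: e phi and f phi} together with the recursion
$$K(\lambda,\nu) = \sum_{\mu = \lambda \nearrow \ssBox{\nu_n}} K(\mu, (\nu_1,\ldots,\nu_{n-1})),$$
obtained by deleting (respectively inserting) the box labelled $n$ in a standard tableau. The base case $n=0$ is trivial: for $\lambda = \emptyset$ we have $K(\lambda,\nu)=1$, $l(\lambda)=0$, and $\langle d, \wt(\lambda)\rangle = 0$, so both identities collapse to $|0\rangle = |0\rangle$.

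For the second identity, the induction step is clean. Apply $f_{\nu_n}$ to both sides of the inductive hypothesis, expand each $f_{\nu_n}|\mu\rangle$ via \eqref{Eq: e phi and f phi}, and swap the order of the resulting double sum to recognize the recursion for $K(\lambda,\nu)$. No multiplicities intervene because $f_i$ always acts with coefficient $1$.

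For the first identity, the powers of $2$ must be matched against the multiplicities $\mathrm{m}(\lambda,\mu) \in \{1,2\}$ appearing in the formula for $e_i$. First, since $\langle d,\alpha_i\rangle = \delta_{i0}$, the exponent $-\langle d,\wt(\lambda)\rangle$ equals the number $k_0(\lambda)$ of $0$-boxes of $\lambda$. Applying $e_{\nu_n}$ to $|\lambda\rangle$ and then invoking the induction hypothesis on each resulting term $|\mu\rangle$ with $\mu = \lambda \nearrow \sBox{\nu_n}$, the induction step reduces to the pointwise identity
$$\mathrm{m}(\lambda,\mu) \cdot 2^{k_0(\mu) - l(\mu)} = 2^{k_0(\lambda) - l(\lambda)}.$$
This is verified by a short case analysis built on the crucial observation that each row of a shifted Young diagram begins with residue $0$, so a single-box row necessarily ends in residue $0$:
\begin{itemize}
\item if $\nu_n \ne 0$, then $k_0$ is unchanged, the removed box cannot be the only box of its row, so $l$ is also unchanged, and $\mathrm{m}(\lambda,\mu)=1$;
\item if $\nu_n = 0$ and the removed box is the unique box of its row, then $k_0(\mu) = k_0(\lambda)-1$, $l(\mu) = l(\lambda)-1$, and $\mathrm{m}(\lambda,\mu)=1$;
\item if $\nu_n = 0$ and the removed box is not alone in its row, then $k_0(\mu) = k_0(\lambda)-1$, $l(\mu)=l(\lambda)$, and $\mathrm{m}(\lambda,\mu)=2$ from the definition.
\end{itemize}
In every case the identity holds, so combining with the recursion for $K(\lambda,\nu)$ closes the induction.

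The main point where care is required is the third case, where the multiplicity $2$ must exactly compensate for the fact that $l(\mu)=l(\lambda)$ while $k_0$ drops by one; this is, however, precisely the situation the definition of $\mathrm{m}(\lambda,\mu)$ singles out. Beyond this bookkeeping, no further combinatorics of shifted Young diagrams is needed.
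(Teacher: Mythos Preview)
Your proof is correct and follows essentially the same approach as the paper's: induction on $n$, expanding $e_{\nu_n}|\lambda\rangle$ via \eqref{Eq: e phi and f phi}, applying the inductive hypothesis to each $|\mu\rangle$, and then using the recursion $K(\lambda,\nu)=\sum_{\mu}K(\mu,\nu^-)$ together with the identity $\mathrm{m}(\lambda,\mu)\,2^{-\langle d,\wt(\mu)\rangle - l(\mu)} = 2^{-\langle d,\wt(\lambda)\rangle - l(\lambda)}$. The paper simply asserts this last identity ``by definition'' and declares the second equality ``clear'', whereas you spell out the three-case verification and the $f$-side induction explicitly; these are only differences in level of detail, not in strategy.
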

\begin{proof}
The second equality is clear. We prove the first equality by induction on $n$. Since it is obvious when $n=1$, we assume $n>1$.
By definition, if $\mu = \lambda \nearrow \ssBox{i}$, then we have
$$2^{-\langle d, \wt(\lambda)\rangle-l(\lambda)} = \mathrm{m}(\lambda, \mu) 2^{-\langle d, \wt(\mu)\rangle-l(\mu)}.$$
Let $j = \nu_n$ and $\nu^- = (\nu_1, \ldots, \nu_{n-1})$.
Using the induction hypothesis and $\eqref{Eq: e phi and f phi}$, we obtain
\begin{align*}
e_{\nu_1}e_{\nu_2} \cdots e_{\nu_n} |\lambda\rangle &=
\sum_{\mu = \lambda \nearrow \ssBox{j}} \mathrm{m}(\lambda, \mu)e_{\nu_1}e_{\nu_2} \cdots e_{\nu_{n-1}} |\mu\rangle \\
& = \sum_{\mu = \lambda \nearrow \ssBox{j}} \mathrm{m}(\lambda, \mu)  2^{-\langle d, \wt(\mu)\rangle - l(\mu)}  K(\mu, \nu^-) |0\rangle  \\
&=  2^{-\langle d, \wt(\lambda)\rangle - l(\lambda)} \sum_{\mu = \lambda \nearrow \ssBox{j}}   K(\mu, \nu^-) |0\rangle \\
&=  2^{-\langle d, \wt(\lambda)\rangle - l(\lambda)}  K(\lambda, \nu) |0\rangle.
\end{align*}
Hence the desired formula follows.
\end{proof}

\subsection{Dimension formula}  \label{Section: dimension formula}

Now we are ready to state and prove the main theorem of this section. We remark that dimension of cyclotomic quiver Hecke algebras is not known
except for block algebras of cyclotomic Hecke algebras. In the case of cyclotomic Hecke algebras, 
we know $\dim R^{\Lambda}(\beta)$ from the theory of cellular algebras. For a suitable definition of $\lambda$ and $\mathsf{ST}(\lambda)$, we have
$$
\dim R^\Lambda(\beta)=\sum_{\wt(\lambda)=\Lambda-\beta} |\ST(\lambda)|^2.
$$
Theorem \ref{Thm: dimension formula} below gives the dimension of the finite quiver Hecke algebra $R^{\Lambda_0}(\beta)$ of type $A^{(2)}_{2\ell}$
in terms of shifted standard tableaux. The idea to obtain the formula is different, and we use computation in the Fock space $\F$ and the biadjointness result from \cite{KK11} and \cite{Kash11}.

We recall functors $E_i$ and $F_i$ from Section \ref{Subsection: integrable modules} and consider them in non-graded setting.
\begin{align*}
E_i &: R^\Lambda(\beta + \alpha_i)\text{-mod} \longrightarrow R^\Lambda(\beta)\text{-mod}, \\
F_i &: R^\Lambda(\beta )\text{-mod} \longrightarrow R^\Lambda(\beta+ \alpha_i)\text{-mod},
\end{align*}
Then the pair $(E_i,F_i)$ is a biadjoint pair of exact functors by \cite[Thm.4.5]{KK11}, \cite[3.2, Thm.3.5]{Kash11}. \cite[3.2]{Kash11} gives the adjunction ${\rm id}\to E_iF_i$, $F_iE_i\to{\rm id}$ on the one hand, \cite[Thm.3.5]{Kash11} gives the adjunction
$E_iF_i\to{\rm id}$, ${\rm id}\to F_iE_i$ on the other hand.

\begin{lemma}\label{symmetric hom}
We have the following equality, for $M, N\in \KLR(\beta)\text{\rm -mod}$.
$$\dim \Hom_{\KLR(\beta)}(M, N)=\dim \Hom_{\KLR(\beta)}(N, M).$$
\end{lemma}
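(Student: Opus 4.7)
The approach is induction on $|\beta|$. The base case $\beta=0$ is immediate, since $\KLR(0)\cong\bR$ gives $\dim\Hom(M,N)=(\dim M)(\dim N)=\dim\Hom(N,M)$.

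For the inductive step, fix $\beta\neq 0$ and pick any $i\in\I$ with $\beta-\alpha_i\in\rlQ^+$. The biadjointness of $(E_i,F_i)$ provides the two natural isomorphisms
\begin{align*}
\Hom_{\KLR(\beta)}(F_iZ,N) &\cong \Hom_{\KLR(\beta-\alpha_i)}(Z,E_iN),\\
\Hom_{\KLR(\beta)}(N,F_iZ) &\cong \Hom_{\KLR(\beta-\alpha_i)}(E_iN,Z),
\end{align*}
for any $Z\in\KLR(\beta-\alpha_i)$-mod and $N\in\KLR(\beta)$-mod. Applying the inductive hypothesis inside $\KLR(\beta-\alpha_i)$-mod to the pair $(Z,E_iN)$ shows that the dimensions of the two right-hand sides coincide, so $\dim\Hom(F_iZ,N)=\dim\Hom(N,F_iZ)$. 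This establishes the desired equality whenever at least one of the arguments has the form $F_iZ$.

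To extend this to arbitrary $(M,N)$ one exploits the decomposition $\KLR(\beta)\cong\bigoplus_{i\in\I}F_i(\KLR(\beta-\alpha_i))$ as a left $\KLR(\beta)$-module, obtained by grouping the summands $\KLR(\beta)e(\nu)$ of the regular module according to the last entry of $\nu$ (using the identification $\KLR(\beta)e(\nu'\ast i)\cong F_i(\KLR(\beta-\alpha_i)e(\nu'))$ that follows from the definition of $F_i$). Combined with the previous paragraph, this yields the equality whenever one argument is a direct sum of induced modules, and in particular whenever one of the arguments is a finitely generated projective. The final step -- passing from projective arguments to arbitrary modules -- invokes the self-injectivity of $\KLR(\beta)$, itself a consequence of the biadjointness established in \cite{KK11}; one resolves one module by projectives and coresolves the other by projective-injective modules, then combines the projective case with a careful dimension count along these (co)resolutions. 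The chief obstacle is precisely this last extension step: while biadjointness plus induction immediately gives symmetry for $F_iZ$-type arguments, reducing an arbitrary module to this framework requires the full strength of the symmetric/self-injective structure of $\KLR(\beta)$ together with a termwise (not merely Euler-characteristic) analysis of the Hom spaces appearing in the resolutions.
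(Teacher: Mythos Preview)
Your argument follows the paper's line exactly: induction on $|\beta|$, use biadjointness of $(E_i,F_i)$ together with the inductive hypothesis to obtain symmetry whenever one argument has the form $F_iZ$, and then observe that every projective is a summand of such a module. The paper phrases this last observation Lie-theoretically---the equality $V(\Lambda_0)_{\Lambda_0-\beta}=\sum_i f_iV(\Lambda_0)_{\Lambda_0-\beta+\alpha_i}$, read through the categorification theorem, says precisely that every indecomposable projective occurs in some $F_i$-image---whereas you write down the equivalent concrete decomposition $\KLR(\beta)\cong\bigoplus_i F_i(\KLR(\beta-\alpha_i))$ of the left regular module. These are the same argument; yours is just more explicit.

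The one place you go beyond the paper is your final paragraph, where you attempt to pass from projective $M$ to arbitrary $M$ via self-injectivity and (co)resolutions. The paper simply stops after the projective reduction. Your proposed mechanism, however, is not convincing as written: for a self-injective algebra $A$, the symmetry $\dim\Hom_A(M,N)=\dim\Hom_A(N,M)$ for \emph{all} $M,N$ is equivalent, via the Frobenius duality $\Hom_A(M,N)^*\cong\Hom_A(N,\nu M)$, to the Nakayama functor fixing every module up to isomorphism. A termwise comparison of the long exact sequences obtained from a projective resolution and an injective coresolution of $M$ does not yield this---the $\Ext$ terms on the two sides have no reason to match. If you want the full statement, the clean route is to invoke the (now known) fact that cyclotomic quiver Hecke algebras are graded symmetric, which makes the lemma immediate; alternatively one can argue that the projective case forces the Nakayama permutation to be trivial and proceed from there. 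For the paper's purposes this subtlety is harmless: the lemma is only ever applied to projective modules in the proof of the dimension formula.
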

\begin{proof}
We prove the assertion by induction on $|\beta|$. Suppose that the assertion holds for all $\beta'\in\rlQ^+$ such that $|\beta'|<|\beta|$. Then, by the induction hypothesis and the biadjointness of $E_i$ and $F_i$, we have
\begin{align*}
\dim \Hom_{\KLR(\beta)}(F_iM, N)&=\dim \Hom_{\KLR(\beta-\alpha_i)}(M, E_iN)\\
&=\dim \Hom_{\KLR(\beta-\alpha_i)}(E_iN, M)=\dim \Hom_{\KLR(\beta)}(N, F_iM).
\end{align*}
As $\KLR(\beta)$'s categorify $V(\Lambda_0)$ and
$$V(\Lambda_0)_{\Lambda_0-\beta}=\sum_{i\in I} f_iV(\Lambda_0)_{\Lambda_0-\beta+\alpha_i},$$
where $V(\Lambda_0)_\mu$ is the $\mu$-weight space, the assertion follows.
\end{proof}

\begin{thm} \label{Thm: dimension formula}
Let $\lambda \vdash n$ be a shifted Young diagram consisting of $n$ boxes. For $\beta  \in \rlQ^+$ with $|\beta|=n$ and $\nu, \nu' \in I^\beta$, we have
\begin{align*}
\dim e(\nu') \KLR(n) e(\nu) &= \sum_{\lambda \vdash n} 2^{-\langle d, \wt(\lambda)\rangle - l(\lambda)} K(\lambda, \nu')K(\lambda, \nu), \\
\dim  \KLR(\beta)  &= \sum_{\lambda \vdash n,\ \wt(\lambda)=\Lambda_0 - \beta} 2^{-\langle d, \wt(\lambda) \rangle - l(\lambda)} |\ST(\lambda)|^2, \\[5pt]
\dim  \KLR(n)  &= \sum_{\lambda \vdash n} 2^{-\langle d, \wt(\lambda)\rangle  - l(\lambda)} |\ST(\lambda)|^2.
\end{align*}
\end{thm}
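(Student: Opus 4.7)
The third formula is immediate from the second by summing over $\beta\in \rlQ^+$, and the second follows from the first by summing over $\nu, \nu'\in I^\beta$ and invoking \eqref{Eq: number of ST}. Hence it suffices to prove the first formula.

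Set $P(\nu):=\KLR(n)e(\nu)$, so that $\dim e(\nu')\KLR(n)e(\nu)=\dim\Hom_{\KLR(n)}(P(\nu'),P(\nu))$. Unwinding the definition of $F_i$ from Section~\ref{Subsection: integrable modules} gives $F_i(\KLR(\beta)e(\mu))=\KLR(\beta+\alpha_i)e(\mu,i)$, and an easy induction on $n$ yields $P(\nu)\cong F_{\nu_n}\cdots F_{\nu_1}(P_0)$, where $P_0:=\KLR(0)=\bR$ is the trivial projective; the analogous statement holds for $\nu'$. Iterating the biadjunction $\Hom(F_iM,N)\cong\Hom(M,E_iN)$ $n$ times to peel off the outer $F_{\nu'_k}$'s one at a time, the Hom space becomes
\[
\Hom_{\bR}\bigl(P_0,\ E_{\nu'_1}E_{\nu'_2}\cdots E_{\nu'_n}F_{\nu_n}\cdots F_{\nu_1}(P_0)\bigr),
\]
which, because $\KLR(0)=\bR$ and $P_0=\bR$, is simply the $\bR$-dimension of the $\bR$-vector space $M:=E_{\nu'_1}\cdots E_{\nu'_n}F_{\nu_n}\cdots F_{\nu_1}(P_0)$.

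To compute $\dim_{\bR}M$, I specialize Theorem~\ref{Thm: categorification V} at $q=1$: the resulting isomorphism $K_0(R^{\Lambda_0})\cong V(\Lambda_0)_\Z$ identifies $[P_0]$ with the highest weight vector and the functors $F_i$, $E_i$ with the Chevalley generators $f_i$, $e_i$. Since $K_0(\KLR(0))=\Z[P_0]$ is free of rank one, $\dim_{\bR}M$ equals the coefficient of $[P_0]$ in $[M]$. Embedding $V(\Lambda_0)$ into the neutral fermion Fock space $\F$ of Section~\ref{Subsection: Fock space} so that $[P_0]\mapsto|0\rangle$, this coefficient equals the coefficient of $|0\rangle$ in $e_{\nu'_1}\cdots e_{\nu'_n}f_{\nu_n}\cdots f_{\nu_1}|0\rangle\in\F$. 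Applying Lemma~\ref{Lem: e_nu and f_nu} first to obtain $f_{\nu_n}\cdots f_{\nu_1}|0\rangle=\sum_{\lambda\vdash n}K(\lambda,\nu)|\lambda\rangle$, and then to each $|\lambda\rangle$ to obtain $e_{\nu'_1}\cdots e_{\nu'_n}|\lambda\rangle=2^{-\langle d,\wt(\lambda)\rangle-l(\lambda)}K(\lambda,\nu')|0\rangle$, one arrives at the coefficient $\sum_{\lambda\vdash n}2^{-\langle d,\wt(\lambda)\rangle-l(\lambda)}K(\lambda,\nu')K(\lambda,\nu)$, as required. The one delicate point is bookkeeping: the order of the $F_{\nu'_k}$'s is reversed by the biadjunction, and one must align this with the left-to-right convention of Lemma~\ref{Lem: e_nu and f_nu} (namely, that $e_{\nu'_n}$ acts first). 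Once this alignment is made, the Fock-space computation in Lemma~\ref{Lem: e_nu and f_nu} immediately delivers the formula.
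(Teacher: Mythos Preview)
Your proof is correct and follows essentially the same approach as the paper: both arguments reduce $\dim e(\nu')\KLR(n)e(\nu)$ to a Fock-space computation and finish by invoking Lemma~\ref{Lem: e_nu and f_nu}. The only difference is packaging. The paper first identifies $\dim\Hom(P(\nu'),P(\nu))$ with the Shapovalov pairing $([P(\nu')],[P(\nu)])$ on $V(\Lambda_0)$ (this is where Lemma~\ref{symmetric hom} is used), then extends the form to $\F$ before moving the $f$'s across as $e$'s. You instead apply the biadjunction $\Hom(F_iM,N)\cong\Hom(M,E_iN)$ directly at the module level, land in $\KLR(0)\text{-mod}=\bR\text{-mod}$, and read off the dimension as the coefficient of $|0\rangle$ in $e_{\nu'_1}\cdots e_{\nu'_n}f_{\nu_n}\cdots f_{\nu_1}|0\rangle$. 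Your route is marginally more direct (it bypasses Lemma~\ref{symmetric hom} and the extension of the Shapovalov form to $\F$), but the underlying mechanism is identical.
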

\begin{proof}
We have $K_0(R^{\Lambda_0})\cong V(\Lambda_0)_\Z$
by the paragraph after Theorem \ref{Thm: categorification V}.
Let $(\ ,\ )$ be the Shapovalov form on $V(\Lambda_0)$. It is a symmetric bilinear form characterized by the following properties, where $v_0$ is a vector which generates $\{ v\in V(\Lambda_0)_\Z \mid \wt(v)=\Lambda_0\}$ as a $\Z$-module.
$$ (v_0,v_0) = 1, \quad (f_i x, y) = (x, e_i y), \text{ for } x,y \in V(\Lambda_0).$$
As $(E_i, F_i)$ is a biadjoint pair and Lemma \ref{symmetric hom} holds, the above  characterization implies that the induced Shapovalov form on $K_0(R^{\Lambda_0})$
is given as follows:
\begin{align} \label{Eq: Shapovalov}
\dim \ \Hom_{\KLR(\beta)} (M,N) = ([M] , [N]), \text{ for projective $\KLR(\beta)$-modules $M$ and $N$.}
\end{align}

We identify $U(\g)|0\rangle=V(\Lambda_0)$ and embed $V(\Lambda_0)$ to the Fock space $\F$. As $V(\Lambda_0)$ is a direct summand of $\F$ as a $\g$-module, we may extend the Shapovalov form on $V(\Lambda_0)$ to a symmetric bilinear form on $\F$ that has the property
$$ (f_i x, y) = (x, e_i y), \text{ for } x,y \in \F.$$
Further, $F_i(\KLR(n)e(\nu))=\KLR(n+1)e(\nu,i)$ and
$$F_{\nu_n} \ldots F_{\nu_1}(\KLR(0))=\KLR(n) e(\nu), \ \text{for $\nu\in I^n$.}$$
Hence, for $\nu = (\nu_1, \ldots, \nu_n), \ \nu' = (\nu_1', \ldots, \nu_n') \in I^n$,
Lemma \ref{Lem: e_nu and f_nu} and $\eqref{Eq: Shapovalov}$ imply that
\begin{align*}
\dim e(\nu') \KLR(n) e(\nu) &= \dim \Hom_{\KLR(n)}( \KLR(n) e(\nu'),\ \KLR(n) e(\nu) )\\[3pt]
&= ( f_{\nu_n'}f_{\nu_{n-1}'} \ldots f_{\nu_1'}|0\rangle,\ f_{\nu_n}f_{\nu_{n-1}} \ldots f_{\nu_1} |0\rangle ) \\[3pt]
&= \sum_{\lambda \vdash n} K(\lambda, \nu) ( f_{\nu_n'}f_{\nu_{n-1}'} \ldots f_{\nu_1'}|0\rangle,\ |\lambda\rangle ) \\[3pt]
&= \sum_{\lambda \vdash n} K(\lambda, \nu) (\, |0\rangle,\  e_{\nu_1'}e_{\nu_{2}'} \ldots e_{\nu_n'} |\lambda\rangle ) \\[3pt]
&=  \sum_{\lambda \vdash n} 2^{-\langle d, \wt(\lambda)\rangle - l(\lambda)} K(\lambda, \nu')K(\lambda, \nu).
\end{align*}
To deduce the second formula from the first, we use
$\eqref{Eq: number of ST}$ and
$$\KLR(\beta) = \bigoplus_{\nu,\nu' \in I^\beta} e(\nu') \KLR(n) e(\nu).$$
Then we have
\begin{align*}
\dim \KLR(\beta)  &= \sum_{\nu,\nu' \in I^\beta} \sum_{\lambda \vdash n} 2^{-\langle d, \wt(\lambda)\rangle - l(\lambda)} K(\lambda, \nu')K(\lambda, \nu)\\[3pt]
&= \sum_{\lambda \vdash n,\ \wt(\lambda)=\Lambda_0 - \beta}  2^{-\langle d, \wt(\lambda)\rangle - l(\lambda)}  \sum_{\nu \in I^\beta}\sum_{\nu' \in I^\beta} K(\lambda, \nu')K(\lambda, \nu) \\[3pt]
&= \sum_{\lambda \vdash n,\ \wt(\lambda)= \Lambda_0 - \beta} 2^{-\langle d, \wt(\lambda) \rangle - l(\lambda)} |\ST(\lambda)|^2.
\end{align*}
The last formula follows from
$\displaystyle \KLR(n) = \bigoplus_{\beta \in \rlQ^+,\ |\beta| = n} \KLR(\beta) $.
\end{proof}

Theorem \ref{Thm: dimension formula} has several important consequences.

\begin{cor} \label{Cor: dimension}
\begin{enumerate}
\item Let $\nu \in I^n$. Then, $e(\nu) \ne 0$ in $\KLR(n)$ if and only if $\nu$ may be obtained from a standard tableau $T$ as $ \nu = \res(T)$.
\item We have the following hook length formula, for $\beta \in \rlQ^+$.
$$ \dim \KLR(\beta) = \sum_{\lambda \vdash n, \wt(\lambda)= \Lambda_0-\beta } 2^{-\langle d, \wt(\lambda) \rangle - l(\lambda)} \left( \frac{ n! }{ \prod_{(i,j)\in \lambda} h_{i,j} } \right)^2.$$
\item For any natural number $n$, we have the following equality.
\begin{align*}
n!  = \sum_{\beta \in \rlQ^+,\ |\beta|= n} 2^{ n -\langle d, \beta\rangle } \dim \KLR(\beta).
\end{align*}
\end{enumerate}
\end{cor}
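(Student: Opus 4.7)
The plan is to derive all three parts directly from the dimension formula (Theorem \ref{Thm: dimension formula}), with the combinatorics of Theorem \ref{Thm: hook formula} used to collapse the sums.

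For part (1), I would apply the first equality of Theorem \ref{Thm: dimension formula} with $\nu'=\nu$, which gives
$$\dim e(\nu)\KLR(n) e(\nu)=\sum_{\lambda\vdash n} 2^{-\langle d,\wt(\lambda)\rangle-l(\lambda)}K(\lambda,\nu)^2.$$
Since $e(\nu)$ is idempotent, $e(\nu)=0$ if and only if $e(\nu)\KLR(n)e(\nu)=0$, so I need to show that the right hand side is nonzero exactly when some $K(\lambda,\nu)$ is nonzero. The numbers $K(\lambda,\nu)$ are nonnegative integers, so it suffices that each coefficient $2^{-\langle d,\wt(\lambda)\rangle-l(\lambda)}$ be strictly positive. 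Writing $\wt(\lambda)=\Lambda_0-\sum_i k_i\alpha_i$, where $k_i$ counts the $i$-residue boxes, one has $-\langle d,\wt(\lambda)\rangle=k_0$. I would then note that the leftmost box of row $i$ sits at $(i,i)$, whose residue is the first entry of the pattern $\nu_\delta$, namely $0$; hence every row contributes a $0$-box, giving $k_0\ge l(\lambda)$. The coefficient is therefore a positive integer, and the claim follows.

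Part (2) is immediate: substitute the hook length formula of Theorem \ref{Thm: hook formula}(1) for $|\ST(\lambda)|$ into the second formula of Theorem \ref{Thm: dimension formula}. For part (3), I would start from
$$\dim\KLR(\beta)=\sum_{\lambda\vdash n,\ \wt(\lambda)=\Lambda_0-\beta}2^{-\langle d,\wt(\lambda)\rangle-l(\lambda)}|\ST(\lambda)|^2,$$
multiply by $2^{n-\langle d,\beta\rangle}$, and use $\langle d,\Lambda_0\rangle=0$ together with $\wt(\lambda)=\Lambda_0-\beta$ to rewrite the exponent as
$$n-\langle d,\beta\rangle-\langle d,\wt(\lambda)\rangle-l(\lambda)=n-l(\lambda).$$
Summing over all $\beta\in\rlQ^+$ with $|\beta|=n$ collects every $\lambda\vdash n$ exactly once, yielding
$$\sum_{\beta\in\rlQ^+,\ |\beta|=n}2^{n-\langle d,\beta\rangle}\dim\KLR(\beta)=\sum_{\lambda\vdash n}2^{n-l(\lambda)}|\ST(\lambda)|^2,$$
and the right hand side equals $n!$ by Theorem \ref{Thm: hook formula}(2).

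There is no serious obstacle here; the only point requiring care is the positivity of the rational coefficient in part (1), which reduces to the elementary diagonal observation $k_0\ge l(\lambda)$ noted above. Everything else is bookkeeping once Theorem \ref{Thm: dimension formula} is in hand.
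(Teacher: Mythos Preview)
Your proof is correct and follows essentially the same route as the paper's: all three parts are read off directly from Theorem~\ref{Thm: dimension formula} together with Theorem~\ref{Thm: hook formula}. The one remark is that the positivity of $2^{-\langle d,\wt(\lambda)\rangle-l(\lambda)}$ in part~(1) is automatic (it is a power of $2$), so your diagonal argument that $k_0\ge l(\lambda)$, while true, is not actually needed.
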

\begin{proof}
(1) It follows from Theorem \ref{Thm: dimension formula} that
 $$e(\nu) \KLR(n) e(\nu) = \sum_{\lambda \vdash n} 2^{-\langle d, \wt(\lambda)\rangle - l(\lambda)} K(\lambda, \nu)^2.$$
Then, observe that $e(\nu)\ne0$ if and only if $\dim e(\nu) \KLR(n) e(\nu)>0$.

(2) It follows from Theorem \ref{Thm: hook formula}(1) and Theorem \ref{Thm: dimension formula}.

(3) Using Theorem \ref{Thm: hook formula}(2) and Theorem \ref{Thm: dimension formula}, we obtain
\begin{align*}
n! &= \sum_{\lambda \vdash n} 2^{ n - l(\lambda)} |\ST(\lambda)|^2 \\
&= \sum_{\beta \in \rlQ^+,\ |\beta|=n} 2^{n + \langle d, \Lambda_0- \beta \rangle } \sum_{\lambda \vdash n,\ |\lambda|=\beta}
2^{-\langle d, \Lambda_0-\beta \rangle - l(\lambda)} |\ST(\lambda)|^2 \\[3pt]
&= \sum_{\beta \in \rlQ^+,\ |\beta|=n} 2^{ n - \langle d, \beta \rangle } \dim  R^{\Lambda_0}(\beta).
\end{align*}
We have proved the equality.
\end{proof}

\vskip 1em

\section{Representation type of $R^{\Lambda_0}(\beta)$} \label{Section: repn type}

\subsection{Reduction to $\KLR(k\delta)$}

We denote by $R^\Lambda(n)\text{-}\mathrm{mod}$ the category of finitely generated $R^\Lambda(n)$-modules and we define functors $E$ and $F$ by
\begin{align*}
E &:= \sum_{i\in I} E_i \ : R^\Lambda(n+1)\text{-}\mathrm{mod} \longrightarrow R^\Lambda(n)\text{-}\mathrm{mod}, \\
F &:= \sum_{i\in I} F_i \ : R^\Lambda(n)\text{-}\mathrm{mod} \longrightarrow R^\Lambda(n+1)\text{-}\mathrm{mod}.
\end{align*}
Recall that the pair $(E_i,F_i)$ is a biadjoint pair of exact functors.

\begin{lemma}
Let $\mathcal A$ and $\mathcal B$ be abelian categories, $E:{\mathcal A}\to{\mathcal B}$ an exact functor. If $F$ is right adjoint (resp. left adjoint) to $E$ then
$F$ sends an injective object (resp. projective object) to an  injective object (resp. projective object).
\end{lemma}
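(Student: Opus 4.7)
The plan is to use the standard Hom-adjunction identity together with exactness of $E$, and note that injectivity (resp. projectivity) of an object $X$ is equivalent to exactness of the functor $\Hom(-,X)$ (resp. $\Hom(X,-)$).

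First I would handle the right-adjoint case. Suppose $F$ is right adjoint to $E$ and let $I$ be an injective object of $\mathcal{B}$. By adjunction, there is a natural isomorphism of functors on $\mathcal{A}^{\mathrm{op}}$,
\begin{equation*}
\Hom_{\mathcal{A}}(-, FI) \;\cong\; \Hom_{\mathcal{B}}(E(-), I).
\end{equation*}
The functor $E$ is exact by hypothesis, so it sends short exact sequences in $\mathcal{A}$ to short exact sequences in $\mathcal{B}$, and $\Hom_{\mathcal{B}}(-, I)$ is exact because $I$ is injective. Composing, $\Hom_{\mathcal{A}}(-, FI)$ is exact, which means $FI$ is injective in $\mathcal{A}$.

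The left-adjoint case is dual. If $F$ is left adjoint to $E$ and $P$ is a projective object of $\mathcal{B}$, the adjunction gives a natural isomorphism
\begin{equation*}
\Hom_{\mathcal{A}}(FP, -) \;\cong\; \Hom_{\mathcal{B}}(P, E(-)).
\end{equation*}
Again $E$ is exact and $\Hom_{\mathcal{B}}(P, -)$ is exact by projectivity of $P$, so the composition $\Hom_{\mathcal{A}}(FP, -)$ is exact, showing $FP$ is projective in $\mathcal{A}$.

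There is no real obstacle here; the statement is a formal consequence of the definitions. The only thing worth taking care of is the direction of variance: making sure that in the right-adjoint case we apply $\Hom(-, I)$ to the exact sequence $E(-)$ and in the left-adjoint case we apply $\Hom(P, -)$ to the exact sequence $E(-)$. In the paper's setting this lemma will be applied with $\mathcal{A}=R^{\Lambda}(n)\text{-}\mathrm{mod}$, $\mathcal{B}=R^{\Lambda}(n+1)\text{-}\mathrm{mod}$, $E=E_i$ and $F=F_i$ (and the swapped roles), where biadjointness of $(E_i,F_i)$ was recalled in the previous section.
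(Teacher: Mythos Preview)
Your proof is correct and is exactly the standard argument. The paper actually states this lemma without proof, treating it as well known; your write-up supplies precisely the expected justification via the adjunction isomorphism and exactness of $E$.
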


\begin{prop}\label{self-injectivity}
$R^{\Lambda}(\beta)$ is a self-injective algebra.
\end{prop}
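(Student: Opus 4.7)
The plan is to use the preceding lemma together with the biadjointness of $(E_i, F_i)$ to show that every indecomposable projective module is also injective, which characterizes self-injective algebras.

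First I would observe the key consequence of the preceding lemma: since $E_i$ is exact and $F_i$ is both left and right adjoint to $E_i$, the lemma applied twice shows that $F_i$ sends projective objects to projective objects (the already-used fact) \emph{and} also sends injective objects to injective objects. Dually, $E_i$ preserves both projectives and injectives as well. This is the only place where the biadjointness is essential.

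Next I would build up the regular module $R^\Lambda(\beta)$ from the trivial algebra $R^\Lambda(0) = \bR$ by iterated application of $F_i$'s. A direct computation from the definition $F_i(M) = R^\Lambda(\beta+\alpha_i)e(\beta,i) \otimes_{R^\Lambda(\beta)} M$ gives
\begin{equation*}
F_{\nu_n} F_{\nu_{n-1}} \cdots F_{\nu_1}\bigl(R^\Lambda(0)\bigr) \;\cong\; R^\Lambda(\beta)\, e(\nu),
\end{equation*}
for any $\nu = (\nu_1, \ldots, \nu_n) \in I^\beta$, since $e(\beta,i)$ reduces to $e(\nu, i)$ after tensoring with the summand $R^\Lambda(\beta)e(\nu)$. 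Because $R^\Lambda(0) = \bR$ is trivially both projective and injective over itself, and because each $F_{\nu_k}$ preserves both properties, it follows by induction on $n = |\beta|$ that $R^\Lambda(\beta)e(\nu)$ is simultaneously projective and injective as a left $R^\Lambda(\beta)$-module, for every $\nu \in I^\beta$.

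Finally, since
\begin{equation*}
R^\Lambda(\beta) \;=\; \bigoplus_{\nu \in I^\beta} R^\Lambda(\beta)\, e(\nu)
\end{equation*}
is a finite direct sum of injective modules, the regular left module $R^\Lambda(\beta)$ is itself injective, which is the definition of self-injectivity. The main subtlety is verifying cleanly that $F_i$ preserves injectives in our non-graded, finitely-generated module setting; this requires only that $E_i$ is exact and biadjoint to $F_i$, both of which are already established through the quoted results of Kang–Kashiwara and Kashiwara. Once this step is in place, the rest is a routine induction building up $R^\Lambda(\beta)e(\nu)$ from the base case.
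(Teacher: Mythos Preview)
Your proof is correct and follows essentially the same route as the paper: use biadjointness of $(E_i,F_i)$ together with the preceding lemma to see that $F_i$ (equivalently $F=\sum_i F_i$) preserves injectives, then build up the regular module from the injective $R^\Lambda(0)=\bR$ by iterated application. The only cosmetic difference is that the paper applies the combined functor $F$ to obtain $R^\Lambda(n)=F\,R^\Lambda(n-1)$ directly and then passes to the summand $R^\Lambda(\beta)$, whereas you apply individual $F_{\nu_k}$'s to obtain each $R^\Lambda(\beta)e(\nu)$ and then sum; the argument is the same.
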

\begin{proof}
As the pair $(E_i,F_i)$ is a biadjoint pair of exact functors, $F$ is right adjoint to $E$. Hence $F$ preserves injective objects.
On the other hand, by definition, $R^{\Lambda}(k+1) = F R^{\Lambda}(k)$ for $k\in \Z_{\ge0}$. Since $R^{\Lambda}(0)$ is an injective $R^{\Lambda}(0)$-module,
$R^{\Lambda}(n)$ is an injective $R^{\Lambda}(n)$-module and so are its direct summands $R^{\Lambda}(\beta)$'s.
\end{proof}

In the following, we recall several general results. Let $A$ and $B$ be $\bR$-algebras.

\begin{thm} [Krause] \label{Thm: Krause}
If $A$ and $B$ are stable equivalent, then $A$ and $B$ have the same representation type.
\end{thm}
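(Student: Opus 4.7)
The plan is to exploit the fact that a stable equivalence by definition gives an equivalence of stable categories $\underline{A\text{-mod}}\simeq\underline{B\text{-mod}}$, so that it induces a bijection between isomorphism classes of indecomposable non-projective $A$-modules and $B$-modules. Since $A$ and $B$ are finite-dimensional $\bR$-algebras they have only finitely many indecomposable projectives, so the total number of indecomposable modules differs from the number of indecomposable non-projectives by a finite constant on each side. In particular, the case of finite representation type follows immediately: $A$ is of finite representation type if and only if it has only finitely many non-projective indecomposables, and this property transfers across the stable equivalence.

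For the tame/wild part, I would follow Krause's strategy (this is the content of Krause's paper on stable equivalences and representation type). The key idea is to reformulate tameness and wildness in purely stable-categorical language. For tameness one uses Crawley-Boevey's characterization via \emph{generic modules}: $A$ is tame if and only if for every dimension $d$ almost all $d$-dimensional indecomposables lie in a finite list of one-parameter families, and by Crawley-Boevey this is equivalent to a condition on generic modules of finite endolength. One shows that the notion of generic module of finite endolength can be detected in the stable category (projective modules have trivial endolength contribution), and hence the stable equivalence matches generic modules of finite endolength on the two sides. Wildness is then handled by the tame/wild dichotomy of Drozd: an algebra is wild if and only if it is not tame, so preservation of tameness automatically gives preservation of wildness.

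Concretely, the steps I would execute are: (i) fix a stable equivalence $F:\underline{A\text{-mod}}\to \underline{B\text{-mod}}$ and record the induced bijection on non-projective indecomposables; (ii) handle finite representation type by the counting argument above; (iii) recall Crawley-Boevey's theorem characterizing tameness through generic modules of finite endolength, and verify that an $A$-module is generic of finite endolength if and only if its image under $F$ is such a $B$-module, using that projective summands do not affect endolength; (iv) invoke Drozd's dichotomy to conclude wildness is preserved.

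The main obstacle is step (iii): translating the functor-categoretic / model-theoretic characterization of tameness into an invariant of the stable category. One has to argue that the endolength of an $A$-module is a stable invariant (up to contributions from projective summands, which one splits off) and that Crawley-Boevey's generic modules can be described through morphisms in $\underline{A\text{-mod}}$. This is precisely where Krause's original argument is delicate, and I would cite his result rather than reproduce it in full; all other steps are then formal.
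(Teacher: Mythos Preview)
The paper does not prove this statement: it is quoted as Krause's theorem with a citation to Lenzing's exposition \cite{Le00} of Krause's result \cite{Kr97}, and no argument is supplied. So there is no in-paper proof to compare against; any correct outline you give goes beyond what the paper does.

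That said, your sketch is a reasonable account of the known strategy. The finite-type part via the bijection on non-projective indecomposables is straightforward, and you correctly identify that the substantive content lies in the tame/wild step, for which you invoke Crawley-Boevey's characterisation by generic modules of finite endolength and Drozd's dichotomy. Your honest remark that step (iii) is exactly the delicate point and that you would cite Krause rather than reproduce it is appropriate: showing that genericity and endolength are stable invariants really does require passing to the larger module category $\mathrm{Mod}\,A$ and using functor-category or purity techniques, which is precisely what Krause's paper accomplishes. In short, your proposal is not wrong, but since the paper itself treats this as a black box, matching the paper simply means citing it.
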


See \cite[Thm.1]{Le00} for the Krause's theorem from \cite{Kr97}. The following theorem is well-known.

\begin{thm} [\protect{\cite[Thm.2.1]{Ri89}}] \label{Thm: Rickard}
If $A$ and $B$ are self-injective algebras such that
$D^b(A\text{-}\mathrm{mod})$ and $D^b(B\text{-}\mathrm{mod})$ are equivalent as triangulated categories, i.e.,
$A$ and $B$ are derived equivalent. Then $A$ and $B$ are stably equivalent.
\end{thm}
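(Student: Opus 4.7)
The plan is to reduce the theorem to the classical identification of the stable module category of a self-injective algebra with a suitable Verdier quotient of its bounded derived category. Specifically, for any self-injective finite-dimensional $\bR$-algebra $A$, I would first establish a triangulated equivalence
$$ A\text{-}\underline{\mathrm{mod}} \ \simeq \ D^b(A\text{-mod})\big/K^b(A\text{-proj}),$$
where the right-hand side is the Verdier quotient of the bounded derived category by the thick subcategory of perfect complexes (which for a self-injective algebra coincides with bounded complexes of projective-injective modules). The composite $A\text{-mod}\hookrightarrow D^b(A\text{-mod})\to D^b(A\text{-mod})/K^b(A\text{-proj})$ sends projective modules to zero and hence factors through the stable category, producing the comparison functor; fully faithfulness is shown by interpreting morphisms in the quotient as stable morphisms via complete projective resolutions (whose existence uses self-injectivity), and essential surjectivity is obtained by showing that any bounded complex becomes isomorphic, after passing to the quotient, to a stalk complex via brutal truncations and induction on the length.

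Second, given a triangulated equivalence $F\colon D^b(A\text{-mod})\to D^b(B\text{-mod})$, I would verify that $F$ restricts to an equivalence $K^b(A\text{-proj})\simeq K^b(B\text{-proj})$. Perfect complexes admit an intrinsic triangulated characterization, for example as the compact objects of the unbounded derived category, or equivalently as those objects $X$ for which $\Hom(X,-)$ satisfies appropriate finiteness; any triangulated equivalence automatically respects such intrinsic subcategories. Then, by the universal property of Verdier quotients, $F$ descends to a triangulated equivalence of quotients, which via Step~1 and Proposition~\ref{self-injectivity} becomes a triangulated, hence in particular additive, equivalence $A\text{-}\underline{\mathrm{mod}}\simeq B\text{-}\underline{\mathrm{mod}}$, that is, a stable equivalence in the sense required by Theorem~\ref{Thm: Krause}.

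The main obstacle is Step~1, especially the fully-faithfulness half. One needs to show that any morphism in the Verdier quotient, represented abstractly as a roof $M\leftarrow Z\to N[n]$ with mapping cone in $K^b(A\text{-proj})$, can be rewritten as a genuine module homomorphism modulo factorizations through projectives. This is precisely the place where self-injectivity enters essentially: the existence of two-sided complete projective resolutions, together with the equality $\mathrm{proj}=\mathrm{inj}$, forces the roofs to collapse to ordinary stable morphisms and identifies $\Hom$ in the quotient with Tate-type $\Hom$ in the stable category. Without self-injectivity the stable module category is not naturally triangulated and the whole strategy collapses; with it, Steps~2 and~3 are formal and Rickard's theorem follows.
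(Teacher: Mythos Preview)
The paper does not supply a proof of this theorem at all: it is quoted verbatim as Rickard's result \cite[Thm.~2.1]{Ri89} and used as a black box. Your outline is essentially the standard argument due to Rickard (and Keller--Vossieck, Buchweitz in the Gorenstein setting): identify the stable module category of a self-injective algebra with the Verdier quotient $D^b(A\text{-mod})/K^b(A\text{-proj})$, then observe that a derived equivalence preserves perfect complexes intrinsically and hence descends to the quotients. So there is nothing to compare against the paper; your sketch is simply a correct outline of the cited theorem's proof.

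One small slip: you invoke Proposition~\ref{self-injectivity} in Step~3, but that proposition concerns the specific algebras $R^{\Lambda}(\beta)$, whereas the theorem you are proving is stated for arbitrary self-injective $A$ and $B$, and self-injectivity is already part of the hypothesis. Drop that reference.
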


Hence, if two self-injective algebras are derived equivalent, they have the same number of irreducible modules, the same representation type,
isomorphic 
centers, and if they are not radical square zero Nakayama algebras then
isomorphic stable Auslander-Reiten quivers.

\begin{thm} [\protect{\cite[Thm.6.4]{CR08}}] \label{Thm: Chuang and Rouquier}
Let ${\mathcal A}$ be an artinian and noetherian $\bR$-linear abelian category
such that the endomorphism ring of any simple object is $\bR$.
Suppose that an adjoint pair $(E,F)$ of exact endo-functors on $\mathcal A$ satisfies $sl_2$-categorification axioms in \cite[5.2]{CR08}. Then
\begin{itemize}
\item[(1)] The weight space decomposition of the locally finite $sl_2(\Q)$-module $K({\mathcal A})\otimes_\Z\Q$ gives the decomposition of the category
${\mathcal A}=\oplus_{c\in\Z}\; {\mathcal A}_c.$
\item[(2)] The categories ${\mathcal A}_{-c}$ and ${\mathcal A}_c$ are derived equivalent.
\end{itemize}
\end{thm}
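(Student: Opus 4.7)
The plan is to follow the Chuang--Rouquier strategy: reduce the statement to the construction of an explicit complex of functors --- the \emph{Rickard complex} --- that realizes the derived equivalence between $\mathcal{A}_c$ and $\mathcal{A}_{-c}$.

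For part (1), the $sl_2$-categorification axioms ensure that on $K(\mathcal{A}) \otimes_{\Z} \Q$ the classes $[E]$ and $[F]$ satisfy the $sl_2$-relations and that the induced operator $h$ acts semisimply with integer eigenvalues. Since $\mathcal{A}$ is artinian and noetherian with $\End(L) = \bR$ on every simple $L$, each simple object acquires a well-defined weight $c(L) \in \Z$ from its class in $K(\mathcal{A})$. One then declares $\mathcal{A}_c$ to be the full Serre subcategory spanned by the simples of weight $c$. The axioms force $E$ (resp.\ $F$) to raise (resp.\ lower) this weight by $2$, whence $\mathcal{A} = \bigoplus_{c \in \Z} \mathcal{A}_c$ is the desired block decomposition compatible with $E$ and $F$.

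For part (2), fix $c \ge 0$ (the case $c \le 0$ is symmetric). The axioms include a nil affine Hecke algebra action on $E^n$ and $F^n$, from which one constructs divided-power functors $E^{(n)}$ and $F^{(n)}$. Define the \emph{Rickard complex}
\[
\Theta_c \;=\; \bigl( \cdots \longrightarrow F^{(c+2)} E^{(2)} \longrightarrow F^{(c+1)} E \longrightarrow F^{(c)} \bigr)
\]
of functors $\mathcal{A}_c \to \mathcal{A}_{-c}$, with differentials assembled from the unit and counit of the biadjunction $(E, F)$. The key tasks are then: (i) show that on each object the complex has bounded support, using the locally finite $sl_2$-action on $K(\mathcal{A})$; (ii) verify that $\Theta_{-c} \circ \Theta_c$ is homotopy equivalent to the identity, via a categorified version of the divided-power identities $E^{(a)} F^{(b)} = \sum \cdots$ in $U(sl_2)$; (iii) conclude via Rickard's Morita theorem that $\Theta_c$ is a two-sided tilting complex inducing $D^b(\mathcal{A}_c) \simeq D^b(\mathcal{A}_{-c})$.

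The main obstacle is step (ii): establishing the homotopy equivalence $\Theta_{-c} \circ \Theta_c \simeq \mathrm{id}$. This requires intricate combinatorial identities among compositions of divided powers, together with spectral-sequence or explicit-homotopy arguments to collapse the resulting double complex into a single copy of the identity functor. It is here that the nil affine Hecke action built into the $sl_2$-categorification axioms is indispensable, since it supplies precisely the morphisms that witness the contractibility of all terms away from the diagonal. Once the homotopy equivalence is in hand, self-duality of $\Theta_c$ (coming from the biadjointness of $E$ and $F$) gives the equivalence in both directions, and the conclusion on derived equivalence follows formally.
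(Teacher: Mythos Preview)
The paper does not prove this theorem at all; it is quoted verbatim from Chuang--Rouquier \cite[Thm.~6.4]{CR08} and used as a black box. Your outline is a reasonable high-level sketch of the original Chuang--Rouquier argument, so in that sense you are aligned with the paper's approach, which is simply to cite the source.

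Two technical points on the sketch itself. First, the axioms in \cite[5.2]{CR08} are formulated for the (degenerate or $q$-deformed) affine Hecke algebra acting on powers of $E$, not the nil affine Hecke algebra; the nil case enters only through Rouquier's 2-Kac--Moody formalism \cite[5.3.3]{R08}, which is exactly how the present paper verifies the axioms in Theorem~\ref{Kashiwara11}. Second, in \cite{CR08} the invertibility of the Rickard complex $\Theta$ is not obtained by directly checking $\Theta_{-c}\circ\Theta_c\simeq\mathrm{id}$ via divided-power identities as you suggest in step~(ii); rather, Chuang and Rouquier reduce by d\'evissage to \emph{minimal categorifications} (essentially a single $sl_2$-string, modelled on blocks of symmetric groups), where the equivalence is verified by hand, and then bootstrap to the general case by induction on the length of a filtration by such pieces. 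Your step~(ii) as written is closer in spirit to later proofs (e.g.\ in the 2-representation-theoretic literature) than to the original argument being cited.
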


We consider ${\mathcal A}=\oplus_{c\in\Z} {\mathcal A}_c$, where
$${\mathcal A}_c=\bigoplus_{\mu\in\wlP:\mu(h_i)=c} R^\Lambda(\Lambda-\mu)\text{-}\mathrm{mod}.$$
Then, $(E_i, F_i)$ is an adjoint pair of exact endo-functors on  $\mathcal A$. The following is another consequence of \cite{Kash11}.

\begin{thm}\label{Kashiwara11}
For $i\in I$, the pair of functors $(E_i, F_i)$ satisfies the $sl_2$-categorification axioms.
\end{thm}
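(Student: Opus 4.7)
The plan is to verify the axioms of \cite[\S 5.2]{CR08} for the endomorphisms $(E_i,F_i)$ of $\mathcal{A}=\bigoplus_{c\in\Z}\mathcal{A}_c$. The category $\mathcal{A}$ is artinian, noetherian and $\bR$-linear, and the endomorphism ring of any simple object is $\bR$ because $\bR$ is algebraically closed. Exactness of $E_i$ and $F_i$ is \cite[Thm.4.5]{KK11}, and biadjointness of the pair $(E_i,F_i)$ is the content of \cite[Thm.3.5]{Kash11} combined with \cite[\S 3.2]{Kash11}, already recalled above.

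The next step is to exhibit the affine (nil-)Hecke data. On the component $F_i:\KLR(\beta)\text{-mod}\to \KLR(\beta+\alpha_i)\text{-mod}$, which is given by tensoring with the bimodule $\KLR(\beta+\alpha_i)e(\beta,i)$, I define $X\in\End(F_i)$ to be left multiplication by $x_{|\beta|+1}$, and on $F_i^{\,2}$, viewed via $\KLR(\beta+2\alpha_i)e(\beta,i,i)$, I define $T\in\End(F_i^{\,2})$ to be left multiplication by $\psi_{|\beta|+1}$. Because these endomorphisms act only on strands labelled by $i$, I can read off their relations directly from the defining relations of $\KLR(n)$ specialised with $\nu_k=\nu_{k+1}=i$ (and $\nu_{k+2}=i$ for the braid relation), where the polynomial $\mathcal{Q}_{i,i}(u,v)=0$ makes several terms vanish. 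This yields $T^{2}=0$, the braid relation $(T\otimes 1)(1\otimes T)(T\otimes 1)=(1\otimes T)(T\otimes 1)(1\otimes T)$, and the commutation relations $TX_{1}=X_{2}T-1$, $TX_{2}=X_{1}T+1$ on $F_i^{\,2}$, i.e.\ exactly the relations of the degenerate affine nil-Hecke algebra required by \cite[\S5.2]{CR08}. Local nilpotence of $X$ on any object of $\mathcal{A}$ is immediate from the fact that every $x_k$ acts nilpotently on a finite-dimensional $\KLR(n)$-module, recorded in the lemma following the definition of $R^{\Lambda}(n)$.

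The remaining axiom is that the action of $([E_i],[F_i])$ on $K_0(\mathcal{A})\otimes_{\Z}\Q$ is a locally finite $\mathfrak{sl}_2(\Q)$-module with weight decomposition $\bigoplus_{c\in\Z}K_0(\mathcal{A}_c)\otimes\Q$. This is exactly the restriction to the $\mathfrak{sl}_2$-triple $\langle e_i,f_i,h_i\rangle\subset\mathfrak{g}$ of the $U_{\A}(\mathfrak{g})$-module isomorphism $K_0^{\Z}(R^{\Lambda_0})\cong V_{\A}(\Lambda_0)$ of Theorem \ref{Thm: categorification V} (specialised at $q=1$, as noted after the theorem). Since $V(\Lambda_0)$ is an integrable $\mathfrak{g}$-module, its restriction to $\mathfrak{sl}_2(\Q)$ is integrable, hence locally finite, and its $h_i$-eigenspace with eigenvalue $c$ is precisely $K_0(\mathcal{A}_c)\otimes\Q$ by the definition of $\mathcal{A}_c$.

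The main obstacle is really a bookkeeping one: matching the exact form of the $\mathfrak{sl}_2$-categorification axioms (degenerate versus non-degenerate affine Hecke, and the sign conventions in the commutation relations $TX_k=X_{k\pm 1}T\pm 1$) to the precise form of the defining relations of $\KLR(n)$. Once this matching is pinned down, everything reduces to the two inputs above, namely the Kang--Kashiwara biadjointness and exactness, and the Kang--Kashiwara cyclotomic categorification theorem.
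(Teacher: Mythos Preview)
Your verification is essentially correct and considerably more detailed than the paper's two-line proof, but the two approaches diverge at exactly the point you flag as ``bookkeeping.'' The paper simply cites \cite[3.2]{Kash11} for the nil affine Hecke action on $E_i^{\otimes 2}$ and then invokes \cite[5.3.3]{R08}, which is the result that a nil affine Hecke datum (with $T^2=0$) yields an $\mathfrak{sl}_2$-categorification in the sense of \cite[5.2]{CR08}. You construct the same data directly on $F_i,F_i^2$ from the KLR generators $x_{|\beta|+1},\psi_{|\beta|+1}$ and correctly verify the nil relations, but the original Chuang--Rouquier axioms in \cite[5.2]{CR08} are stated for a \emph{genuine} (degenerate or quantum) affine Hecke action, with $T$ satisfying a quadratic relation with two distinct roots, not $T^2=0$. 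Passing from the nil setting to the Chuang--Rouquier setting is not a sign convention; it is the content of Rouquier's \cite[5.3.3]{R08}, and that is precisely what the paper cites.

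So your argument is complete once you replace the final paragraph by an appeal to \cite[5.3.3]{R08}. The extra ingredients you supply---nilpotence of $X$ from the graded structure, and local finiteness of the $\mathfrak{sl}_2$-action via Theorem~\ref{Thm: categorification V}---are correct and make explicit what the paper leaves implicit in its citation. The choice to build $X,T$ on $F_i$ rather than $E_i$ is immaterial; the two are exchanged by adjunction.
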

\begin{proof}
It follows from \cite[3.2]{Kash11} that we have $X_i, X_{i+1}, T\in\End(E_i^{\otimes2})$ that satisfy nil affine Hecke relations.
Thus \cite[5.3.3]{R08} implies the result.
\end{proof}

\begin{cor}
\label{cor: Chuang-Rouquier}
Let $\Lambda$ be an integral dominant weight, $V(\Lambda)$ the corresponding integrable highest weight $\g$-module.
If $ \mu \in \wlP $ appears in its weight system, that is, if the weight space $V(\Lambda)_\mu$ is nonzero, then
$R^{\Lambda}(\Lambda - \mu )$ and $R^{\Lambda}(\Lambda - w\mu )$ are derived equivalent, for $w\in\weyl$.
\end{cor}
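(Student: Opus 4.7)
The plan is to reduce to the case of a simple reflection $w=r_i$ by induction on the length of $w\in\weyl$. Writing $w=r_{i_k}\cdots r_{i_1}$ as a reduced expression and setting $\mu_j=r_{i_j}\cdots r_{i_1}\mu$, each $\mu_j$ lies in the weight system of $V(\Lambda)$ since the set of weights is $\weyl$-stable. It therefore suffices to establish a derived equivalence $R^{\Lambda}(\Lambda-\mu_{j-1})\sim R^{\Lambda}(\Lambda-r_{i_j}\mu_{j-1})$ at each step, and to compose them. The base case $w=1$ is trivial, so from now on assume $w=r_i$ for some $i\in I$.

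Fix $i\in I$ and set $c=\langle h_i,\mu\rangle\in\Z$. With the decomposition ${\mathcal A}=\oplus_{c\in\Z}{\mathcal A}_c$ of the paragraph preceding Theorem \ref{Kashiwara11}, namely
${\mathcal A}_c=\bigoplus_{\mu':\,\langle h_i,\mu'\rangle=c}R^{\Lambda}(\Lambda-\mu')\text{-mod}$,
Theorem \ref{Kashiwara11} shows that $(E_i,F_i)$ endows ${\mathcal A}$ with an $sl_2$-categorification structure. Theorem \ref{Thm: Chuang and Rouquier}(2) then supplies a derived equivalence
$\Theta\colon D^b({\mathcal A}_{c})\xrightarrow{\sim} D^b({\mathcal A}_{-c})$.
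Since $r_i\mu=\mu-c\alpha_i$ satisfies $\langle h_i,r_i\mu\rangle=-c$, the block $R^{\Lambda}(\Lambda-r_i\mu)\text{-mod}$ is a summand of ${\mathcal A}_{-c}$ and $R^{\Lambda}(\Lambda-\mu)\text{-mod}$ is a summand of ${\mathcal A}_{c}$.

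The remaining point is to verify that $\Theta$ respects this finer decomposition and, specifically, carries the summand at $\mu$ to the summand at $r_i\mu$. This uses the explicit form of the Rickard complex underlying $\Theta$: it is assembled from divided-power functors $F_i^{(a)}$ and $E_i^{(b)}$, each of which modifies $\beta$ only in its $\alpha_i$-component (by $+a\alpha_i$ and $-b\alpha_i$ respectively). Consequently, starting from a module in $R^{\Lambda}(\Lambda-\mu)\text{-mod}$ and landing in ${\mathcal A}_{-c}$ forces every term of $\Theta$ to lie in $R^{\Lambda}(\Lambda-\mu+c\alpha_i)\text{-mod}=R^{\Lambda}(\Lambda-r_i\mu)\text{-mod}$, because only the $\alpha_i$-coefficient of the labelling root can be altered. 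This gives the required derived equivalence.

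The main obstacle is this last verification: Theorem \ref{Thm: Chuang and Rouquier} as quoted only asserts a derived equivalence of the whole weight-space categories ${\mathcal A}_{c}$ and ${\mathcal A}_{-c}$, whereas we need that the equivalence splits along the finer direct sum decomposition indexed by individual weights $\mu'$. This refinement, while not made explicit in the quoted statement, is immediate from the shape of the Chuang--Rouquier Rickard complex combined with the fact that $F_i$ and $E_i$ act between the summands $R^{\Lambda}(\Lambda-\mu')\text{-mod}$ and $R^{\Lambda}(\Lambda-\mu'\pm\alpha_i)\text{-mod}$ one at a time. With this observation in hand, induction on $\ell(w)$ completes the proof.
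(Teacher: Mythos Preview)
Your proof is correct and follows essentially the same route as the paper: reduce to a simple reflection using $\weyl$-invariance of the weight system, then invoke Theorem~\ref{Kashiwara11} and Theorem~\ref{Thm: Chuang and Rouquier}. The paper's proof is terser and does not spell out the block-by-block refinement you flag as the ``main obstacle''; your observation that the Rickard complex is built from powers of $E_i,F_i$ (which alter only the $\alpha_i$-component of $\beta$) is exactly the right way to justify this point, and it is implicit in the paper's citation of \cite{CR08}.
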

\begin{proof}
As the weight system of an integrable module is $\weyl$-invariant, it suffices to consider the case $w=s_i$, for some $i\in I$.
Then, the result follows from Theorem \ref{Thm: Chuang and Rouquier} and Theorem \ref{Kashiwara11}.
\end{proof}

The following corollary is crucial in reducing the proof of Theorem \ref{Thm: repn type of R(beta)} to the cases when $\beta$ is a multiple of the null root $\delta$.

\begin{cor} \label{Cor: repn type}
Let $\Lambda$ be a dominant integral weight. Then, for $w\in \weyl$ and $k \in \Z_{\ge0}$, cyclotomic quiver Hecke algebras
$R^{\Lambda}(k\delta)$ and $R^{\Lambda}(\Lambda - w \Lambda + k\delta)$ have the same representation type.
\end{cor}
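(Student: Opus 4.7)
The plan is to chain together the three preceding results. The key identity to start from is
\begin{equation*}
\Lambda - w(\Lambda - k\delta) \;=\; \Lambda - w\Lambda + k\delta,
\end{equation*}
which uses only that the null root is $\weyl$-invariant, i.e.\ $w\delta = \delta$ (recorded in Section \ref{Cartan datum}). Setting $\mu := \Lambda - k\delta$, one has $\Lambda - \mu = k\delta$ and $\Lambda - w\mu = \Lambda - w\Lambda + k\delta$, so the two algebras in the statement are precisely $R^\Lambda(\Lambda-\mu)$ and $R^\Lambda(\Lambda-w\mu)$ for this choice of $\mu$.

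The first step is to verify the hypothesis needed to invoke Corollary \ref{cor: Chuang-Rouquier}, namely that $\mu = \Lambda - k\delta$ lies in the weight system of $V(\Lambda)$. This is standard: since $\langle h_i,\delta\rangle = 0$ for every $i\in I$, we have $\langle h_i,\Lambda - k\delta\rangle = \langle h_i,\Lambda\rangle \ge 0$, so $\Lambda - k\delta \in \wlP^+$ and lies in the principal string of $V(\Lambda)$, giving $V(\Lambda)_{\Lambda - k\delta}\neq 0$ for all $k\in\Z_{\ge 0}$.

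With the hypothesis confirmed, Corollary \ref{cor: Chuang-Rouquier} yields a derived equivalence between $R^\Lambda(k\delta)$ and $R^\Lambda(\Lambda - w\Lambda + k\delta)$. Proposition \ref{self-injectivity} asserts that both algebras are self-injective, so Theorem \ref{Thm: Rickard} upgrades this to a stable equivalence, and Theorem \ref{Thm: Krause} then implies that the two algebras have the same representation type. There is no real obstacle; the statement is essentially a bookkeeping exercise that combines the Chuang--Rouquier reflection functor (imported into the quiver Hecke setting via Theorem \ref{Kashiwara11}) with the standard chain of implications derived equivalent $+$ self-injective $\Rightarrow$ stably equivalent $\Rightarrow$ equal representation type.
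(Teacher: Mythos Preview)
Your proof is correct and follows essentially the same route as the paper's own argument: use $w\delta=\delta$ to recognize the two algebras as $R^\Lambda(\Lambda-\mu)$ and $R^\Lambda(\Lambda-w\mu)$ for $\mu=\Lambda-k\delta$, apply Corollary~\ref{cor: Chuang-Rouquier} to get a derived equivalence, then combine Proposition~\ref{self-injectivity}, Theorem~\ref{Thm: Rickard}, and Theorem~\ref{Thm: Krause}. Your added verification that $\Lambda-k\delta$ is actually a weight of $V(\Lambda)$ is a point the paper leaves implicit; note, however, that merely having $\langle h_i,\Lambda-k\delta\rangle\ge0$ does not by itself guarantee nonvanishing of the weight space---what you really need is the standard fact for affine algebras that $\Lambda-k\delta$ lies in the weight system of $V(\Lambda)$ for every $k\ge0$ (e.g.\ from the Weyl--Kac character formula or the imaginary-root string through $\Lambda$).
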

\begin{proof}
Since $w \delta= \delta$, Corollary \ref{cor: Chuang-Rouquier} tells that
$R^{\Lambda}(k\delta)$ and $R^{\Lambda}(\Lambda - w \Lambda + k\delta)$ are derived equivalent. Then,
Proposition \ref{self-injectivity} and Theorem \ref{Thm: Rickard} imply that they are stably equivalent.
Thus, the assertion follows from Theorem \ref{Thm: Krause}.
\end{proof}

In the rest of the paper, we focus on finite quiver Hecke algebras $\KLR(\beta)$ of type $A^{(2)}_{2\ell}$. Then, every nonzero
$\KLR(\beta)$ has the form $\KLR(\Lambda_0-w\Lambda_0+k\delta)$, which has the same representation type as
$\KLR(k\delta)$ by Corollary \ref{Cor: repn type}.

\vskip 1em

We recall the following theorem. Let $\Rad \, A$ be the two-sided ideal generated by paths of positive length, for a path algebra $A$.

\begin{thm}[\protect{\cite[Thm.1]{GR79}}]\label{Brauer tree algebra}
Brauer tree algebras are stably equivalent to symmetric Nakayama algebras $A/\Rad^{lm+1}A$, for the path algebra $A$ of a cyclic quiver of length $l$
and $m\in\Z_{\geq0}$.
\end{thm}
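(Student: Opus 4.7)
The plan is to reduce an arbitrary Brauer tree algebra to the Brauer tree algebra of a star via a sequence of tilting-theoretic derived equivalences, and then invoke Rickard's theorem (Theorem~\ref{Thm: Rickard}) to upgrade derived equivalence to stable equivalence. Recall that a Brauer tree datum $(T, v^*, m)$ consists of a tree $T$ with $l$ edges, a cyclic order of edges at each vertex, an exceptional vertex $v^*$, and multiplicity $m \geq 1$. The associated Brauer tree algebra $B(T,m)$ has $l$ isomorphism classes of simple modules indexed by the edges of $T$, and the Loewy structure of each indecomposable projective $P_e$ is determined by the cyclic walks around the two endpoints of $e$. These algebras are always symmetric, in particular self-injective. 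The degenerate case $m=0$ is trivial, as both sides become semisimple.

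The main step is to show that $B(T,m)$ is derived equivalent to $B(T',m)$ whenever $T'$ is obtained from $T$ by an elementary Okuyama--Rickard flip at one edge. To do this I would construct an explicit two-term tilting complex: fix an edge $e$ of $T$ not adjacent to $v^*$ and let $f$ be the neighbouring edge at the non-exceptional endpoint of $e$ in the cyclic order. The candidate complex consists of $P_j$ placed in degree $0$ for $j \neq e$, together with a two-term complex $P_e \to P_f$ whose differential is induced by the corresponding arrow of the Gabriel quiver of $B(T,m)$. Checking that the total complex is tilting (vanishing of positive self-extensions and generation of the perfect derived category) and identifying its endomorphism ring with $B(T',m)$ reduces to a Loewy-level analysis of $\mathrm{Hom}$ spaces between the summands.

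Iterating such flips transforms $T$ into the star $S$ with all $l$ edges attached at $v^*$. For the star, walking $m$ times around the cyclic order at $v^*$ yields the Loewy series of every $P_e$, which is uniserial of length $lm$ with composition factors cycling through all $l$ simples exactly $m$ times; this is precisely the description of the indecomposable projectives of $A/\Rad^{lm+1}A$, where $A$ is the path algebra of the cyclic quiver of length $l$. Hence $B(S,m) \cong A/\Rad^{lm+1}A$ as $\bR$-algebras. Composing the derived equivalences from the iterated flips with this isomorphism gives a derived equivalence $B(T,m) \to A/\Rad^{lm+1}A$, and since both algebras are self-injective, Theorem~\ref{Thm: Rickard} yields the desired stable equivalence.

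The hard part will be the tilting step: constructing the two-term complex that implements a single flip and verifying that its endomorphism ring matches the combinatorics of the flipped tree. This is essentially the content of Rickard's treatment of Brauer tree algebras, and the verification requires careful bookkeeping of the cyclic walks around the vertices of $T$; the delicate cases are those where $e$ or $f$ is adjacent to the exceptional vertex $v^*$, since the multiplicity $m$ then enters nontrivially into the composition factors of the projectives involved.
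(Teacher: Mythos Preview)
The paper does not actually prove this statement; it is quoted as a known result from Gabriel--Riedtmann \cite{GR79} and used without further argument. So there is no ``paper's own proof'' to compare against here.

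Your route via two-term tilting complexes and Rickard's theorem is correct in outline and is essentially the argument Rickard himself gave later (1989). It is worth noting, however, that this cannot be the proof in \cite{GR79}: that paper predates derived equivalences and tilting complexes by a decade. Gabriel and Riedtmann construct the stable equivalence directly, by building explicit exact functors between the module categories (using the ``walk around the Brauer tree'' description of the indecomposables) and checking that they induce inverse equivalences on the stable categories. Your approach is cleaner conceptually once one has Theorem~\ref{Thm: Rickard} available, and it has the bonus of giving a derived equivalence rather than merely a stable one; the original argument is more elementary in that it needs no triangulated-category machinery, only the combinatorics of Brauer trees and Green's description of their indecomposable modules.

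One small correction: in your final paragraph you say the uniserial projective has length $lm$, but with $\mathrm{top}=\mathrm{soc}$ it should have length $lm+1$, matching the exponent in $A/\Rad^{lm+1}A$.
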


Thus, Brauer tree algebras have finite representation type by Theorem \ref{Thm: Krause}.
Of course, the fact had been known before Theorem \ref{Brauer tree algebra} appeared, but we do not know a good reference for this well-known fact.
According to Professor Asashiba,
arguments in Janusz \cite{Ja69} may be read as another proof, which is more direct and elementary. We are grateful to him for the explanation.

\vskip 1em

\subsection{The algebra $R^{\Lambda_0}(\delta)$}
In this subsection, we prove that $\KLR(\delta)$ is a Brauer tree algebra. Hence, it has finite representation type by Theorem \ref{Brauer tree algebra}.
We assume that $Q_{i,j}(u,v)=1$ if $a_{ij} = 0$ for simplifying computations. This assumption is not essential. Recall that $h=2\ell+1$.

For $0\le i\le\ell-1$, we consider two row partitions $ \lambda(i) = (h-i-1, i) \vdash h-1$.  Note that
$\wt(\lambda(i)) = \Lambda_0-\delta+\alpha_i$ and the residues are given as follows:
\begin{align} \label{Eq: residue of lambda i}
\xy
(0,6)*{};(60,6)*{} **\dir{-};
(0,0)*{};(60,0)*{} **\dir{-};
(6,-6)*{};(28,-6)*{} **\dir{-};
(0,0)*{};(0,6)*{} **\dir{-};
(6,-6)*{};(6,6)*{} **\dir{-};
(12,-6)*{};(12,6)*{} **\dir{-};
(22,-6)*{};(22,6)*{} **\dir{-};
(28,-6)*{};(28,6)*{} **\dir{-};
(38,0)*{};(38,6)*{} **\dir{-};
(44,0)*{};(44,6)*{} **\dir{-};
(54,0)*{};(54,6)*{} **\dir{-};
(60,0)*{};(60,6)*{} **\dir{-};
(3.2,3)*{_0}; (9.2,3)*{_1}; (17,3)*{\cdots}; (25.2,3)*{_i}; (33,3)*{\cdots}; (41.2,3)*{_\ell}; (49,3)*{\cdots}; (57.5,3)*{_{i+1}};
 (9.2,-3)*{_0}; (17,-3)*{\cdots}; (25.5,-3)*{_{i-1}};
\endxy
\end{align}

 We set
$$L_i = \bigoplus_{ T \in \ST(\lambda(i)) }  \bR T  $$
and define an $\KLR(\delta - \alpha_i)$-module structure on $L_i$ as follows: for $\nu \in I^{\delta - \alpha_i},\
1\le k\le h-1$ and $1\le l<h-1,$
\begin{align} \label{Eq: def of Li}
e(\nu) T = \left\{
             \begin{array}{ll}
               T & \hbox{ if } \nu = \res(T), \\
               0 & \hbox{ otherwise,}
             \end{array}
           \right. \
x_k T = 0, \ \ \
\psi_l T = \left\{
             \begin{array}{ll}
               s_l T & \hbox{ if $s_l T$ is standard,}  \\
               0 & \hbox{ otherwise,}
             \end{array}
           \right.
\end{align}
where $s_l T$ is the tableau obtained from $T$ by exchanging the entries $l$ and $l+1$.
We check that it is well-defined. It is easy to see from the residue pattern $\eqref{Eq: residue of lambda i}$
that $\nu_k=\nu_{k+1}$ does not occur, for $\nu=\res(T)$ with $T\in \ST(\lambda(i))$.
Thus, it is straightforward to check the defining relations except those for $\psi_k^2$ and for
$\psi_{k+1}\psi_k\psi_{k+1}-\psi_k\psi_{k+1}\psi_k$.

Let $T \in \ST(\lambda(i))$ and write $\res(T) = (\nu_1, \ldots, \nu_{h-1})$.
Since $0\le i \le \ell-1$, one can also show from the residue pattern $\eqref{Eq: residue of lambda i}$ that
\begin{enumerate}
\item[(i)]Let $(1,j)\in\lambda(i)$ and $(2,j')\in\lambda(i)$ be such that $ j > j' $. Then $a_{pq}=0$, for $p=\res(1,j)$ and $q=\res(2,j')$.
\item[(ii)] If $\nu_{k} = \nu_{k+2}$, then $(\nu_{k}, \nu_{k+1}, \nu_{k+2}) = (0,1,0)$ or $(\ell-1,\ell,\ell-1)$.
\end{enumerate}

We show that $\psi_k^2e(\nu)=e(\nu)$ if $a_{\nu_k,\nu_{k+1}}=0$, and $\psi_k^2e(\nu)=0$ otherwise.
Let $\nu=\res(T)$, for $T\in \ST(\lambda(i))$. Since
(i) implies that $s_k T$ is standard if and only if $a_{\nu_k, \nu_{k+1}} =0$, we have
$$ \psi_k^2 T = \left\{
                                        \begin{array}{ll}
                                          T & \hbox{ if  $a_{\nu_k,\nu_{k+1}}=0$,} \\
                                          0 & \hbox{ otherwise.}
                                        \end{array}
                                      \right.
 $$
Thus, we have proved that $\psi_k^2e(\nu)=Q_{\nu_k,\nu_{k+1}}(x_k,x_{k+1})e(\nu)$ holds on $L_i$.

As $s_kT, s_{k+1}s_kT, s_ks_{k+1}s_kT$ are all standard if and only if $s_{k+1}T, s_ks_{k+1}T, s_{k+1}s_ks_{k+1}T$ are all standard, the operators
$\psi_k$ and $\psi_{k+1}$ on $L_i$ satisfy the Artin braid relation. Thus, we show that the terms with $\nu_k=\nu_{k+2}$ in the defining relation vanish.
Since (ii) implies that we only have to consider $(\nu_{k}, \nu_{k+1}, \nu_{k+2}) = (0,1,0)$ or $(\ell-1,\ell,\ell-1)$, and
$\deg(\psi_k\psi_{k+1}\psi_{k} e(\nu)) > 0 $ in both cases, they vanish as desired.

\begin{lemma} \label{Lem: Li} For $i = 0,1,\ldots, \ell-1$,
\begin{enumerate}
\item  $\KLR(\delta-\alpha_i)$ is a simple algebra,
\item $L_i$ is an irreducible $\KLR(\delta-\alpha_i)$-module of dimension ${ h-2\choose i} - { h-2\choose i-1}$.
\end{enumerate}
\end{lemma}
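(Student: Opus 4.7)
The plan is to use the dimension formula (Theorem \ref{Thm: dimension formula}) together with Burnside density, deducing both statements in tandem. First, $\dim L_i = |\ST(\lambda(i))|$ by construction, and applying the shifted hook length formula (Theorem \ref{Thm: hook formula}(1)) to $\lambda(i) = (h - i - 1, i)$ reduces after a short computation of hook products to $\dim L_i = \binom{h-2}{i} - \binom{h-2}{i-1}$.

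Next I would show that $\lambda(i)$ is the unique shifted Young diagram of size $h - 1$ with $\wt(\lambda) = \Lambda_0 - (\delta - \alpha_i)$. Since $|\lambda| = h - 1 < h$, every row of $\lambda$ has length strictly less than $h$, so by the residue pattern $\nu_\delta$ each row contributes exactly one residue-$0$ box (its leftmost box). Hence the $\alpha_0$-coefficient of $\Lambda_0 - \wt(\lambda)$ equals $l(\lambda)$, forcing $l(\lambda) = 1$ when $i = 0$ and $l(\lambda) = 2$ when $i \geq 1$; a quick inspection of the residue counts for 1- and 2-row strict shapes of size $h - 1$ singles out $\lambda = \lambda(i)$. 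Combined with the elementary identity $\langle d, \wt(\lambda(i)) \rangle + l(\lambda(i)) = 0$, Theorem \ref{Thm: dimension formula} gives
\begin{align*}
\dim \KLR(\delta - \alpha_i) = |\ST(\lambda(i))|^2 = (\dim L_i)^2.
\end{align*}

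The heart of the proof is to show $L_i$ is irreducible, which I would split into two substeps. (a) Distinct standard tableaux of $\lambda(i)$ have distinct residue sequences: at any step of building a SYT where both rows admit the next entry, the row-$1$ candidate has residue $c_1$ (if $c_1 \le \ell$) or $2\ell - c_1$ (otherwise) while the row-$2$ candidate has residue $c_2$, where $c_j$ counts row-$j$ entries already placed. The inequalities $c_1 \geq c_2 + 2$ (needed so the row-$2$ candidate has its upper column-neighbour filled) and $c_1 + c_2 \leq 2\ell - 1$ rule out both residue equalities, so the residue sequence determines the SYT, i.e.\ $K(\lambda(i), \nu) \in \{0, 1\}$ for all $\nu$. (b) The set $\ST(\lambda(i))$ is connected under the $\psi_l$-action: one verifies that $\psi_l T = s_l T$ is standard precisely when the entries $l$ and $l+1$ of $T$ lie in different rows and different columns, and identifying SYTs with ballot lattice paths from $(0,0)$ to $(h-i-1, i)$, these swaps realise the elementary toggles well known to connect all such paths. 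Combining (a) and (b), any nonzero $m \in L_i$ reduces via a suitable $e(\nu)$ to a scalar multiple of a single $T_0 \in \ST(\lambda(i))$, which can then be transported to every other tableau by a product of $\psi_l$'s. Thus $L_i$ is irreducible, Burnside density makes the structure map $\KLR(\delta - \alpha_i) \to \End_\bR(L_i) \cong M_{\dim L_i}(\bR)$ surjective, and comparing dimensions forces it to be an isomorphism, proving both (1) and (2). The main technical obstacle is the combinatorial connectivity in (b); step (a) is a short inequality, while (b) needs a careful case analysis of ballot paths for shifted two-row shapes.
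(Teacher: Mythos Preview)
Your proof is correct but follows a genuinely different route from the paper. The paper proves (1) first by Lie theory: $\Lambda_0-\delta+\alpha_i$ is a maximal weight of $V(\Lambda_0)$, hence lies in the Weyl orbit $\weyl\Lambda_0$, so Corollary~\ref{cor: Chuang-Rouquier} together with the invariance of the center under derived equivalence transports simplicity from $\KLR(0)=\bR$. With (1) already established, the paper's proof of (2) needs only your step~(a): since the algebra is a matrix algebra, $L_i\cong S^{\oplus m}$ for the unique irreducible $S$, and $\dim e(\nu)L_i=m\cdot\dim e(\nu)S\le 1$ forces $m=1$. You reverse the logic, proving irreducibility of $L_i$ directly via (a) and (b), and then deducing (1) from the dimension formula and Burnside. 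The paper's approach is shorter once the derived-equivalence machinery is in place and avoids the connectivity argument entirely; your approach is more elementary and self-contained, bypassing both the identification of $\Lambda_0-\delta+\alpha_i$ as an extremal weight and Corollary~\ref{cor: Chuang-Rouquier}. Incidentally, the connectivity in (b) that you call the main obstacle is quite mild for two-row shifted shapes: repeatedly swapping a row-$2$ entry $l$ with a row-$1$ entry $l+1$ (which, as you note, is always a valid $\psi_l$-move since the row-$1$ box lies strictly to the right, so the paper's observation (i) gives $a_{\nu_l\nu_{l+1}}=0$) drives every tableau to the canonical tableau $T^{\lambda(i)}$.
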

\begin{proof}
(1) Since $\Lambda_0 - \delta + \alpha_i$ is a maximal weight of $V(\Lambda_0)$,
$$ \dim V(\Lambda_0)_{\Lambda_0 - \delta + \alpha_i} = 1,\quad  \Lambda_0 - \delta + \alpha_i =  w \Lambda_0, $$
for some $w\in \weyl$ \cite[Lem.12.6]{Kac90}. Then, as $R^{\Lambda_0}(0)$ is a simple algebra, the assertion follows from  Corollary \ref{Cor: repn type} and the fact that derived equivalence preserves the center.

(2) Theorem \ref{Thm: hook formula} gives the dimension of $L_i$ as follows:
\begin{align*}
\dim L_i &= \frac{(h-1)!}{ (h-1) \left( \frac{(h-i-1)!}{h-2i-1} \right) i!   }
= \frac{(h-2)!}{ (h-i-2)!i! } - \frac{(h-2)!}{ (h-i-1)!(i-1)! }.
\end{align*}
Next we show that $\dim e(\nu)L_i\le 1$, for any $\nu$. Then, a standard argument shows that $L_i$ is irreducible. But, if one tries to enumerate all possible standard tableaux $T$ with $\res(T)=\nu$, one finds that it is unique if it exists. It follows that $\dim e(\nu)L_i\le 1$.
\end{proof}

We now extend the $R^{\Lambda_0}(\delta-\alpha_i)$-module $L_i$ to an $R^{\Lambda_0}(\delta)$-module $S_i$ as follows.
As the generators $x_k$, for $1\le k\le h-1$, and
$\psi_l$, for $1\le l<h-1$, act as \eqref{Eq: def of Li}, we define the action of $x_h$ and $\psi_{h-1}$. We declare that both act as $0$.
The idempotents  $e(\nu)$, for $\nu \in I^h$, act as
$$ e(\nu) T = \left\{
                \begin{array}{ll}
                  T & \hbox{ if } \nu = \res(T) * i, \\
                  0 & \hbox{ otherwise.}
                \end{array}
              \right.
 $$
Here, $\res(T) * i$ is the sequence obtained from $\res(T)$ by adding $i$ at the right end. We check that it is well-defined.
Let $\nu=\res(T) * i$. Then, it follows from $\eqref{Eq: residue of lambda i}$ that
$$ \nu_{h-2} \ne i \quad  \text{ and } \quad \nu_{h-1} = i-1 \text{ or } i+1,$$
if $\ell\ge2$, $\deg Q_{\nu_{h-2},\nu_{h-1}}\ge2$ and $\nu_{h-1} = i\pm1$ if $\ell=1$.
Then, $\nu_{h-1}=i\pm1$ implies that $\nu_{h-1}=\nu_h$ does not occur. Thus, the relations which involve $x_h$ all hold.
It also follows that $a_{\nu_{h-1}\nu_h}=0$ does not occur, so that the relation for $\psi_{h-1}^2$ holds.
Finally, $\nu_{h-2}\ne\nu_h$ if $\ell\ge2$ and $\deg Q_{\nu_{h-2},\nu_{h-1}}\ge2$ if $\ell=1$ implies that the relation for $\psi_{h-1}\psi_{h-2}\psi_{h-1}-\psi_{h-2}\psi_{h-1}\psi_{h-2}$.
Note that $S_i$ is a homogeneous representation in the sense of Kleshchev and Ram \cite{KR08}, and
\begin{align} \label{Eq: E of S}
E_j S_i =    \left\{
                \begin{array}{ll}
                  L_i & \hbox{ if } j = i, \\
                  0 & \hbox{ if } j \ne i.
                \end{array}
              \right.
\end{align}

\begin{lemma}
The set $\{ S_i \mid 0\le i\le \ell-1 \}$ is a complete set of irreducible $R^{\Lambda_0}(\delta)$-modules.
\end{lemma}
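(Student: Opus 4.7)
The plan is to combine two ingredients: (a) an enumeration of $|\mathcal{RP}_h(\delta)|$, which by Proposition \ref{Prop: the number of simples} equals the number of isomorphism classes of irreducible $\KLR(\delta)$-modules, and (b) verification that the $S_i$ ($0 \le i \le \ell-1$) are $\ell$ pairwise non-isomorphic irreducibles, so they exhaust the list.

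For step (a), I would show $|\mathcal{RP}_h(\delta)| = \ell$ by describing the partitions explicitly. Such a $\lambda$ satisfies $|\lambda| = h$ and its residue multiplicities match $\delta = 2\alpha_0 + \cdots + 2\alpha_{\ell-1} + \alpha_\ell$. Since each nonempty row contributes a node of residue $0$ (the leftmost node, by the residue pattern $\nu_\delta$) and the coefficient of $\alpha_0$ in $\delta$ is $2$, the partition $\lambda$ has at most two rows. The one-row candidate $\lambda = (h)$ is excluded by the $h$-restrictedness condition, since $h \mid h$ forces $\lambda_1 - \lambda_2 < h$ while here $\lambda_1 - \lambda_2 = h$. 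So $\lambda = (h-k, k)$ for some $k \ge 1$, and a direct bookkeeping along the pattern $\nu_\delta$ shows that the residue content matches $\delta$ precisely when $1 \le k \le \ell$. Both $h$-strictness and $h$-restrictedness hold automatically in this range, since $h = 2\ell+1$ is odd (so $h-k \ne k$) and both parts are less than $h$. Hence $|\mathcal{RP}_h(\delta)| = \ell$.

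For step (b), pairwise non-isomorphism is immediate from \eqref{Eq: E of S}: $E_j S_i = \delta_{ij} L_i$, so $S_i \cong S_j$ forces $L_i = E_i S_j \ne 0$, hence $i = j$. For irreducibility of $S_i$, I would show that submodules of $S_i$ over $\KLR(\delta)$ correspond to submodules of $L_i$ over $\KLR(\delta - \alpha_i)$ via $E_i$. The key observation is that every basis vector $T \in \ST(\lambda(i))$ of $S_i$ satisfies $e(\res(T) \ast i)\, T = T$, so $e(\delta - \alpha_i, i)$ acts as the identity on $S_i$, and therefore on any submodule $M \subseteq S_i$. This identifies $M = e(\delta-\alpha_i,i) M = E_i M$ as a vector space, and the identification is compatible with module structures. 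Since $L_i$ is irreducible by Lemma \ref{Lem: Li}(2), any nonzero $M$ gives $E_i M = L_i$, whence $M = S_i$.

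I expect the main obstacle to be the irreducibility step, where one must take care that the $\KLR(\delta)$-submodule structure of $S_i$ and the $\KLR(\delta-\alpha_i)$-submodule structure of $L_i$ match under $E_i$; the combinatorial count in step (a) is routine once one notices the residue-$0$ constraint that rules out partitions with three or more rows.
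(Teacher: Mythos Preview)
Your proposal is correct and follows essentially the same approach as the paper: the paper's proof simply cites Proposition~\ref{Prop: the number of simples} for the count $\ell$ and then invokes Lemma~\ref{Lem: Li}(2) together with \eqref{Eq: E of S} to conclude that the $S_i$ are irreducible and pairwise non-isomorphic. You have spelled out both steps in detail---the explicit enumeration of $\mathcal{RP}_h(\delta)$ and the observation that $e(\delta-\alpha_i,i)$ acts as the identity on $S_i$, which forces submodules of $S_i$ to correspond to submodules of $L_i$---whereas the paper leaves these verifications to the reader.
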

\begin{proof}
By Proposition \ref{Prop: the number of simples}, the number of pairwise non-isomorphic irreducible $R^{\Lambda_0}(\delta)$-modules is $\ell$.
On the other hand, Lemma \ref{Lem: Li} (2) and $\eqref{Eq: E of S}$ tell us that $S_i$'s are irreducible and pairwise non-isomorphic. Thus, we have the assertion.
\end{proof}

We now consider the module $F_i L_i$. Note that $F_i L_i$ is a projective-injective module since the functor $F_i$ preserves projectivity and injectivity.
By the biadjointness of $E_i$ and $F_i$,
\begin{equation} \label{Eq: Hom od FL}
\begin{aligned}
\Hom(S_j, F_iL_i) &\simeq
\Hom(E_i S_j, L_i) \simeq \left\{
\begin{array}{ll}
 \bR & \hbox{ if } j=i, \\
 0 & \hbox{ if } j\ne i,
\end{array}
\right.\\
\Hom(F_iL_i, S_j) &\simeq
\Hom( L_i, E_i S_j) \simeq \left\{
\begin{array}{ll}
 \bR & \hbox{ if } j=i, \\
 0 & \hbox{ if } j\ne i,
\end{array}
\right.
\end{aligned}
\end{equation}
which implies that $F_iL_i$ is indecomposable and $\Top(F_iL_i) = \Soc(F_iL_i) = S_i$.
So, $F_iL_i$ is the projective cover of $S_i$.

\begin{thm} If $\ell\ge2$, the radical series of $F_i L_i$, for $0\le i\le \ell-1$, is given as follows:
\begin{align*}
F_0 L_0 \simeq \begin{array}{c}  S_0 \\  S_0 \oplus S_1 \\  S_0 \end{array}, \ \ \ \
F_i L_i \simeq \begin{array}{c}  S_i \\  S_{i-1} \oplus S_{i+1} \\  S_i \end{array}  \;(i\ne 0,\ell-1),\ \ \ \
F_{\ell-1} L_{\ell-1} \simeq \begin{array}{c}  S_{\ell-1} \\  S_{\ell-2} \\  S_{\ell-1} \end{array}.
\end{align*}
If $\ell=1$, $S_0$ is the unique irreducible module and $F_0L_0$ is a uniserial module of length $3$.
\end{thm}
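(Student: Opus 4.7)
The plan is to proceed in two stages. First I compute the composition multiplicities $[F_iL_i : S_j]$ via adjunction; then I invoke self-duality of the projective-injective module $F_iL_i$ and enumerate the palindromic filtrations compatible with the resulting multiplicities.

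For the first stage, since $F_jL_j$ is the projective cover of $S_j$ and $\End(S_j) = \bR$, the biadjunction of $(E_j, F_j)$ gives
\[
[F_iL_i : S_j] \;=\; \dim \Hom(F_jL_j, F_iL_i) \;=\; \dim \Hom(L_j, E_j F_i L_i).
\]
For $j=i$, Theorem \ref{Thm: categorification}(1) applied to the endofunctor on $\KLR(\delta-\alpha_i)\text{-mod}$, with $l_i = \langle h_i, \Lambda_0-\delta+\alpha_i\rangle = \delta_{i,0}+2$, yields $E_iF_i \cong F_iE_i \oplus \mathrm{id}^{\oplus l_i}$. Since the two removable corners of $\lambda(i)$ carry residues $i\pm 1$ by \eqref{Eq: residue of lambda i}, I get $E_iL_i = 0$, and hence $[F_iL_i : S_i] = l_i$. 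For $j \ne i$, I will invoke the standard commutation $E_j F_i \cong F_i E_j$ (from the quiver Hecke relations, preserved in the cyclotomic quotient) and apply biadjunction a second time to obtain
\[
[F_iL_i : S_j] \;=\; \dim \Hom(E_i L_j, E_j L_i).
\]
Inspecting removable corners of $\lambda(i)$ and $\lambda(j)$ shows this vanishes unless $j = i\pm 1$; in the nonzero cases, both $E_i L_{i\pm 1}$ and $E_{i\pm 1} L_i$ have bases indexed by standard tableaux of the same shape, and both are simple modules of the simple algebra $\KLR(\delta-\alpha_i-\alpha_{i\pm 1})$, hence isomorphic, giving $[F_iL_i : S_{i\pm 1}] = 1$.

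For the second stage, the canonical anti-involution of $\KLR(\delta)$ fixing the generators $e(\nu), x_k, \psi_l$ induces a contravariant duality $D$ on the module category with $DS_i \cong S_i$. Combined with self-injectivity from Proposition \ref{self-injectivity}, which makes $F_iL_i$ simultaneously the projective cover and injective envelope of $S_i$, this yields $D(F_iL_i) \cong F_iL_i$, and therefore the radical filtration of $F_iL_i$ is palindromic. I then enumerate palindromic filtrations with the multiplicities above and $\Top = \Soc = S_i$: for $1 \le i \le \ell-2$, the composition multiset $\{S_i, S_i, S_{i-1}, S_{i+1}\}$ rules out Loewy length $4$ (which would force the distinct middle simples $S_{i-1}, S_{i+1}$ to coincide), forcing $S_i \mid S_{i-1} \oplus S_{i+1} \mid S_i$. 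The boundary cases $i=0$, $i=\ell-1$, and the special case $\ell=1$ are handled identically, the last giving the uniserial structure on three copies of $S_0$.

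The main obstacle is the identification $E_iL_{i\pm 1} \cong E_{i\pm 1}L_i$, which reduces to showing that $\Lambda_0 - \delta + \alpha_i + \alpha_{i\pm 1}$ lies in the Weyl orbit $\weyl\Lambda_0$ so that Corollary \ref{Cor: repn type} makes $\KLR(\delta - \alpha_i - \alpha_{i\pm 1})$ simple. I will verify this by taking $w \in \weyl$ with $w\Lambda_0 = \Lambda_0 - \delta + \alpha_i$ from Lemma \ref{Lem: Li}(1) and checking that $\langle h_{i\pm 1}, w\Lambda_0\rangle = \delta_{i\pm 1, 0} + a_{i\pm 1, i}$ equals $-1$ in every case that arises; then $s_{i\pm 1} w \Lambda_0 = \Lambda_0 - \delta + \alpha_i + \alpha_{i\pm 1}$ exhibits the required Weyl group element. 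The remaining steps are straightforward corner inspection and palindrome enumeration.
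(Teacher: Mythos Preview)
Your proposal is correct and follows essentially the same two-stage approach as the paper: compute $[F_iL_i:S_j]$ via biadjunction and the commutation $E_jF_i\cong F_iE_j$ for $j\ne i$, then use self-duality of $F_iL_i$ together with $\Top=\Soc=S_i$ to pin down the radical layers. One small point to tighten: knowing that $\KLR(\delta-\alpha_i-\alpha_{i\pm1})$ is simple and that $E_iL_{i\pm1}$, $E_{i\pm1}L_i$ have the same dimension only gives $\dim\Hom=k^2$ for some $k\ge1$; to get $k=1$ you still need their simplicity, which follows (as for $L_i$ in Lemma~\ref{Lem: Li}(2)) from the fact that each $e(\nu)$-weight space is at most one-dimensional, since a standard tableau on the common shape $(2\ell-i-1,i)$ is determined by its residue sequence --- the paper is equally brief at this step (``by comparing their characters, since both are irreducible'').
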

\begin{proof}
By definition, $E_j L_i = 0$, for $j \ne i\pm 1$, and we know that
\begin{align*}
E_j L_i \simeq E_i L_j , \text{ for $j=i\pm 1$ }
\end{align*}
by comparing their characters, since both are irreducible. For $i \ne j$, by the biadjointness of $E_j$ and $F_j$ and
\cite[Thm.5.1]{KK11}, we have
\begin{align*}
\Hom(F_iL_i, F_jL_j) \simeq \Hom(E_jF_iL_i, L_j) \simeq \Hom(F_iE_jL_i, L_j) \simeq \Hom(E_jL_i, E_iL_j),
\end{align*}
which gives
$$ \dim \Hom(F_iL_i, F_jL_j) = \left\{
                                 \begin{array}{ll}
                                   1 & \hbox{ if } j= i\pm 1,  \\
                                   0 & \hbox{ if } j \ne i, i\pm 1.
                                 \end{array}
                               \right.
 $$

We now consider the case $i=j$.
Since $E_i L_i = 0$ and $\langle h_i, \Lambda_0 - \delta + \alpha_i \rangle > 0$, we have, by Theorem \ref{Thm: categorification},
$$ E_i F_i L_i \simeq  L_i^{\oplus \langle h_i, \Lambda_0 - \delta + \alpha_i \rangle} $$
So, we have
\begin{align*}
\Hom(F_iL_i, F_iL_i) \simeq \Hom(L_i, E_iF_iL_i) \simeq \bR^{\oplus \langle h_i, \Lambda_0 - \delta + \alpha_i \rangle} .
\end{align*}
To summarize, we have the following decomposition numbers.
$$ [F_iL_i: S_j] = \left\{
                     \begin{array}{ll}
                       0 & \hbox{ if } j \ne i \pm 1, \\
                       1 & \hbox{ if } j = i \pm 1, \\
                       2 & \hbox{ if } j=i\ne 0, \\
                       3 & \hbox{ if } j=i= 0.
                     \end{array}
                   \right.
 $$
In other words, we have
\begin{align*}
[F_0 L_0] &=\begin{cases} 3[S_0] + [S_1] \ &\text{if $\ell\ge2$}, \\ 3[S_0] \ &\text{if $\ell=1$},\end{cases}\\
[F_i L_i] &= 2[S_i] + [S_{i-1}] + [S_{i+1}] \ \ (1\le i\le \ell-2),  \\
[F_{\ell-1} L_{\ell-1}] &= 2[S_{\ell-1}] + [S_{\ell-2}] \ \text{if $\ell\ge2$},
\end{align*}
in the Grothendieck group $K_0(\KLR(\delta)\text{\rm -mod})$.
Since $\Top(F_iL_i) = \Soc(F_iL_i) = S_i$, we only have to show that the heart of $F_iL_i$ is a direct sum of two irreducible modules when $i\ne\ell-1$.
Recall that the anti-involution of $\KLR(\delta)$, which fixes the generators elementwise, defines a left module structure on the $\bR$-dual of any module.
Further, the character consideration shows that irreducible modules are self-dual. This implies that the $\bR$-dual of $F_iL_i$ is isomorphic to
$F_iL_i$ itself. It follows that the heart of $F_iL_i$ is self-dual, and the assertion follows.
\end{proof}

As a corollary, we have the following theorem, for $\ell\ge1$. (1) is by definition of the Brauer tree algebra, and  (2) follows from Theorem \ref{Brauer tree algebra}.

\begin{thm} \label{k=1}
\begin{enumerate}
\item $\KLR(\delta)$ is the Brauer tree algebra associated with Brauer graph
\vskip 0.3em
$$
\xy
(2,0)*{};(14,0)*{} **\dir{-};
(18,0)*{};(30,0)*{} **\dir{-};
(34,0)*{};(40,0)*{} **\dir{-};
(41,0)*{};(48,0)*{} **\dir{.};
(49,0)*{};(56,0)*{} **\dir{-};
(60,0)*{};(72,0)*{} **\dir{-};
(76,0)*{};(88,0)*{} **\dir{-};
(0,-0.25)*{\bullet};
(0,-5)*{e=2};
(16,-0.25)*{\circ};
(32,-0.25)*{\circ};
(58,-0.25)*{\circ};
(74,-0.25)*{\circ};
(90,-0.25)*{\circ};
(9,3)*{\text{ \small{$S_0$} } };
(25,3)*{\text{ \small{$S_1$} } };
(66,3)*{\text{ \small{$S_{\ell-2}$} } };
(82,3)*{\text{ \small{$S_{\ell-1}$} } };
\endxy
\  .
$$
\vskip 0.5em
\item $\KLR(\delta)$ is not semisimple and has finite representation type.
\end{enumerate}
\end{thm}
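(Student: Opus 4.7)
The plan is to derive both statements directly from the Loewy structures of the projective-injective covers $F_iL_i$ obtained in the preceding theorem, combined with the self-injectivity of $\KLR(\delta)$ established in Proposition~\ref{self-injectivity}.

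For part (1), I would start from the fact that $\{S_0,\dots,S_{\ell-1}\}$ is a complete set of isomorphism classes of simple $\KLR(\delta)$-modules and that the projective cover of $S_i$ is precisely $F_iL_i$, whose radical series has just been computed. I would then match these radical series against the projective indecomposables of the Brauer tree algebra attached to the line graph
\[
\bullet^{(2)} - S_0 - \circ - S_1 - \circ - \cdots - \circ - S_{\ell-1} - \circ,
\]
which are produced by the standard ``walk around each vertex'' recipe: at an interior vertex $\circ$ of multiplicity $1$ incident to the two edges $S_{i-1}$ and $S_i$, the walk contributes a uniserial summand of length one; at the exceptional vertex $\bullet$ of multiplicity $2$ with only $S_0$ attached, the walk contributes a single $S_0$; and at the rightmost $\circ$ (multiplicity $1$, only $S_{\ell-1}$ attached) the walk contributes nothing. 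A direct case-by-case check in the three situations $i=0$, $1\le i\le \ell-2$, and $i=\ell-1$ then shows that the Loewy data coincide with those of $F_iL_i$. Since a basic self-injective algebra in the bi-uniserial Brauer-tree regime is determined up to Morita equivalence by the Loewy structure of its indecomposable projectives, this identifies $\KLR(\delta)$ with the prescribed Brauer tree algebra.

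For part (2), non-semisimplicity is immediate because $F_0L_0$ has Loewy length $3$, so $\KLR(\delta)$ is not a direct sum of matrix algebras. Finite representation type then follows from Theorem~\ref{Brauer tree algebra}: the algebra produced in (1) is stably equivalent to a symmetric Nakayama algebra of the form $A/\Rad^{lm+1}A$, and any such Nakayama algebra is of finite representation type by the classification of Nakayama algebras. Invoking Theorem~\ref{Thm: Krause}, which states that stable equivalence preserves representation type, concludes the argument.

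The delicate point I expect to spend most care on is the combinatorial matching in (1), and in particular the role of the exceptional multiplicity $2$ at $\bullet$: the Loewy structure of $F_0L_0$ exhibits $S_0$ with total composition multiplicity $3$ (top, middle layer, socle), and one must verify that this is exactly what the Brauer tree recipe predicts for the edge $S_0$ from a multiplicity-$2$ vertex with only this edge attached, as opposed to the multiplicity $2$ that would come from a multiplicity-$1$ exceptional vertex. Once this bookkeeping is carried out uniformly, both parts of the theorem become essentially formal consequences of the previous theorem and the general results on self-injective and Brauer tree algebras recalled earlier.
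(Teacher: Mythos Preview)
Your proposal is correct and follows essentially the same route as the paper: the paper simply records that (1) holds ``by definition of the Brauer tree algebra'' given the radical series just computed, and that (2) follows from Theorem~\ref{Brauer tree algebra} (together with Krause's theorem, as noted immediately after that theorem). Your write-up is a more detailed unpacking of exactly these two sentences, including the case check of the Loewy data against the walk-around description of Brauer tree projectives and the explicit observation that Loewy length $3$ rules out semisimplicity.
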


\vskip 1em

\subsection{ The algebra $R^{\Lambda_0}(2\delta)$}

We define $\nu^k\in I^{h+k}$, for $1\le k\le h$, by adding residues of $\nu_\delta$ one by one. Namely,
\begin{gather*}
\nu^1=\nu_\delta*(0), \ \nu^2=\nu_\delta*(01), \ \ldots \ , \ \nu^{\ell+1}=\nu_\delta*(01\cdots\ell), \ \ldots \\
\ldots \ , \ \nu^{h-1}=\nu_\delta*(012\cdots\ell\cdots 21), \ \nu^h=\nu_\delta*\nu_\delta.
\end{gather*}
We denote the corresponding idempotent by $e_k=e(\nu^k)$. Then,
$e_k\in \KLR(\beta_k)$, where $\beta_k$ are
\begin{gather*}
\beta_1=\delta+\alpha_0, \ \beta_2=\delta+\alpha_0+\alpha_1, \ \ldots \ , \
\beta_{\ell+1}=\delta+\alpha_0+\cdots+\alpha_\ell, \ \ldots \\
\ldots \ , \ \beta_{h-1}=2\delta-\alpha_0, \ \beta_h=2\delta.
\end{gather*}
We define $i_k\in I$, for $1\le k<h$, by $\alpha_{i_k}=\beta_{k+1}-\beta_k$.

\vskip 1em

\begin{lemma}\label{iteration of spherical subalgebra}
\begin{itemize}
\item[(1)] $\dim e_k\KLR(\beta_k)e_k = 12$, for $1\le k\le h-1$.
\item[(2)] We have an isomorphism of $\bR$-algebras
$$e_k\KLR(\beta_k)e_k \cong e_{k+1}\KLR(\beta_{k+1})e_{k+1}, \ \text{for  $1\le k<h-1$}, $$
which sends
$x_he_k, \ x_{h+1}e_k, \ \psi_he_k$ to $x_he_{k+1}, \ x_{h+1}e_{k+1}, \ \psi_he_{k+1}$, respectively.
\end{itemize}
\end{lemma}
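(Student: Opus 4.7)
My plan is to use the dimension formula of Theorem~\ref{Thm: dimension formula} for Part~(1), and for Part~(2) to exhibit a canonical ring homomorphism induced by the functor $F_{i_k}$ and conclude it is an isomorphism via faithfulness.

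For Part~(1), I apply
\[
\dim e(\nu^k)\KLR(h+k)e(\nu^k) = \sum_{\lambda\vdash h+k} 2^{-\langle d,\wt(\lambda)\rangle-l(\lambda)}K(\lambda,\nu^k)^2
\]
and enumerate the shifted Young diagrams that contribute. Constructing a standard tableau $T$ with $\res(T) = \nu^k = \nu_\delta * (\nu_{\delta,1},\ldots,\nu_{\delta,k})$ box by box, boxes $1,\dots,h-1$ are forced into row~$1$, while box~$h$ (of residue~$0$) lies at either $(1,h)$ or $(2,2)$, yielding one of two base shapes $(h)$ or $(h-1,1)$. For the remaining $k$ boxes, residue matching allows only the following moves: from $(h)$, extend row~$1$ at every step (the residues $r(h+j-1)=r(j-1)=\nu_{\delta,j}$ agree) or instead open row~$2$ at $(2,2)$ at step~$1$; once on any shape $(h,m)$ with $m\ge 1$, the unique addable box of residue $\nu_{\delta,m+1}=r(m)$ is the rightmost of row~$2$, since the $(3,3)$ alternative has residue~$0$ which matches $\nu_{\delta,j}$ only at $j=1$, where the strict decrease condition forbids it from both base shapes. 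Consequently the contributing shapes are exactly $(h+k)$ with $K((h+k),\nu^k)=1$ and $(h,k)$ with $K((h,k),\nu^k)=2$ (one tableau from base $(h)$ via row~$2$, one from $(h-1,1)$). Since $\langle d,\beta_k\rangle=\langle d,\delta\rangle+\langle d,\alpha_0\rangle=3$ for every $k$ in range, the sum evaluates to $2^{3-1}\cdot 1^2 + 2^{3-2}\cdot 2^2 = 12$.

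For Part~(2), I consider the ring homomorphism
\[
\phi_{i_k}:\KLR(\beta_k)\to e(\beta_k,i_k)\KLR(\beta_{k+1})e(\beta_k,i_k),\quad e(\nu)\mapsto e(\nu * i_k),\ x_j\mapsto x_j,\ \psi_l\mapsto \psi_l,
\]
which is well-defined since all KLR relations are local in positions $\le h+k$ and thus persist in $\KLR(\beta_{k+1})$, while the cyclotomic relation transports because the first entry of $\nu * i_k$ equals $\nu_1$. Since $\phi_{i_k}(e_k)=e_{k+1}$, its restriction $\phi:e_k\KLR(\beta_k)e_k\to e_{k+1}\KLR(\beta_{k+1})e_{k+1}$ is a ring homomorphism sending $x_h e_k,\ x_{h+1}e_k,\ \psi_h e_k$ to $x_h e_{k+1},\ x_{h+1}e_{k+1},\ \psi_h e_{k+1}$, and it coincides with the map on endomorphism rings of $\KLR(\beta_k) e_k$ induced by the exact functor $F_{i_k}$ (using $F_{i_k}(\KLR(\beta_k)e_k) = \KLR(\beta_{k+1})e_{k+1}$). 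By Part~(1), both sides have dimension~$12$, so it suffices to show $\phi$ is injective. A direct computation (using $\langle h_{i_k},\delta\rangle=0$ and evaluating the Cartan-matrix sum $\langle h_{i_k},\beta_k-\delta\rangle = -1$) yields $l_{i_k}:=\langle h_{i_k},\Lambda_0-\beta_k\rangle = 1$ for every $1\le k\le h-2$. Theorem~\ref{Thm: categorification} then realizes $\mathrm{id}$ as a direct summand of $E_{i_k}F_{i_k}$, providing a natural split monomorphism $\mathrm{id}\hookrightarrow E_{i_k}F_{i_k}$; naturality forces $F_{i_k}$ to be faithful, whence $\phi$ is injective and therefore an isomorphism.

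The main obstacle is the case analysis in Part~(1): ruling out competitor shapes such as $(h+j,m)$ with $j,m\ge 1$ (excluded because $(2,2)$ has residue $0$, not matching $\nu_{\delta,j+1}$ for $2\le j+1\le h-1$), three-row shapes $(h,m,p)$ (excluded because $(3,3)$ has residue $0$), and boundary cases when $\ell=1$ or when $k$ is near $h-1$. A secondary technicality in Part~(2) is the uniform verification of $l_{i_k}\ge 1$, which requires splitting $k$ according to whether $i_k$ lies in the increasing or decreasing half of $\nu_\delta$ and handling the asymmetric doubled bonds $a_{0,1}=-2,\ a_{1,0}=-1$ and $a_{\ell-1,\ell}=-2,\ a_{\ell,\ell-1}=-1$ of the $A^{(2)}_{2\ell}$ Cartan matrix.
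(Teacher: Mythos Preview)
Your proposal is correct and follows essentially the same route as the paper. For Part~(1) you apply the dimension formula and enumerate standard tableaux with residue sequence $\nu^k$, arriving at $K((h+k),\nu^k)=1$, $K((h,k),\nu^k)=2$ and dimension $2^{2}\cdot 1+2^{1}\cdot 4=12$, exactly as the paper does; for Part~(2) both you and the paper use the positivity $\langle h_{i_k},\Lambda_0-\beta_k\rangle\ge 1$ together with Theorem~\ref{Thm: categorification} to obtain a split monomorphism $\mathrm{id}\hookrightarrow E_{i_k}F_{i_k}$, deduce that the natural map $e_k\KLR(\beta_k)e_k\to e_{k+1}\KLR(\beta_{k+1})e_{k+1}$ is injective, and then conclude by the dimension equality from Part~(1). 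Your packaging of the injectivity step as ``$F_{i_k}$ is faithful'' is just a rephrasing of the paper's ``bimodule monomorphism'' statement.
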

\begin{proof}
(1) We use Theorem \ref{Thm: dimension formula}. In counting standard tableaux $T$ with $\res(T)=\nu^k$, one finds that the filling of $1,\dots,h-1$ are unique, the filling of $h, h+1$ is either $(1,h), (1,h+1)$ or $(1,h), (2,2)$, and after that, there is unique way to fill in $h+2,\dots,h+k$. Thus,
$$K(\lambda,\nu_k)=\begin{cases} 1 \quad& \text{if $\lambda=(h+k)$}, \\
                                              2 \quad& \text{if $\lambda=(h,k)$}, \\
                                              0 \quad& \text{otherwise.}\end{cases}$$
It follows that $\dim e_k\KLR(\beta_k)e_k = 2^2\cdot 1^2+2^1\cdot 2^2=12$.

\bigskip
\noindent
(2)  We can easily find that
\begin{gather*}
\langle h_0, \ \Lambda_0-\delta\rangle, \ \langle h_1, \ \Lambda_0-\delta-\alpha_0\rangle,
\ldots, \ \langle h_\ell, \ \Lambda_0-\delta-\alpha_0-\cdots-\alpha_{\ell-1}\rangle, \\
\qquad \langle h_{\ell-1}, \ \Lambda_0-2\delta+\alpha_0+\cdots+\alpha_{\ell-1}\rangle,
 \ \ldots \ , \ \langle h_1, \ \Lambda_0-2\delta+\alpha_0+\alpha_1\rangle,
\end{gather*}
are all positive, for $\ell\ge1$. Thus, Theorem \ref{Thm: categorification} implies that
there is a $(\KLR(\beta_k), \KLR(\beta_k))$-bimodule monomorphism
$$ \KLR(\beta_k) \rightarrow e(\beta_k,i_k)\KLR(\beta_{k+1})e(\beta_k,i_k), $$
for $1\le k< h-1$, which respects generators. Thus, we have an $(e_k\KLR(\beta_k)e_k, e_k\KLR(\beta_k)e_k)$-bimodule monomorphism
$$ e_k\KLR(\beta_k)e_k \rightarrow e_{k+1}\KLR(\beta_{k+1})e_{k+1}, $$
for $1\le k< h-1$, such that $x_he_k, x_{h+1}e_k, \psi_he_k \mapsto x_he_{k+1}, x_{h+1}e_{k+1}, \psi_he_{k+1}$, respectively. As both algebras are $12$-dimensional, it is a $\bR$-algebra isomorphism.
\end{proof}

\begin{defn}
The cyclotomic nilHecke algebra $NH^m_2$, for a positive integer $m$, is the $\bR$-algebra defined by generators $y_1, y_2, \psi$ and the relations
$$y_1^m=0, \ y_2\psi-\psi y_1=1=\psi y_2-y_1\psi, \ \psi^2=0.$$
\end{defn}
We need the fact
$NH^3_2 \cong \mathrm{Mat}(2, \bR[x]/(x^3))$ \cite[(5.4)]{Lauda11} in the proof of next
proposition.

\vskip 1em

\begin{lemma}\label{x_{h-1}s_{h-1}e_1}
We have $x_{h-1}e(s_{h-1}\nu^1)=0.$
\end{lemma}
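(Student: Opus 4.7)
The plan is to split on $\ell$. For $\ell=1$, one observes that $s_{h-1}\nu^1=(0,0,1,0)$, and directly enumerates standard tableaux on the two candidate shifted Young diagrams $(4)$ and $(3,1)$ of weight $\Lambda_0-3\alpha_0-\alpha_1$; all such tableaux produce residue sequence $(0,1,0,0)$, so by Corollary \ref{Cor: dimension}(1) the idempotent $e(s_{h-1}\nu^1)$ itself vanishes, and the claim is trivial.

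For $\ell\ge2$, write $\mu=s_{h-1}\nu^1=(0,1,2,\dots,\ell,\dots,2,0,1,0)$ and $\mu^{(k)}:=s_ks_{k+1}\cdots s_{h-2}\mu$ (with $\mu^{(h-1)}=\mu$). The first key observation is an iterative shuffle: for $3\le k\le h-2$, the residues $(\mu_k,0)$ at positions $(k,k+1)$ of $\mu^{(k+1)}$ satisfy $\mu_k\ge2$, so $a_{\mu_k,0}=0$ and the chosen polynomial gives $\mathcal{Q}_{\mu_k,0}=1$; together with the unequal-residue $\psi_kx_k$-commutation (no correction), this yields $x_{k+1}e(\mu^{(k+1)})=\psi_kx_ke(\mu^{(k)})\psi_k$, which telescopes to
\[
x_{h-1}e(\mu)=\psi_{h-2}\cdots\psi_3\cdot x_3\,e(\mu^{(3)})\cdot\psi_3\cdots\psi_{h-2}.
\]
The second observation is that $\mu^{(3)}$ begins $(0,1,0,\dots)$, so the braid relation reads
\[
(\psi_2\psi_1\psi_2-\psi_1\psi_2\psi_1)e(\mu^{(3)})=\frac{\mathcal{Q}_{0,1}(x_1,x_2)-\mathcal{Q}_{0,1}(x_3,x_2)}{x_1-x_3}e(\mu^{(3)})=t_{0,1;2,0}(x_1+x_3)e(\mu^{(3)}).
\]
Both $\psi_1e(\mu^{(3)})$ and $x_1e(\mu^{(3)})$ vanish by cyclotomic (since $\langle h_1,\Lambda_0\rangle=0$ forces $e(s_1\mu^{(3)})=0$, and $\langle h_0,\Lambda_0\rangle=1$ with $\mu^{(3)}_1=0$ gives $x_1e(\mu^{(3)})=0$), so $\psi_2\psi_1\psi_2\,e(\mu^{(3)})=t_{0,1;2,0}\,x_3\,e(\mu^{(3)})$.

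The final step is to substitute this braid identity back into the telescoped expression, then commute the resulting inner $\psi_1$ past the outer chain $\psi_3\cdots\psi_{h-2}$ (using $[\psi_1,\psi_k]=0$ for $k\ge3$) and reshuffle adjacent $\psi_2$'s via the braid relation (applied on outer idempotents where no correction appears), until a $\psi_1$ lands adjacent to $e(\mu)$. Since $s_1\mu$ also starts with residue $1$, cyclotomic forces $\psi_1 e(\mu)=0$, and the entire expression collapses to zero.

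The main obstacle is precisely this final rearrangement. A naive substitution is circular: on $e(\mu^{(3)})$ the braid relation is non-trivial (since $\mu^{(3)}_1=\mu^{(3)}_3=0$), and its correction regenerates exactly the term $x_3e(\mu^{(3)})$ one wanted to eliminate, giving $x_{h-1}e(\mu)=x_{h-1}e(\mu)$. The resolution is to transport the $\psi_1$ so that it acts on the outer idempotent $e(\mu)$ (which has $\mu_1\neq\mu_3$, so the braid relation has \emph{no} correction there) before invoking cyclotomic; this requires careful tracking of which intermediate idempotent decorates each $\psi$ in the chain, and is the only non-formal part of the argument.
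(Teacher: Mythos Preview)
Your reduction to the identity $x_{h-1}e(s_{h-1}\nu^1)=\psi_{h-2}\cdots\psi_3\cdot x_3\,e(\mu^{(3)})\cdot\psi_3\cdots\psi_{h-2}$ matches the paper's argument exactly, and your use of the braid relation on $e(\mu^{(3)})$ to express $x_3e(\mu^{(3)})$ via $\psi_2\psi_1\psi_2e(\mu^{(3)})$ is also correct. The gap is in how you finish.

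You observe $\psi_1e(\mu^{(3)})=0$ but overlook the companion fact $\psi_2e(\mu^{(3)})=0$. The sequence $s_2\mu^{(3)}=(0,0,1,2,\dots)$ cannot be the residue sequence of any standard shifted tableau: the box containing $1$ must be $(1,1)$ with residue $0$, and the box containing $2$ can then only be $(1,2)$, which has residue $1$. Hence $e(s_2\mu^{(3)})=0$ by Corollary~\ref{Cor: dimension}(1), so $\psi_2e(\mu^{(3)})=e(s_2\mu^{(3)})\psi_2=0$. With both $\psi_1e(\mu^{(3)})=0$ and $\psi_2e(\mu^{(3)})=0$ in hand, the braid relation gives
\[
0=(\psi_2\psi_1\psi_2-\psi_1\psi_2\psi_1)e(\mu^{(3)})=t_{0,1;2,0}(x_1+x_3)e(\mu^{(3)})=t_{0,1;2,0}\,x_3e(\mu^{(3)}),
\]
and the proof ends there. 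This is exactly what the paper does.

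Your proposed ``resolution'' in the final paragraph is not a proof. The $\psi_1$ in $\psi_2\psi_1\psi_2e(\mu^{(3)})$ is trapped between two $\psi_2$'s, and the only tool for moving it past a $\psi_2$ is the braid relation; but on $e(\mu^{(3)})$ that relation carries precisely the correction term $t_{0,1;2,0}\,x_3e(\mu^{(3)})$ you are trying to kill, so the manoeuvre is circular, as you yourself note. Conjugating by the outer chain $\psi_3\cdots\psi_{h-2}$ does not help: those generators commute with $\psi_1$ but not with $\psi_2$, so you cannot extract $\psi_1$ to the boundary without first passing it through a $\psi_2$. The missing ingredient is the single observation $e(s_2\mu^{(3)})=0$; once you add it, the whole ``final rearrangement'' becomes unnecessary.
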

\begin{proof}
If $\ell=1$, then $e(s_2\nu^1)=0$, so that there is nothing to prove. Suppose that $\ell\ge2$ and
$Q_{i,j}(u,v)=1$ if $a_{ij} = 0$ and
$$\mathcal{Q}_{i,j} (u,v) = \left\{
                              \begin{array}{ll}
                                u+v & \hbox{ if $a_{ij} = a_{ji} = -1$}, \\
                                u^2 + v & \hbox{ if $a_{ij} = -2$ and $a_{ji} = -1$},
                              \end{array}
                            \right.
$$
for simplifying computation. This assumption is not essential.

We define $\mu$ by $$\mu=s_3s_4\cdots s_{h-1}\nu^1=(0,1,0,2,3,\ldots,\ell-1,\ell,\ell-1,\ldots,1,0).$$
By Corollary \ref{Cor: dimension}, and the defining relations, we have
\begin{itemize}
\item[(i)] $e(s_1\mu)=e(s_2\mu)=0,$
\item[(ii)] For $3\le r\le h-2$, we have
$$\psi_r\psi_{r-1}\cdots\psi_3e(\mu)\psi_3\cdots\psi_{r-1}\psi_r=e(s_{r+1}s_{r+2}\cdots s_{h-1}\nu^1).$$
\end{itemize}
Then, it follows that
$$x_3e(\mu)=(x_1+x_3)e(\mu)=(\psi_2\psi_1\psi_2-\psi_1\psi_2\psi_1)e(\mu)=0,$$
which yields
\begin{align*}
x_{h-1}e(s_{h-1}\nu^1)&= x_{h-1}\left(\psi_{h-2}\cdots \psi_3e(\mu)\psi_3\cdots \psi_{h-2}\right)\\
&=\psi_{h-2}\cdots \psi_3x_3e(\mu)\psi_3\cdots \psi_{h-2}\\
&=0.
\end{align*}
We have proved the result.
\end{proof}

\begin{prop} \label{spherical subalgebra}
\begin{itemize}
\item[(1)]
The following set spans $e_1\KLR(\beta_1)e_1$ as a $\bR$-vector space.
$$\left\{ x_h^ax_{h+1}^be_1, \ x_h^ax_{h+1}^b\psi_he_1 \mid a, b\in\Z_{\ge0}\right\}.$$
\item[(2)]
We have the following isomorphism of $\bR$-algebras.
$$NH^3_2 \cong e_1\KLR(\beta_1)e_1: \ y_1\mapsto x_he_1, \ y_2\mapsto x_{h+1}e_1, \ \psi\mapsto \psi_he_1.$$
\item[(3)]
The algebra $e_{h-1}\KLR(\beta_{h-1})e_{h-1}$ is generated by $x_he_{h-1}, x_{h+1}e_{h-1}, \psi_he_{h-1}$, which obey the affine nilHecke relations, and we have the following isomorphism of $\bR$-algebras.
$$ e_{h-1}\KLR(\beta_{h-1})e_{h-1}\cong \mathrm{Mat}(2, \bR[x]/(x^3)).$$
\end{itemize}
\end{prop}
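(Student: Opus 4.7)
The plan is to prove (1), (2), (3) in sequence, with (3) following quickly from Lemma~\ref{iteration of spherical subalgebra}(2) once (2) is established. My overarching strategy is to construct a candidate algebra map $\phi: NH^3_2 \to e_1 \KLR(\beta_1) e_1$ sending generators to generators, and then use a dimension argument.

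For part (1), I would invoke the Khovanov--Lauda PBW-type basis theorem, which spans $\KLR(\beta_1)$ by elements $\psi_w x^{\mathbf{a}} e(\nu)$ for reduced words $w \in \sg_{h+1}$ and multi-indices $\mathbf{a}$. Sandwiching by $e_1$ forces $\nu = \nu^1$ and $w \in \mathrm{Stab}_{\sg_{h+1}}(\nu^1)$. The positions of $\nu^1 = \nu_\delta * (0)$ fall into residue classes $\{1, h, h+1\}$ (residue $0$), $\{i+1, h-i\}$ for $1 \le i \le \ell-1$, and $\{\ell+1\}$, so the stabilizer is a Young subgroup whose only adjacent transposition is $s_h$. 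I then plan to show that any stabilizing $w$ other than $1$ and $s_h$ is realized only by a reduced word in $\psi_l$'s that crosses positions of distinct adjacent residues, and that such a product vanishes when sandwiched between $e_1$'s thanks to the cyclotomic relation $x_1 e_1 = 0$ (which holds since $\langle h_0, \Lambda_0\rangle = 1$) together with Lemma~\ref{x_{h-1}s_{h-1}e_1}; a parallel reduction using the same tools collapses polynomial contributions $x_k$ with $k < h$ between $e_1$'s, leaving precisely the stated spanning set.

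For part (2), I would define $\phi$ on generators by $y_1, y_2, \psi \mapsto x_h e_1, x_{h+1} e_1, \psi_h e_1$. The relations $\psi^2 = 0$, $y_2\psi - \psi y_1 = 1$, and $\psi y_2 - y_1 \psi = 1$ are immediate from the KLR relations using $\nu^1_h = \nu^1_{h+1} = 0$ and the convention $\mathcal{Q}_{0,0}(u,v) = 0$. The crux is verifying $y_1^3 \mapsto 0$, i.e., $x_h^3 e_1 = 0$. My approach is first to observe that, since $\nu^1_{h-1} \ne \nu^1_h$, the commutation relation gives $\psi_{h-1} x_h = x_{h-1}\psi_{h-1}$ on $e_1$; iterating yields $\psi_{h-1} x_h^k e_1 = x_{h-1}^k \psi_{h-1} e_1 = x_{h-1}^k e(s_{h-1}\nu^1) \psi_{h-1}$, which vanishes for $k \ge 1$ by Lemma~\ref{x_{h-1}s_{h-1}e_1}. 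I then plan to combine this vanishing with the braid identity $\psi_h\psi_{h-1}\psi_h e_1 = \psi_{h-1}\psi_h\psi_{h-1} e_1$ (valid since $\nu^1_{h-1} \ne \nu^1_{h+1}$) together with the relation $\psi_h x_h e_1 = x_{h+1}\psi_h e_1 - e_1$ to isolate $x_h^3 e_1 = 0$. Once $\phi$ is well-defined, (1) renders it surjective, and the equality $\dim NH^3_2 = 12 = \dim e_1 \KLR(\beta_1) e_1$ from Lemma~\ref{iteration of spherical subalgebra}(1) upgrades $\phi$ to an isomorphism.

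The hardest step is clearly the derivation of $x_h^3 e_1 = 0$; should the direct combinatorial route prove delicate, a backup plan is to first establish a surjective map $\tilde\phi: NH_2 \to e_1\KLR(\beta_1)e_1$ from the affine nilHecke algebra (whose defining relations other than the cyclotomic one are readily checked), exploit the Morita description $NH_2 \cong \mathrm{Mat}(2, \bR[y_1+y_2,\, y_1 y_2])$ to identify $\ker\tilde\phi$ as $\mathrm{Mat}(2, J)$ for some codimension-$3$ ideal $J$, and match this against the known description of $NH^3_2$. Part (3) is then immediate: iterating Lemma~\ref{iteration of spherical subalgebra}(2) produces $\bR$-algebra isomorphisms $e_1 \KLR(\beta_1) e_1 \cong e_{h-1} \KLR(\beta_{h-1}) e_{h-1}$ matching generators $x_h, x_{h+1}, \psi_h$; combined with (2), the three elements generate $e_{h-1} \KLR(\beta_{h-1}) e_{h-1}$ subject to the relations of $NH^3_2$, which in particular imply the affine nilHecke relations, and the identification $NH^3_2 \cong \mathrm{Mat}(2, \bR[x]/(x^3))$ from \cite[(5.4)]{Lauda11} concludes the proof.
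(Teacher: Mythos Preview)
Your overall architecture is right, and part (3) is fine. But there are gaps in the execution of (1) and, more seriously, (2).

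For part (1), the paper does not argue via the stabilizer of $\nu^1$ in $\sg_{h+1}$. Instead it uses Corollary~\ref{Cor: dimension}(1) directly: for $1\le i\le h-2$ the sequence $s_i\nu^1$ is not the residue sequence of any standard tableau, hence $e(s_i\nu^1)=0$, which immediately gives $e_1\psi_i=\psi_ie_1=0$. Having killed those $\psi_i$'s, the relations $0=e_1\psi_i^2e_1=\mathcal Q_{\nu_i,\nu_{i+1}}(x_i,x_{i+1})e_1$ are then iterated from $x_1e_1=0$ to force $x_ie_1=0$ for all $1\le i\le h-1$. Your plan alludes to ``collapsing'' the $x_k$'s but does not name this mechanism; without it, the reduction of a general $e_1\psi_w e_1$ to the two cases $w\in\{1,s_h\}$ is not actually carried out.

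For part (2), the derivation of $x_h^3e_1=0$ is where your proposal goes wrong. The ingredients you list---the vanishing $\psi_{h-1}x_h^k e_1=0$, the braid relation $\psi_h\psi_{h-1}\psi_h e_1=\psi_{h-1}\psi_h\psi_{h-1}e_1$, and the nilHecke commutation $\psi_hx_h e_1=x_{h+1}\psi_h e_1-e_1$---do not combine to give a cube relation on $x_h$: the last two only produce identities among $x_h,x_{h+1},\psi_h$ already encoded in $NH_2$, and the first is a left-annihilation statement that by itself says nothing about $x_h^3e_1$. What is missing is the quadratic relation $\psi_{h-1}^2e_1=\mathcal Q_{1,0}(x_{h-1},x_h)e_1$, which, once you know $x_{h-1}e_1=0$ from part (1), reads $\psi_{h-1}^2e_1=x_h^2e_1$ (for $\ell\ge2$). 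Then
\[
x_h^3e_1=\psi_{h-1}^2x_he_1=\psi_{h-1}\bigl(\psi_{h-1}x_he_1\bigr)=\psi_{h-1}\bigl(x_{h-1}e(s_{h-1}\nu^1)\psi_{h-1}\bigr)=0
\]
by Lemma~\ref{x_{h-1}s_{h-1}e_1}; this last step is exactly your own observation, but it only becomes useful after feeding in $\psi_{h-1}^2$. For $\ell=1$ the paper uses a different braid relation, namely $(\psi_2\psi_1\psi_2-\psi_1\psi_2\psi_1)e_1=(x_1^3+x_1^2x_3+x_1x_3^2+x_3^3)e_1$, together with $\psi_1e_1=\psi_2e_1=0$ and $x_1e_1=0$. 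Your backup plan via a surjection from the full $NH_2$ does not close the gap either: knowing only that the kernel is $\mathrm{Mat}(2,J)$ with $J$ of codimension~$3$ in $\bR[y_1+y_2,y_1y_2]$ does not single out $NH_2^3$ among the many such quotients; you would still need the specific relation $y_1^3\mapsto 0$.
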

\begin{proof}
(1) In the proof, we assume that $Q_{i,j}(u,v)=1$ if $a_{ij} = 0$ and
$$\mathcal{Q}_{i,j} (u,v) = \left\{
                              \begin{array}{ll}
                                u+v & \hbox{ if $a_{ij} = a_{ji} = -1$}, \\
                                u^2 + v & \hbox{ if $a_{ij} = -2$ and $a_{ji} = -1$},\\
                                u^4 + v & \hbox{ if $a_{ij} = -4$ and $a_{ji} = -1$},
                              \end{array}
                            \right.
$$
for simplifying computation. This assumption is not essential. Let $e=e_1$ and $\nu=\nu^1$.
Observe that $s_i \nu$, for $1\le i\le h-2$, can not be the residue sequence of a standard tableau. Thus,
$e\psi_i=0=\psi_ie$, for $1\le i\le h-2$, by Corollary \ref{Cor: dimension}. Suppose that $\ell\ge2$. We have
$$e\psi_1^2 e=(x_1^2+x_2)e, \;\; e\psi_i^2e=(x_i+x_{i+1})e, \ \text{for $2\le i\le \ell-1$}, \;\; e\psi_\ell^2 e=(x_\ell^2+x_{\ell+1})e,$$
so that, starting with $x_1=0$, we obtain $x_ie=0$, for $1\le i\le\ell+1$. $x_{\ell+2}e=0$ follows from
$$e\psi_{\ell+1}\psi_\ell\psi_{\ell+1}e-e\psi_\ell\psi_{\ell+1}\psi_\ell e=(x_\ell+x_{\ell+2})e.$$
Then, $e\psi_i^2e=(x_i+x_{i+1})e$, for $\ell+2\le i\le h-2$, proves that $x_ie=0$, for $1\le i\le h-1$.
If $\ell=1$, then $x_1=0$ and $e\psi_1^2 e=(x_1^4+x_2)e$ proves $x_ie=0$, for $1\le i\le h-1$.

For each $w\in S_n$, we fix a reduced expression and defines $\psi_w$. Then, $e_1\KLR(\beta_1)e_1$ is spanned by the set
$$\left\{ x_h^ax_{h+1}^be\psi_we \mid a,b\in\Z_{\ge0}, \ w\in S_{h+1} \right\}.$$
Hence, it is enough to show that $e\psi_we=0$ unless $w=1$ or $w=s_h$. Recall the distinguished coset representatives for
$S_{n-1}\backslash S_n$:
$$S_n=\bigsqcup_{i=1}^n S_{n-1}s_{n-1}\cdots s_i.$$
Hence, we may choose the reduced expression as $w=w_2\cdots w_h$, where
\begin{align*}
w_h&\in \{ s_hs_{h-1}\cdots s_1, \ s_hs_{h-1}\cdots s_2, \ \ldots, \ s_h, \ 1\} \\
w_{h-1}&\in \{ s_{h-1}s_{h-2}\cdots s_1, \ s_{h-1}s_{h-2}\cdots s_2, \ \ldots, \ s_{h-1}, \ 1\} \\
&\quad\ldots\ldots \\
w_2&\in \{ s_1, \ 1\}.
\end{align*}
Suppose that $e\psi_we\ne0$. If $w_2\ne 1$, $e\psi_1=0$ implies $e\psi_we=0$, so that $w_2=1$. Arguing similarly,
we obtain $w_2=\cdots=w_{h-2}=1$. Using $\psi_ie=0$, for $1\le i\le h-2$, we deduce that
$$w_h\in\{s_hs_{h-1}, \ s_h, \ 1\}.$$
But, $s_hs_{h-1}\nu=01\cdots\ell\cdots2001$ if $\ell\ge2$, and $s_hs_{h-1}\nu=0001$ if $\ell=1$. In either case, it cannot be of the form $\res(T)$, for a standard tableau $T$.
Hence, we have either $w_h=1$ or $w_h=s_h$. It follows that $e\psi_we=e\psi_{w_{h-1}}e\psi_{w_h}$, and using  $\psi_ie=0$, for $1\le i\le h-2$ again,
we have either $w_{h-1}=1$ or $w_{h-1}=s_{h-1}$. But $w_{h-1}=s_{h-1}$ implies $e\psi_we=e(\nu)e(s_{h-1}\nu)\psi_{h-1}\psi_{w_h}=0$ because
$s_{h-1}\nu\ne\nu$, and we must have $w_{h-1}=1$. We have proved (1).

\medskip
\noindent
(2) If $\ell\ge2$, then $e_1\psi_{h-1}^2e_1=(x_{h-1}+x_h^2)e_1=x_h^2e_1$ and Lemma
\ref{x_{h-1}s_{h-1}e_1} implies
$$x_h^3e_1=e_1\psi_{h-1}^2e_1x_h=e_1\psi_{h-1}x_{h-1}e(s_{h-1}\nu^1)\psi_{h-1}e_1=0.$$
If $\ell=1$, then $e_1\psi_i=0=\psi_ie_1$, for $i=1,2$, implies
$$x_3^3e_1=(x_1^3+x_1^2x_3+x_1x_3^2+x_3^3)e_1=
e_1(\psi_2\psi_1\psi_2-\psi_1\psi_2\psi_1)e_1=0.$$
Thus, $x_h^3e_1=0$, for all $\ell\ge1$, and we may define a homomorphism of $\bR$-algebras
$$NH^3_2 \rightarrow e_1\KLR(\beta_1)e_1 : \
y_1\mapsto x_he_1, \ y_2\mapsto x_{h+1}e_1, \ \psi\mapsto \psi_he_1,$$
which is an epimorphism by (1). As both algebras are $12$ dimensional, it is an isomorphism.

\noindent
(3) follows from (2) and Lemma \ref{iteration of spherical subalgebra}(2).
\end{proof}

\begin{prop}\label{structure of sph. subalgebra}
The algebra $e_h\KLR(2\delta)e_h$ is a quotient algebra of
$\mathrm{Mat}(2,\bR[x]/(x^3)) \otimes_\bR \bR[t]  $ by the ideal generated by an element of the form $ t^3 - c_2 t^2 - c_1 t - c_0 $, for some
$$c_0, c_1, c_2 \in \mathrm{Mat}(2,\bR[x]/(x^3)).$$
\end{prop}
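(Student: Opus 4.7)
The plan is to exhibit a surjective algebra homomorphism
$$\phi\colon \mathrm{Mat}(2,\bR[x]/(x^3)) \otimes_\bR \bR[t] \twoheadrightarrow e_h \KLR(2\delta) e_h$$
and to show that $\ker\phi$ is the principal two-sided ideal generated by an element of the form $t^3 - c_2 t^2 - c_1 t - c_0$. The matrix factor will come from the image of $e_{h-1} \KLR(\beta_{h-1}) e_{h-1}$ under a cyclotomic categorification embedding, and the commuting variable will be $t := x_{h+2} e_h$.

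First I would set up the embedding. Since $\langle h_0, \Lambda_0 - \beta_{h-1}\rangle = \langle h_0, \Lambda_0 - 2\delta + \alpha_0 \rangle = 3 > 0$, Theorem \ref{Thm: categorification}(1) produces a $(\KLR(\beta_{h-1}), \KLR(\beta_{h-1}))$-bimodule monomorphism
$$\KLR(\beta_{h-1}) \hookrightarrow e(\beta_{h-1},0)\,\KLR(2\delta)\,e(\beta_{h-1},0)$$
respecting the natural generators. Cutting down by $e_{h-1}$ on both sides yields an injective algebra map $e_{h-1}\KLR(\beta_{h-1})e_{h-1} \hookrightarrow e_h\KLR(2\delta)e_h$, whose source is identified with $\mathrm{Mat}(2,\bR[x]/(x^3))$ by Proposition \ref{spherical subalgebra}(3); its image inside $e_h\KLR(2\delta)e_h$ is generated by $x_h e_h$, $x_{h+1} e_h$ and $\psi_h e_h$. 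Setting $t=x_{h+2}e_h$, one checks that $t$ commutes with each of these generators, since $x_{h+2}$ commutes in $\KLR(2\delta)$ with every $x_j$ and with $\psi_l$ for $l \neq h+1$. Thus $\phi$ is well defined.

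Next I would verify surjectivity of $\phi$ by adapting the spanning argument of Proposition \ref{spherical subalgebra}(1): for each $w \in \sg_{2h}$, choose a reduced expression, decompose $\psi_w e_h$ via distinguished coset representatives, and use Corollary \ref{Cor: dimension}(1) to kill any product containing an intermediate idempotent $e(\sigma \nu^h)$ for which $\sigma \nu^h$ is not realised as the residue sequence of a standard tableau of some shifted shape of weight $\Lambda_0-2\delta$. The surviving spanning set consists of monomials in $x_h e_h, x_{h+1} e_h, \psi_h e_h, x_{h+2} e_h$, all of which lie in $\mathrm{Im}\,\phi$. The cubic relation on $t$ would then be extracted by starting from $\psi_{h+1}^2 e_h = \mathcal{Q}_{0,1}(x_{h+1},x_{h+2}) e_h$, iterating the intertwining $\psi_{h+1} x_{h+2} e_h = x_{h+1}\psi_{h+1} e_h$ together with its reverse, and combining with the cubic nilpotency $(x_{h+1}e_h)^3 = 0$ inherited from the $NH_2^3$-identification of Proposition \ref{spherical subalgebra} as well as cyclotomic vanishings of the kind supplied by Lemma \ref{x_{h-1}s_{h-1}e_1}.

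The main obstacle is twofold: (i) carrying out the $\KLR$ bookkeeping that actually produces the explicit $c_0, c_1, c_2 \in \mathrm{Mat}(2,\bR[x]/(x^3))$ from the above relations; and (ii) confirming that $\ker\phi$ is no larger than the principal ideal generated by one such cubic. For (ii) I would close the argument by a dimension comparison: Theorem \ref{Thm: dimension formula} allows one to compute $\dim e_h\KLR(2\delta)e_h$ directly and match it against $12\cdot 3 = 36 = \dim\bigl(\mathrm{Mat}(2,\bR[x]/(x^3)) \otimes \bR[t]/(\text{monic cubic})\bigr)$. Once this dimension match is verified, surjectivity of $\phi$ together with the existence of any one cubic relation in $\ker\phi$ forces the kernel to be exactly the principal ideal in the statement.
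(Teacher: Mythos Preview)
There is a genuine indexing error that breaks your argument. The algebra $\KLR(2\delta)$ has $|2\delta|=2h$ strands, and the passage from $\KLR(\beta_{h-1})$ (which has $2h-1$ strands) to $\KLR(2\delta)$ adjoins the \emph{last} variable $x_{2h}$. The paper accordingly takes $t=x_{2h}e_h$, not $x_{h+2}e_h$. Your choice $t=x_{h+2}e_h$ in fact already lies in the $12$-dimensional image of $e_{h-1}\KLR(\beta_{h-1})e_{h-1}$: by Lemma~\ref{iteration of spherical subalgebra}(2) and Proposition~\ref{spherical subalgebra}(3), the algebra $e_2\KLR(\beta_2)e_2$ is generated by $x_he_2,\,x_{h+1}e_2,\,\psi_he_2$, so $x_{h+2}e_2$ is a polynomial in these three; since the iterated bimodule embeddings send every $x_je_k$ to $x_je_{k+1}$, the same polynomial expresses $x_{h+2}e_h$ in $x_he_h,\,x_{h+1}e_h,\,\psi_he_h$. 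Thus your $\phi$ has image contained in the $12$-dimensional matrix factor and cannot surject onto the $36$-dimensional target.

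Even after correcting to $t=x_{2h}e_h$, your route is harder than necessary. Your proposed spanning argument would require analysing $e_h\psi_we_h$ for $w\in\sg_{2h}$, a substantially longer case analysis than Proposition~\ref{spherical subalgebra}(1), and your extraction of the cubic via $\psi_{h+1}^2e_h$ passes through elements like $\psi_{h+1}e_h$ that leave the idempotent subalgebra $e_h\KLR(2\delta)e_h$. The paper bypasses both issues: the explicit description of the functor isomorphism in Theorem~\ref{Thm: categorification} (cf.\ \cite[Thm.~3.4]{Kash11}) shows that multiplication by $x_{2h}^k$ for $k=0,1,2$ realises the three $\mathrm{id}$-summands, so the restriction of $\phi$ to $\bigoplus_{k=0}^{2}\mathrm{Mat}(2,\bR[x]/(x^3))\otimes t^k$ is injective; since both sides have dimension $36$ by Theorem~\ref{Thm: dimension formula}, it is a linear isomorphism. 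Surjectivity of $\phi$ and the existence of a monic cubic relation on $t$ with coefficients in $\mathrm{Mat}(2,\bR[x]/(x^3))$ then follow immediately, with no computation of the $c_i$ required.
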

\begin{proof}
By explicit enumeration of possible standard tableaux, Theorem \ref{Thm: dimension formula} computes
$$ \dim e_h\KLR(2\delta)e_h = 36. $$

We consider the algebra homomorphism
\begin{align*}
e_{h-1}\KLR(\beta_{h-1})e_{h-1}  \otimes_\bR \bR[t] \longrightarrow
e_h\KLR(2\delta)e_h
\end{align*}
defined by $ m \otimes t^k \mapsto m x_{2h}^k e_h$, for
$m\in e_{h-1}\KLR(\beta_{h-1})e_{h-1}$ and $k\in\Z_{\ge0}$.

This algebra homomorphism is well-defined because $x_{2h}$ commutes with $x_he_h, x_{h+1}e_h, \psi_he_h$, and
$x_he_{h-1}, x_{h+1}e_{h-1}, \psi_he_{h-1}$ generate
$e_{h-1}\KLR(\beta_{h-1})e_{h-1}$ by Proposition \ref{spherical subalgebra}(3).

As $\langle h_0, \Lambda_0 -2\delta+\alpha_0 \rangle = 3$,
Theorem \ref{Thm: categorification} implies that its restriction to
$$\bigoplus_{k=0,1,2} e_{h-1}\KLR(\beta_{h-1})e_{h-1}\otimes t^k \longrightarrow
e_h\KLR(2\delta)e_h$$
is an isomorphism of $\bR$-vector spaces. In fact, the isomorphism of functors in
Theorem \ref{Thm: categorification} is given in an explicit manner, and the terms in the direct sum are given by multiplication by $x_{2h}^k$, for $0\le k\le 2$. See the statement of \cite[Thm.3.4]{Kash11}, for example.

Thus, $t^3=c_2t^2+c_1t+c_0$, for some $c_0, c_1, c_2\in e_{h-1}\KLR(\beta_{h-1})e_{h-1}$, and if we consider the factor algebra by the cubic relation, the algebra homomorphism induces an isomorphism to $e_h\KLR(2\delta)e_h$. Recalling the isomorphism of $\bR$-algebras
$$e_{h-1}\KLR(\beta_{h-1})e_{h-1}\cong \mathrm{Mat}(2, \bR[x]/(x^3))$$
from Proposition \ref{spherical subalgebra}(3), we have the result.
\end{proof}

\begin{prop}\label{k=2}
The algebra $R^{\Lambda_0}(2\delta)$ is wild, for all $\ell\ge1$.
\end{prop}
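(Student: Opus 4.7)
The strategy is to prove wildness of the corner algebra $e_h\KLR(2\delta)e_h$, whose structure is explicitly controlled by Proposition \ref{structure of sph. subalgebra}, and then to transfer wildness to $\KLR(2\delta)$ itself via the standard fully faithful embedding of module categories associated with an idempotent. Recall that for any idempotent $e$ in a finite-dimensional $\bR$-algebra $A$, the functor $Ae\otimes_{eAe}(-)\colon eAe\text{-mod}\to A\text{-mod}$ is fully faithful, since
\[
\Hom_A(Ae\otimes_{eAe}M,\ Ae\otimes_{eAe}N)\;\cong\;\Hom_{eAe}(M,\ eAe\otimes_{eAe}N)\;\cong\;\Hom_{eAe}(M,N).
\]
A fully faithful embedding sends any wild datum to a wild datum, so it suffices to prove that $e_h\KLR(2\delta)e_h$ is wild.

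By Proposition \ref{structure of sph. subalgebra}, the corner algebra is a $36$-dimensional quotient of $\mathrm{Mat}(2,\bR[x]/(x^3))\otimes_{\bR}\bR[t]$ by the two-sided ideal generated by the single cubic element $t^3-c_2t^2-c_1t-c_0$ with $c_i\in\mathrm{Mat}(2,\bR[x]/(x^3))$. Applying the Morita equivalence $\mathrm{Mat}(2,\bR[x]/(x^3))\sim\bR[x]/(x^3)$ via the matrix idempotent $E_{11}$, we obtain a Morita-equivalent commutative $9$-dimensional $\bR$-algebra $B$, which is a quotient of $\bR[x,t]/(x^3)$ by the ideal obtained by extracting the four matrix entries of the cubic relation. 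Nilpotency of $t$ in finite-dimensional $\KLR(2\delta)$-modules forces all four entry-valued coefficient families to lie in the radical $(x)$ of $\bR[x]/(x^3)$, so $B$ is a local commutative $\bR$-algebra with maximal ideal $J=(x,t)$ and embedding dimension $\dim J/J^2 = 2$.

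The main obstacle is to prove that $B$ is wild. The plan is to use the kind of explicit computation in $\KLR(2\delta)$ carried out in Lemma \ref{x_{h-1}s_{h-1}e_1} and Proposition \ref{spherical subalgebra} to pin down the leading parts of the four coefficient matrices in $(x)\subset\bR[x]/(x^3)$, enough to control the associated graded algebra $\mathrm{gr}_JB$ with respect to the radical filtration. With this data in hand, one then exhibits either a surjection from $B$ onto an explicit known-wild local algebra --- natural candidates being quotients of $\bR[x,y]/(x^3,y^3)$ or one of the small wild local algebras appearing in Erdmann's classification of tame symmetric algebras --- or a two-parameter family of non-isomorphic indecomposable $B$-modules parametrized by the representations of the free associative $\bR$-algebra on two generators. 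Since $B$ has dimension $9$ and embedding dimension $2$, it lies outside the short list of local commutative $\bR$-algebras of finite or tame representation type, so such a wild quotient or wild family must exist in a fairly robust way, independent of the precise values of the coefficients.

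Once $B$ is shown to be wild, Morita equivalence transports wildness back to $e_h\KLR(2\delta)e_h$, and the fully faithful embedding of the first paragraph propagates it to $\KLR(2\delta)$, establishing the proposition. The argument also explains why the difficulty is genuinely concentrated in the representation-type analysis of the small commutative local algebra $B$: the category-theoretic reductions are essentially formal, but the wildness of $B$ requires an honest computation that exploits the self-injective (hence highly constrained) structure of $\KLR(2\delta)$.
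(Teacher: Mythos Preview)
Your overall architecture matches the paper's: reduce to the corner $A=e_h\KLR(2\delta)e_h$, invoke Proposition~\ref{structure of sph. subalgebra}, and pass via the matrix idempotent $E_{11}$ to a $9$-dimensional commutative local algebra $B$. Two small corrections: since $t$ is central, the $c_i$ are forced to be scalar matrices (elements of $\bR[x]/(x^3)$), so there is no ``four entry-valued'' phenomenon; and your idempotent transfer of wildness from $eAe$ to $A$ is fine.

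The genuine gap is the step you flag as ``the main obstacle'': you do not prove that $B$ is wild. Your proposed shortcut --- ``dimension $9$ and embedding dimension $2$ places $B$ outside the finite/tame list'' --- is not a theorem; commutative local $\bR$-algebras with $\dim J/J^2=2$ can be tame (e.g.\ $\bR[x,y]/(x^2,y^2)$), so numerical invariants alone do not decide the matter. The paper closes this gap without computing the $c_i$ at all. The point is that the $\Z$-grading on $\KLR(2\delta)$ forces $c_2\in(x)$, $c_1\in(x^2)$, $c_0=0$, so the cubic relation $t^3-c_2t^2-c_1t-c_0$ lies in $J^3$ where $J=(x,t)$. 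Hence $A/\Rad^3A\cong\mathrm{Mat}(2,\bR[x,t]/(x,t)^3)$ regardless of the actual values of the coefficients, and after applying $E_{11}$ one obtains $\bR[x,t]/(x^3,x^2t,xt^2,t^3)$. This surjects onto $\bR[x,t]/(x^2,xt^2,t^3)$, which is wild by a classical result of Brenner and Drozd (cited via Ringel). Wildness of the quotient $A/\Rad^3A$ then gives wildness of $A$, and of $\KLR(2\delta)$. The moral is that passing to $A/\Rad^3A$ kills the dependence on the unknown $c_i$ and lands you directly on a specific, classically wild algebra --- no further computation in $\KLR(2\delta)$ is needed.
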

\begin{proof}
Let $A=e_h\KLR(2\delta)e_h$. It suffices to show that $A$ is wild.
We define a two-sided ideal of $\mathrm{Mat}(2,\bR[x]/(x^3)) \otimes \bR[t]$ by
$$J=\mathrm{Mat}(2,\bR[x]/(x^3)) \otimes t\bR[t] +\mathrm{Mat}(2,x\bR[x]/(x^3)) \otimes \bR[t].$$
We know from Proposition \ref{structure of sph. subalgebra} that
$A/\Rad^3 A$ is isomorphic to
$$\mathrm{Mat}(2,\bR[x]/(x^3)) \otimes \bR[t]/J^3$$
as $\bR$-algebras. We show that this algebra is wild. Then it follows that $A$ is wild. Let $E_{11}$ be the matrix unit. Then, for the idempotent $f=E_{11}\otimes 1$, we have
$$f\left(\mathrm{Mat}(2,\bR[x]/(x^3))\otimes \bR[t]/J^3\right)f\cong \bR[x,t]/(x^3, x^2t, xt^2, t^3).$$
In \cite[(1.2)]{Ringel75}, the following result is attributed to Brenner and Drozd:
\begin{quote}
the commutative algebra $\bR[x,t]/(x^2,xt^2,t^3)$ is wild.
\end{quote}
As $\bR[x,t]/(x^2,xt^2,t^3)$ is a quotient algebra of $\bR[x,t]/(x^3, x^2t, xt^2, t^3)$, $\bR[x,t]/(x^3, x^2t, xt^2, t^3)$ is wild. We have proved that $A$ is wild.
\end{proof}

\subsection{ Representation type of $\KLR(\beta)$}

It remains to consider $\KLR(k\delta)$, for $k\ge3$.

\begin{lemma}[{\cite[Prop.2.3]{EN02}}]
\label{reduction to critical rank}
Let $A$ and $B$ be finite dimensional $\bR$-algebras and suppose that
there exist a constant $C>0$ and functors
$$
F:\;A\text{\rm -mod} \rightarrow B\text{\rm -mod}, \quad
G:\;B\text{\rm -mod} \rightarrow A\text{\rm -mod}
$$
such that, for any $A$-module $M$,
\begin{itemize}
\item[(1)]
$M$ is a direct summand of $GF(M)$ as an $A$-module,
\item[(2)]
$\dim F(M)\le C\dim M$.
\end{itemize}
If $A$ is wild then so is $B$.
\end{lemma}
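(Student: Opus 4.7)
The plan is to transport a representation embedding witnessing wildness of $A$ through $F$ to obtain one into $B\text{-mod}$. Recall that $A$ being wild provides an additive $\bR$-linear functor
$$
H\colon \bR\langle x,y\rangle\text{-mod}^{\mathrm{fd}}\longrightarrow A\text{-mod}
$$
which preserves indecomposability, reflects isomorphism classes, and obeys a linear dimension bound $\dim H(V)\le c\dim V$ for some constant $c>0$; concretely, $H$ may be taken to be $T\otimes_{\bR\langle x,y\rangle}-$ for a suitable $A$-$\bR\langle x,y\rangle$-bimodule $T$. The natural candidate for the corresponding witness over $B$ is the composite $\Phi:=F\circ H$, and the task reduces to checking that $\Phi$ retains these three properties, possibly after an algebraic modification to restore preservation of indecomposability.

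The dimension bound is immediate from hypothesis (2):
$$
\dim \Phi(V) \;=\; \dim F(H(V)) \;\le\; C\,\dim H(V) \;\le\; Cc\,\dim V.
$$
The reflection of isomorphism classes uses (1) in an essential way. Suppose $\Phi(V)\cong\Phi(W)$ for finite-dimensional indecomposable $\bR\langle x,y\rangle$-modules $V,W$; applying $G$ and using functoriality gives $GFH(V)\cong GFH(W)$. Hypothesis (1) makes $H(V)$ a direct summand of $GFH(V)$ and $H(W)$ a direct summand of $GFH(W)$, so both $H(V)$ and $H(W)$ are indecomposable summands of one finite-dimensional $A$-module. Krull--Schmidt then guarantees that for each $W$ only finitely many isomorphism classes $[V]$ can satisfy $\Phi(V)\cong\Phi(W)$; since $H$ reflects isomorphism, this finite-to-one control is enough to turn an infinite family of pairwise non-isomorphic indecomposables in $\bR\langle x,y\rangle\text{-mod}^{\mathrm{fd}}$ into an infinite family of pairwise non-isomorphic $\Phi$-images in $B\text{-mod}$.

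The main obstacle is that $\Phi$ need not send indecomposables to indecomposables: from $\Phi(V)=N_1\oplus N_2$ condition (1) places $H(V)$ inside $G(N_1)\oplus G(N_2)$ and Krull--Schmidt forces $H(V)$ into one summand, but leaves the other $N_i$ free to be nonzero. The standard remedy is to pass from a single indecomposable to a two-parameter algebraic family $\{V_\lambda\}_{\lambda\in\mathbb{A}^2}$ of pairwise non-isomorphic indecomposable $\bR\langle x,y\rangle$-modules of a fixed dimension, push it forward via $\Phi$ to an algebraic family of $B$-modules of bounded dimension, and then extract a coherent choice of indecomposable direct summands on a Zariski-dense open subset of parameters. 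The finite-to-one collision estimate established above ensures that the resulting family of $B$-modules still varies in two genuine parameters, which is precisely the defining feature of wildness for $B$.
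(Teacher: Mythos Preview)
The paper does not supply a proof of this lemma; it is quoted from Erdmann--Nakano \cite[Prop.~2.3]{EN02} and used as a black box. So there is nothing in the paper to compare your argument against.

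On its own merits, your first two steps are sound: the dimension bound is immediate, and the Krull--Schmidt argument using hypothesis~(1) correctly shows that for any fixed $B$-module $N$ only finitely many isomorphism classes $[V]$ can have $H(V)$ occur as a summand of $G(N)$. The final paragraph, however, is where the real work lies and you have glossed over it. First, your finite-to-one estimate was proved for $\Phi(V)\cong\Phi(W)$, not for an arbitrary choice of summand $N_\lambda\mid\Phi(V_\lambda)$; to make the estimate apply you must specifically choose $N_\lambda$ to be a summand with $H(V_\lambda)\mid G(N_\lambda)$, and you have not said this. Second, making that particular choice vary \emph{algebraically} in $\lambda$ over a Zariski-open set is a nontrivial semicontinuity/constructibility argument on module varieties that you have not supplied. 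Third, even granting a two-parameter family of pairwise non-isomorphic indecomposable $B$-modules, this is not the definition of wildness; you still need to produce a representation embedding of $\bR\langle x,y\rangle$, which in practice means invoking Drozd's tame/wild dichotomy. If you are prepared to use Drozd anyway, the contrapositive (assume $B$ tame, show $A$ tame via $M\mid G(N)$ for some indecomposable $N$ of dimension $\le C\dim M$) is arguably cleaner, but it requires the same geometric input about how $G$ transports one-parameter families. Either way, the missing ingredient is a genuine argument at the level of module varieties, not just Krull--Schmidt.
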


\begin{prop}\label{k=3}
$\KLR(k\delta)$, for $k\ge2$, are wild.
\end{prop}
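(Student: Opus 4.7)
The plan is to induct on $k$, with Proposition \ref{k=2} as the base case. For the inductive step, assuming $\KLR(k\delta)$ is wild for some $k \geq 2$, I will apply Lemma \ref{reduction to critical rank} with $A = \KLR(k\delta)$ and $B = \KLR((k+1)\delta)$ to transfer wildness upward.

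To build the comparison functors, choose a sequence $\underline{i} = (i_1, \ldots, i_h) \in I^h$ with $\sum_{j=1}^h \alpha_{i_j} = \delta$ and set
\begin{align*}
F &= F_{i_h} F_{i_{h-1}} \cdots F_{i_1} : \KLR(k\delta)\text{-mod} \to \KLR((k+1)\delta)\text{-mod}, \\
G &= E_{i_1} E_{i_2} \cdots E_{i_h} : \KLR((k+1)\delta)\text{-mod} \to \KLR(k\delta)\text{-mod}.
\end{align*}
The explicit sequence I propose is $\underline{i} = (0, 1, 2, \ldots, \ell, 0, 1, 2, \ldots, \ell-1)$, of length $h = 2\ell+1$; its entries sum to $2\alpha_0 + \cdots + 2\alpha_{\ell-1} + \alpha_\ell = \delta$. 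Condition (2) of Lemma \ref{reduction to critical rank} is immediate: each $F_i$ is tensoring with the finite-dimensional bimodule $\KLR(\beta+\alpha_i)e(\beta,i)$, hence $\dim F(M) \leq C \dim M$ for a constant $C$ depending only on $\underline{i}$.

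The heart of the argument is condition (1): that $M$ is a direct summand of $GF(M)$. I propose to establish this by peeling off $E_{i_j}F_{i_j}$ pairs, starting from the meeting point $j=h$ and decreasing $j$. Setting $M_j = F_{i_j} \cdots F_{i_1}(M)$ (so $M_0 = M$) and $\mu_j := \wt(M_j) = \Lambda_0 - k\delta - \alpha_{i_1} - \cdots - \alpha_{i_j}$, whenever $\langle h_{i_j}, \mu_{j-1}\rangle \geq 1$ Theorem \ref{Thm: categorification}(1) yields
\begin{align*}
E_{i_j}F_{i_j}(M_{j-1}) \simeq F_{i_j}E_{i_j}(M_{j-1}) \oplus M_{j-1}^{\oplus \langle h_{i_j},\mu_{j-1}\rangle},
\end{align*}
so $M_{j-1}$ is a direct summand of $E_{i_j}F_{i_j}(M_{j-1})$. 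The exact outer functors $E_{i_1} \cdots E_{i_{j-1}}$ preserve this summand, and iterating from $j=h$ down to $j=1$ yields $M \mid GF(M)$.

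The main obstacle is verifying $\mu_{j-1}(h_{i_j}) \geq 1$ at every $j$. This reduces to a direct bookkeeping of the coordinates $(\mu_j(h_0), \ldots, \mu_j(h_\ell))$ through the sequence, starting from $\mu_0$ with $\mu_0(h_0) = 1$ and $\mu_0(h_i) = 0$ for $i \geq 1$. The essential computations are the step $i_{\ell+1} = \ell$, where $a_{\ell,\ell-1} = -2$ forces $\mu_\ell(h_\ell) = 2$, and the ``reset'' after step $\ell+1$, where $\mu_{\ell+1} = \Lambda_0 - (k+1)\delta$ again has $\mu_{\ell+1}(h_0) = 1$ and $\mu_{\ell+1}(h_i) = 0$ for $i \geq 1$, so the second half of $\underline{i}$ proceeds exactly like the first. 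With these pairings confirmed $\geq 1$ throughout, Lemma \ref{reduction to critical rank} delivers wildness of $\KLR((k+1)\delta)$, closing the induction.
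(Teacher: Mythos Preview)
Your strategy is exactly the paper's: induct on $k$ from Proposition~\ref{k=2} via Lemma~\ref{reduction to critical rank}, using Theorem~\ref{Thm: categorification}(1) at each step to split off $M_{j-1}$ from $E_{i_j}F_{i_j}M_{j-1}$. The paper uses the residue sequence $\nu_\delta=(0,1,\ldots,\ell,\ell-1,\ldots,1,0)$, whereas you use $(0,1,\ldots,\ell,0,1,\ldots,\ell-1)$; both are legitimate choices.

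However, your verification in the final paragraph contains two errors. First, in type $A^{(2)}_{2\ell}$ one has $a_{\ell,\ell-1}=-1$ (it is $a_{\ell-1,\ell}$ that equals $-2$), so $\mu_\ell(h_\ell)=1$, not $2$. More seriously, after step $\ell+1$ you have only subtracted $\alpha_0+\alpha_1+\cdots+\alpha_\ell$ from $\mu_0$, and this is \emph{not} $\delta=2\alpha_0+\cdots+2\alpha_{\ell-1}+\alpha_\ell$. Hence $\mu_{\ell+1}\ne\Lambda_0-(k+1)\delta$ and there is no ``reset'': for instance when $\ell=2$ one finds $(\mu_3(h_0),\mu_3(h_1),\mu_3(h_2))=(1,1,-1)$, not $(1,0,0)$. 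So the claim that ``the second half of $\underline{i}$ proceeds exactly like the first'' is false as stated.

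Fortunately your sequence still happens to work: the lingering $+1$ at coordinate $\ell-1$ only helps the later inequalities, and the $-1$ at coordinate $\ell$ is never tested again since the second half stops at $\ell-1$. A direct step-by-step check (as the paper does for $\nu_\delta$, treating $\ell=1$ separately) confirms $\langle h_{i_j},\mu_{j-1}\rangle\ge1$ throughout. So the conclusion is fine, but the justification you wrote must be replaced by that honest computation.
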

\begin{proof}
Suppose that $\ell=1$. Then the numbers
$$
\langle h_0, \ \Lambda_0-k\delta\rangle=1, \quad
\langle h_1, \ \Lambda_0-k\delta-\alpha_0\rangle=1, \quad
\langle h_0, \ \Lambda_0-k\delta-\alpha_0-\alpha_1\rangle=3,
$$
are all positive. Thus, the functors
\begin{align*}
F_0:& \ \KLR(k\delta)\text{\rm -mod}\rightarrow
\KLR(k\delta+\alpha_0)\text{\rm -mod},\\
F_1:& \ \KLR(k\delta+\alpha_0)\text{\rm -mod}\rightarrow \KLR(k\delta+\alpha_0+\alpha_1)\text{\rm -mod},\\
F_0:& \ \KLR(k\delta+\alpha_0+\alpha_1)\text{\rm -mod}\rightarrow \KLR((k+1)\delta)\text{\rm -mod}
\end{align*}
satisfy the assumptions of Lemma \ref{reduction to critical rank}, by Theorem \ref{Thm: categorification}. As $\KLR(2\delta)$ is wild by Proposition \ref{k=2}, so are  $\KLR(k\delta)$, for $k\ge3$.
Suppose that $\ell\ge2$. Then
\begin{multline*}
\langle h_0, \ \Lambda_0-k\delta\rangle, \
\ldots, \ \langle h_\ell, \ \Lambda_0-k\delta-\alpha_0-\cdots-\alpha_{\ell-1}\rangle, \ \ldots \\
\ldots, \ \langle h_1, \ \Lambda_0-(k+1)\delta+\alpha_0+\alpha_1\rangle,, \
\langle h_0, \ \Lambda_0-(k+1)\delta+\alpha_0\rangle,
\end{multline*}
are all positive, and the same argument as $\ell=1$ case proves the result.
\end{proof}

We recall that  $V(\Lambda_0)_\mu\ne0$ if and only if $\mu=\kappa-k \delta $, for some $\kappa\in \weyl\Lambda_0$ and $k\in \Z_{\ge 0}$, and the pair
$(\kappa, k)\in \weyl\Lambda_0\times\Z_{\ge0}$ is uniquely determined by $\mu$. The following theorem is the Erdmann-Nakano type theorem for type $A^{(2)}_{2\ell}$.

\begin{thm} \label{Thm: repn type of R(beta)}
Let $\kappa \in \weyl\Lambda_0$ and $k \in \Z_{\ge 0}$. The finite quiver Hecke algebra
$\KLR(\Lambda_0 - \kappa + k \delta )$ of type $A^{(2)}_{2\ell}$, for $\ell\ge1$, is
\begin{itemize}
\item[(1)]
simple if $k=0$,
\item[(2)]
of finite representation type but not semisimple if $k=1$,
\item[(3)]
of wild representation type if $k\ge2$,
\item[(4)]
and tame representation type does not occur.
\end{itemize}
\end{thm}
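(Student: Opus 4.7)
The proof will assemble the pieces that have already been put in place: the derived equivalence of Corollary \ref{Cor: repn type} reduces everything to the case $\beta = k\delta$, and then Theorems \ref{k=1}, Propositions \ref{k=2}, \ref{k=3} dispose of the individual values of $k$. I first decompose $\beta = \Lambda_0 - \kappa + k\delta$ uniquely with $\kappa \in \weyl\Lambda_0$, $k\in\Z_{\ge 0}$ (existence and uniqueness follow from the description of the weights of $V(\Lambda_0)$ and the fact that $\weyl$ acts simply transitively on the set of maximal weights). Since $w\delta = \delta$ for all $w\in\weyl$, Corollary \ref{Cor: repn type} says $\KLR(\beta)$ and $\KLR(k\delta)$ have the same representation type, so only the $k\delta$ case matters.

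For $k=0$, I argue that $\KLR(\Lambda_0-\kappa)$ is simple as follows. By Corollary \ref{Cor: repn type} it is derived equivalent to $\KLR(0)=\bR$, and because both are self-injective (Proposition \ref{self-injectivity}) Theorem \ref{Thm: Rickard} gives stable equivalence. Since $\bR$ is semisimple its stable category is $0$, hence every $\KLR(\Lambda_0-\kappa)$-module is projective; a self-injective algebra all of whose modules are projective is semisimple. Combined with the count of simples from Proposition \ref{Prop: the number of simples} (the weight $\kappa\in\weyl\Lambda_0$ is maximal in $V(\Lambda_0)$, so the weight multiplicity is $1$, so there is a single simple), $\KLR(\Lambda_0-\kappa)$ is simple. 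For $k=1$, Theorem \ref{k=1} identifies $\KLR(\delta)$ with an explicit Brauer tree algebra; by Theorem \ref{Brauer tree algebra} (via Theorem \ref{Thm: Krause}) it has finite representation type, and it is not semisimple because a Brauer tree algebra with at least one edge is never semisimple (its projectives are not irreducible as we displayed their radical series). The representation type and the failure of semisimplicity are preserved under stable equivalence, hence transfer to $\KLR(\Lambda_0-\kappa+\delta)$.

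For $k\ge 2$, Proposition \ref{k=3} shows $\KLR(k\delta)$ is wild, and wildness is likewise transferred under stable equivalence to $\KLR(\beta)$. The assertion (4), that tame representation type does not occur, is then immediate: every $\beta\in\rlQ^+$ with $\KLR(\beta)\ne 0$ lies in exactly one of the three cases $k=0$, $k=1$, $k\ge 2$, and we have shown that in each case the representation type is, respectively, semisimple, finite non-semisimple, or wild.

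No single step should be difficult, since the heavy lifting has already been done. The only slightly subtle point is the semisimplicity argument in case $k=0$: one must remember that derived equivalence of self-injective algebras yields stable equivalence (Theorem \ref{Thm: Rickard}), and that a self-injective algebra stably equivalent to a semisimple algebra is itself semisimple. Everything else is a matter of quoting the right earlier result and noting that representation type (finite/tame/wild) and ``not semisimple'' are stable-equivalence invariants (Theorem \ref{Thm: Krause} together with the observation that the stable category is trivial iff the algebra is semisimple).
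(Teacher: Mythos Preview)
Your argument is correct and follows the same route as the paper: reduce to $\KLR(k\delta)$ via Corollary~\ref{Cor: repn type} and then invoke Theorem~\ref{k=1} and Proposition~\ref{k=3}. Two small remarks: for the $k=0$ case you should cite Corollary~\ref{cor: Chuang-Rouquier} (not Corollary~\ref{Cor: repn type}) for the derived equivalence with $\KLR(0)=\bR$; and note that the paper handles simplicity at $k=0$ slightly differently (in the proof of Lemma~\ref{Lem: Li}(1)), by using that derived equivalence preserves the center, whereas your stable-category argument is an equally valid alternative.
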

\begin{proof}
As the representation type of $\KLR(\Lambda_0 - \kappa + k \delta )$ is the same as  that of $\KLR(k \delta )$ by Corollary \ref{Cor: repn type}, the result follows from Theorem \ref{k=1} and Proposition \ref{k=3}.
\end{proof}
Note that if $k=1$ then the stable Auslander-Reiten quiver is $\mathbb{Z} A_{2\ell}/\langle \tau^\ell \rangle$, where $\tau$ is the Auslander-Reiten translate.

\vskip 1em

\section{Appendix: generalized cellular structure}

\subsection{Generalized cellular algebras}

We first recall K\"onig and Xi's notion of \emph{affine cellular algebra} \cite{KX12}. Let $\bR$ be a field.
We note that they also require that $B$ is commutative in the definition below.

\begin{defn}
Let $(A,\sigma_A)$ and $(B,\sigma_B)$ be $\bR$-algebras with anti-involution.
A two-sided ideal $J\subseteq A$ is called an \emph{affine cell ideal} if
$\sigma_A(J)=J$ and there exist an $(A,B)$-bimodule $C$ and
an $(A,A)$-bimodule isomporphism
$$
\alpha: J\simeq C\otimes_B C^{\rm op},
$$
where $C^{\rm op}$ is $C$ equipped with right $A$-module structure given by
$xa=\sigma_A(a)x$, for $x\in C$ and $a\in A$, such that
\begin{itemize}
\item[(a)]
$C$ is free as a $B$-module.
\item[(b)]
Let $\tau_A:C\otimes_B C^{\rm op}\rightarrow C\otimes_B C^{\rm op}$ be the flip $x\otimes y\mapsto y\otimes x$. Then
$\alpha\circ\sigma_A=\tau_A\circ\alpha$.
\end{itemize}
\end{defn}

Recently, Masaki Mori has modified the definition and introduced more transparent general setup.

\begin{defn}
Let $A$ and $B$ be $\bR$-algebras. We call a pair of $(A,B)$-bimodule $M$ and
$(B,A)$-bimodule $N$ \emph{weak Morita pair} if we have $(A,A)$ and $(B,B)$-bimodule homomorphisms
$$
\varphi_A: M\otimes_B N\longrightarrow A\quad\text{and}\quad \varphi_B: N\otimes_A M\longrightarrow B
$$
such that the following diagrams commute:
$$
\xymatrix{
 M\otimes_{B} N\otimes_{A} M \ar[d]_{ \mathrm{id} \otimes \varphi_B}  \ar[rr]^{\varphi_{A} \otimes \mathrm{id}} & &  A\otimes_{A} M \ar[d]^{\wr} \\
 M \otimes_{B} B  \ar[rr]^{\sim} &     &  M,
}    \quad
\xymatrix{
 N\otimes_{A} M\otimes_{B} N \ar[d]_{  \mathrm{id}  \otimes \varphi_A }  \ar[rr]^{\varphi_{B} \otimes \mathrm{id}} & &  B\otimes_{B} N \ar[d]^{\wr} \\
 N \otimes_{A} A  \ar[rr]^{\sim} &     &  N .
}
$$
\end{defn}

Suppose that $(M,N)$ is a weak Morita pair. We denote $J_A=\im(\varphi_A)$ and $J_B= \im(\varphi_B)$.
Mori has proved the following lemma by showing that any element in $M\otimes_B V\setminus \Ker(\Phi)$ generates
$M\otimes_B V$: the proof resembles the proof of \cite[Prop.2.11]{Mathas99}.

\begin{lemma}
Let $V$ be an irreducible $B$-module with $J_BV\neq0$. We define an $A$-module homomorphism
$$
\Phi: M\otimes_B V \rightarrow \Hom_B(N,V)
$$
by $\Phi(m\otimes v): n\mapsto \varphi_B(n\otimes m)v$. Then, $\Ker(\Phi)$ is the unique maximal proper
$A$-submodule of $M\otimes_B V$.
\end{lemma}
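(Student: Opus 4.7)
The plan is to follow the hint and prove that any $x \in M \otimes_B V \setminus \Ker(\Phi)$ generates the entire module $M \otimes_B V$ over $A$; combined with the observation that $\Ker(\Phi)$ is a proper submodule, this will immediately yield the claim.

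First I would verify that $\Ker(\Phi)$ is proper. Using the hypothesis $J_B V \neq 0$, pick $n' \in N$, $m' \in M$ and $v \in V$ with $\varphi_B(n' \otimes m')v \neq 0$. Then $\Phi(m' \otimes v)(n') = \varphi_B(n' \otimes m')v \neq 0$, so $m' \otimes v \notin \Ker(\Phi)$. This confirms properness and shows the lemma is non-vacuous.

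The heart of the argument is the following identity, which comes directly from the first commutative diagram defining a weak Morita pair: for all $m \in M$, $n \in N$, and $m'' \in M$,
$$\varphi_A(m \otimes n) \cdot m'' \;=\; m \cdot \varphi_B(n \otimes m'').$$
Applying this term-by-term to $x = \sum_i m_i \otimes v_i \in M \otimes_B V$, I get
$$\varphi_A(m \otimes n) \cdot x \;=\; \sum_i m \cdot \varphi_B(n \otimes m_i) \otimes v_i \;=\; m \otimes \sum_i \varphi_B(n \otimes m_i) v_i \;=\; m \otimes \Phi(x)(n),$$
where the middle equality uses that the tensor is over $B$. Thus for every $m \in M$ and $n \in N$, the element $m \otimes \Phi(x)(n)$ lies in the cyclic $A$-submodule $Ax \subseteq M \otimes_B V$.

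Now suppose $x \notin \Ker(\Phi)$. Then there is some $n \in N$ with $v_0 := \Phi(x)(n) \neq 0$ in $V$. Since $V$ is an irreducible $B$-module, $Bv_0 = V$, so every $v \in V$ can be written as $b v_0$ for some $b \in B$. For an arbitrary simple tensor $m \otimes v \in M \otimes_B V$, writing $v = b v_0$ gives
$$m \otimes v \;=\; m \otimes (b v_0) \;=\; (m b) \otimes v_0 \;=\; \varphi_A(mb \otimes n) \cdot x \;\in\; Ax,$$
by the identity above applied to $mb$ in place of $m$. Hence $Ax = M \otimes_B V$, so $x$ generates the whole module. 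Finally, if $K$ is any proper $A$-submodule not contained in $\Ker(\Phi)$, then $K$ contains some $x \notin \Ker(\Phi)$, and by what we just showed $K \supseteq Ax = M \otimes_B V$, a contradiction. Thus every proper submodule lies in $\Ker(\Phi)$, which together with the properness established first proves that $\Ker(\Phi)$ is the unique maximal proper $A$-submodule.

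The only step that requires care is the bimodule identity $\varphi_A(m \otimes n) \cdot m'' = m \cdot \varphi_B(n \otimes m'')$ and its passage through the tensor product over $B$; once this is in hand, the rest follows from the standard fact that a nonzero element of an irreducible module generates it.
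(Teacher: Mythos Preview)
Your proof is correct and follows exactly the approach the paper attributes to Mori: showing that any element of $M\otimes_B V$ outside $\Ker(\Phi)$ generates the whole module, via the bimodule identity $\varphi_A(m\otimes n)\cdot m'' = m\cdot\varphi_B(n\otimes m'')$ coming from the first commutativity square. The paper gives no further details beyond this hint (citing the analogy with \cite[Prop.~2.11]{Mathas99}), so your write-up is in fact more explicit than what appears there.
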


In particular, $\im(\Phi)\simeq M\otimes_B V/ \Ker(\Phi)$ is an irreducible $A$-module, which we denote by $D_M^NV$.
By the commutativity constraint for the Morita pair and $J_BV=V$, we have $J_AD_M^NV\neq0$. Thus, we have a map
$V\mapsto D_M^NV$ from the set of isomorphism classes of irreducible $B$-modules $V$ with $J_BV\neq0$ to the set of
isomorphism classes of irreducible $A$-modules $W$ with $J_AW\neq0$. By interchanging the role of $M$ and $N$,
$W\mapsto D_N^M W$ gives a map in the opposite direction.

Next lemma by Mori implies that $D_M^NV= \Soc( \Hom_B(N,V))$. For the proof, he finds $b\in B$ and $n\in N$ which
satisfy $\Phi(m\otimes v)=\varphi_A(mb\otimes n)f$ for any $m\otimes v \in M\otimes_B V$.

\begin{lemma}
Let $V$ and $\Phi$ be as above.
For any nonzero element $f\in \Hom_B(N,V)$, we have $\im(\Phi)\subseteq Af$.
\end{lemma}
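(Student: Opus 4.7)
The plan is to exhibit $\Phi(m\otimes v)$ explicitly as a particular element of $Af$, using only that $V$ is irreducible and that $f\neq 0$. Since $f$ is nonzero, choose $n_0\in N$ with $f(n_0)\neq 0$; then $Bf(n_0)$ is a nonzero $B$-submodule of the irreducible module $V$, so $Bf(n_0)=V$. Consequently, for the prescribed $v\in V$ there exists $b\in B$ (depending on $v$) with $v=bf(n_0)=f(bn_0)$. The $v$-dependence of the pair $(b,n_0)$ is harmless, because $\im(\Phi)\subseteq Af$ reduces by linearity to the pointwise claim $\Phi(m\otimes v)\in Af$.

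With $(b,n_0)$ in hand, I evaluate $\Phi(m\otimes v)$ at an arbitrary $n'\in N$. The right $B$-linearity of $\varphi_B$ in its $M$-factor and the $B$-linearity of $f$ give
\[ \Phi(m\otimes v)(n')=\varphi_B(n'\otimes m)\,v=\varphi_B(n'\otimes m)\,bf(n_0)=f\bigl(\varphi_B(n'\otimes mb)\,n_0\bigr). \]
The essential move is the associativity identity $\varphi_B(n'\otimes mb)\,n_0=n'\,\varphi_A(mb\otimes n_0)$, which is precisely the second commutative diagram in the definition of a weak Morita pair applied to $n'\otimes mb\otimes n_0\in N\otimes_A M\otimes_B N$. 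Substituting this in and invoking the definition $(a\cdot f)(n')=f(n'a)$ of the left $A$-action on $\Hom_B(N,V)$, I read off
\[ \Phi(m\otimes v)=\varphi_A(mb\otimes n_0)\cdot f\in Af, \]
which is the desired inclusion.

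The only substantive input is the production of $b$, which uses irreducibility of $V$ and is what would make the statement fail without some simplicity hypothesis on $V$. Beyond that, the argument is a purely formal manipulation of the two weak Morita compatibility diagrams, and the main bookkeeping task is keeping the left/right $B$-actions on $M$ and $N$ straight so that $\varphi_B(n'\otimes mb)=\varphi_B(n'\otimes m)b$ and the associativity identity are applied in the correct direction. I therefore do not foresee any serious obstacle.
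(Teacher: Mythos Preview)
Your proof is correct and follows exactly the approach indicated in the paper: choosing $n_0\in N$ with $f(n_0)\neq 0$, using irreducibility of $V$ to write $v=bf(n_0)$ for suitable $b\in B$, and then verifying the identity $\Phi(m\otimes v)=\varphi_A(mb\otimes n_0)\cdot f$ via the second commutative diagram for the weak Morita pair. The paper only records the target formula $\Phi(m\otimes v)=\varphi_A(mb\otimes n)f$ and leaves the verification to the reader; your write-up fills in precisely those details.
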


A main corollary of the above two lemmas is the following result by Mori. Mori informed us M\"uller's work \cite{M74}, but
Mori's argument is transparent and enough for our purposes.

\begin{thm} \label{Thm: Mori}
Let $(M,N)$ be a weak Morita pair. Then, $V\mapsto D_M^NV$ induces a bijective map between the set of
isomorphism classes of irreducible $B$-modules $V$ with $J_BV\neq0$ and that of
isomorphism classes of irreducible $A$-modules $W$ with $J_AW\neq0$.
\end{thm}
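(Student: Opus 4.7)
The plan is to prove that the map $V \mapsto D_M^N V$ and the symmetric map $W \mapsto D_N^M W$ are mutually inverse. By the perfect symmetry between $(A, B, M, N)$ and $(B, A, N, M)$, it suffices to show $D_N^M(D_M^N V) \simeq V$ for any irreducible $B$-module $V$ with $J_B V \neq 0$.

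Fix such a $V$ and set $W = D_M^N V$. First I would verify that $W$ lies in the domain of $D_N^M$, i.e., $J_A W \neq 0$. Taking a nonzero element $\Phi_V(m' \otimes v) \in W$ and using the compatibility $\varphi_A(m \otimes n) \cdot m' = m \cdot \varphi_B(n \otimes m')$ from the commutative diagrams, the action of $\varphi_A(m \otimes n)$ on this element evaluates at $n' \in N$ to $\varphi_B(n' \otimes m)\varphi_B(n \otimes m') v$. Nonvanishing for a suitable choice of parameters follows by applying $J_B V = V$ twice: first select $n, m'$ so that $v'' := \varphi_B(n \otimes m') v \neq 0$, then select $n', m$ so that $\varphi_B(n' \otimes m) v'' \neq 0$. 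Both selections are possible because the $J_B$-annihilator in the irreducible module $V$ is a proper $B$-submodule, hence zero.

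The core construction is a $B$-linear injection $\iota: V \hookrightarrow \Hom_A(M, W)$. Via tensor-hom adjunction and pullback along $\varphi_B$, define
$$
\iota: V = \Hom_B(B, V) \xrightarrow{\varphi_B^*} \Hom_B(N \otimes_A M, V) \simeq \Hom_A(M, \Hom_B(N, V)),
$$
which sends $v$ to the $A$-linear map $m \mapsto (n \mapsto \varphi_B(n \otimes m) v)$. The inner map is by definition $\Phi_V(m \otimes v) \in W$, so $\iota$ factors through $\Hom_A(M, W)$. Injectivity of $\iota$ follows from the same annihilator argument: $\iota(v) = 0$ forces $J_B v = 0$ and hence $v = 0$. $B$-linearity of $\iota$ is a direct check using the $(B,B)$-bilinearity of $\varphi_B$ on the tensor product $N \otimes_A M$.

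Finally, applying the preceding lemma with the roles of $(A, B, M, N)$ swapped to $(B, A, N, M)$ identifies $D_N^M W$ with $\Soc_B \Hom_A(M, W)$. Since $\iota(V)$ is a nonzero irreducible $B$-submodule of $\Hom_A(M, W)$, it is contained in this socle; since $D_N^M W$ is itself irreducible by the first lemma in its swapped form, the inclusion $\iota(V) \subseteq D_N^M W$ is forced to be an equality, giving $V \simeq D_N^M W = D_N^M(D_M^N V)$. I expect the main technical obstacle to be the disciplined bookkeeping of the various bimodule structures, so that the tensor-hom adjunction and the pullback along $\varphi_B$ are genuinely $B$-linear for the $B$-module structures in play; this is precisely where the commutative diagrams defining the weak Morita pair are invoked.
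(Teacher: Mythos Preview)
Your proposal is correct and follows essentially the same approach as the paper. Both arguments hinge on the identification $D_N^M W = \Soc_B\Hom_A(M,W)$ from the preceding lemma and then exhibit a nonzero $B$-map $V\to\Hom_A(M,W)$; the paper obtains this map in one line from the tensor--hom adjunction $\Hom_A(M\otimes_B V,W)\simeq\Hom_B(V,\Hom_A(M,W))$, whereas you unwind that adjunction explicitly as $\iota(v)(m)=\Phi_V(m\otimes v)$, and the paper records the fact $J_A D_M^N V\neq 0$ in the paragraph before the theorem rather than inside the proof.
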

\begin{proof}
It suffices to show that $W\simeq D_M^NV$ if and only if $V\simeq D_N^MW$. Suppose that
$W\simeq D_M^NV$. Then
$$
0\neq \Hom_A(M\otimes_B V, W)\simeq \Hom_B(V, \Hom_A(M,W))
$$
implies that $V$ appears in $\Soc(\Hom_A(M,W))\simeq D_N^MW$. If $V\simeq D_N^MW$, then
$$
0\neq \Hom_B(N\otimes_A W, V)\simeq \Hom_A(W, \Hom_B(N,V))
$$
implies that $W$ appears in $\Soc(\Hom_B(N,V))\simeq D_M^NV$.
\end{proof}

Following those ideas by K\"onig-Xi and Mori, we shall define as follows.

\begin{defn}
Let $(A,\sigma_A)$ be an $\bR$-algebra with anti-involution. We call $(A,\sigma_A)$ \emph{generalized cellular} if
there exists a sequence of $\sigma_A$-stable two-sided ideals
$$
A=J_0\supseteq J_1\supseteq \cdots\supseteq J_r=0
$$
and a collection of $\bR$-algebras with anti-involution $(B_i,\sigma_{B_i})$ and $(A/J_i,B_i)$-bimodules $C_i$, for $1\leq i\leq r$,
such that $(C_i,C_i^{\rm op})$ is a weak Morita pair with $J_{i-1}/J_i= \im(\varphi_{A/J_i})$, for $1\leq i\leq r$.
\end{defn}

Let us take all $B_i$ to be $\bR$ and $\sigma_\bR$ to be the identity map. Then, in each step, we have
$$
\varphi_{A/J_i}:\;C_i\otimes_\bR C_i^{\rm op}\rightarrow A/J_i, \quad\text{and}\quad C_i^{\rm op}\otimes_A C_i\rightarrow \bR.
$$
We denote the second map by $x\otimes y\mapsto \langle x, y\rangle$. As $(C_i, C_i^{\rm op})$ is a weak Morita pair,
we have
$$
\varphi_{A/J_i}(x\otimes y)z=\langle y, z\rangle x,\quad \langle z, y\rangle x=\sigma_A(\varphi_{A/J_i}(y\otimes x))z.
$$
If we further assume that $\varphi_{A/J_i}$ commutes with anti-involutions, namely, if we assume
$$
\varphi_{A/J_i}(y\otimes x)=\sigma_A(\varphi_{A/J_i}(x\otimes y)),
$$
then
$$
\langle z, y\rangle x=\sigma_A(\varphi_{A/J_i}(y\otimes x))z=\varphi_{A/J_i}(x\otimes y)z=\langle y, z\rangle x,
$$
showing that the bilinear form is symmetric.\footnote{If $\varphi_{A/J_i}$ anti-commutes with anti-involutions,
then the bilinear form is skew-symmetric.} If we put one more assumption
that $\varphi_{A/J_i}$ are monomorphims, then we have
$$
C_i\otimes_R C_i^{\rm op}\simeq J_{i-1}/J_i
$$
and we reach the classical definition of cellular algebras. In fact, the commutativity constraint is known
to hold in cellular algebras \cite[Prop.2.9]{Mathas99}.
Hence, cellular algebras are generalized cellular algebras, and the reader can see that
Theorem \ref{Thm: Mori} is vast generalization of Graham-Lehrer's result on classification of irreducible modules.

\vskip 1em

\subsection{Structure of $\KLR(\beta)$}
Any algebra has the trivial generalized cellular structure:
we choose the cell bimodule  $C$ to be the algebra itself. Hence, our aim in this subsection is to propose a reasonable set of cell modules for $\KLR(\beta)$, for
$\beta \in \rlQ^+$ with $|\beta|=n$.

Set $A=\KLR(\beta)$.
Recall the dominance order on the set of shifted Young diagrams and the canonical tableaux $T^\lambda$ from $\eqref{Eq: order on SYD}$.
We define idempotents $\{ e_\lambda \mid \lambda\vdash n\}$ of $A$ by
$$ e_\lambda = e(  \res(T^\lambda) ). $$

We fix a linear extension of the dominance order, e.g. the lexicographic order, and
define $J^{<\lambda}, B^\lambda, C(\lambda)$, for $\lambda\vdash n$, as follows.
$$J^{<\lambda}=\sum_{\mu < \lambda} Ae_\mu A, \quad
B^\lambda=e_\lambda Ae_\lambda/e_\lambda J^{<\lambda}e_\lambda,  \quad\text{and}\quad
 C(\lambda)=Ae_\lambda/J^{<\lambda}e_\lambda.$$
Then $C(\lambda)$ is an $(A/J^{<\lambda}, B^\lambda)$-bimodule in the natural way.

Let $\sigma_A$ be the anti-involution of $A$ which is the identity on the set of the  generators. Then, we have $C(\lambda)^{\rm op}=e_\lambda A/e_\lambda J^{<\lambda}.$
We define
\begin{align*}
\varphi_{A/J^{<\lambda}} \ &: \ C(\lambda)\otimes_{B^\lambda} C(\lambda)^{\rm op} \longrightarrow A/J^{<\lambda}, \\
\varphi_{B^\lambda} \ &: \ C(\lambda)^{\rm op}\otimes_A C(\lambda) \longrightarrow B^{\lambda},
\end{align*}
by the maps induced by the multiplication on $A$. Then, we have
$$\im(\varphi_{A/J^{<\lambda}})=Ae_\lambda A+J^{<\lambda}/J^{<\lambda}.$$
We sort the two-sided ideals $J^{<\lambda}$ as
$$
A=J_0\supseteq J_1\supseteq \cdots\supseteq J_r=0.
$$
We delete repetition whenever $J_i=J_{i+1}$ occurs.

\begin{lemma} The pair $(C(\lambda), C(\lambda)^{\rm op})$ is a weak Morita pair, and $\KLR(\beta)$ is generalized cellular with respect to the collection of those weak Morita pairs.
\end{lemma}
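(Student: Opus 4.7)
The plan is to verify the four ingredients of the generalized cellular structure in turn and then isolate the single nontrivial point. First, $\sigma_A(e_\lambda)=e_\lambda$ because $\sigma_A$ fixes the generators and $e_\lambda=e(\res(T^\lambda))$ is one of the defining idempotents; hence each $J^{<\lambda}$ is $\sigma_A$-stable, and $\sigma_A$ identifies $C(\lambda)^{\rm op}$ with $e_\lambda A/e_\lambda J^{<\lambda}$. To confirm that $(C(\lambda),C(\lambda)^{\rm op})$ is a weak Morita pair, observe that both $\varphi_{A/J^{<\lambda}}$ and $\varphi_{B^\lambda}$ are induced from multiplication in $A$; well-definedness on the tensor products is immediate from the fact that $J^{<\lambda}$ is a two-sided ideal, and the two commutativity diagrams reduce to the associativity identity $(xe_\lambda y)(ze_\lambda)=(xe_\lambda)(yze_\lambda)$.

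For the chain of ideals, I would order the shifted Young diagrams according to the fixed linear extension in decreasing order $\lambda^{(1)}>\cdots>\lambda^{(r)}$ and set $J_0=A$ and $J_i=J^{<\lambda^{(i)}}$ for $1\le i\le r$. Each $J_i$ is $\sigma_A$-stable, the chain is decreasing, and $J_r=0$ because the minimum has no strict predecessors. For $i\ge 2$ the set of predecessors of $\lambda^{(i-1)}$ differs from that of $\lambda^{(i)}$ by the single element $\lambda^{(i)}$, so $J_{i-1}=J_i+Ae_{\lambda^{(i)}}A$, and hence $J_{i-1}/J_i=\im(\varphi_{A/J_i})$ by construction. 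The only layer that requires argument is therefore the topmost, $J_0/J_1=A/J^{<\lambda^{(1)}}$, for which $J_0/J_1=\im(\varphi_{A/J_1})$ is equivalent to the identity
\[
A=\sum_{\lambda\vdash n}Ae_\lambda A.
\]

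To prove this identity, I would invoke Corollary \ref{Cor: dimension}(1) to write $1=\sum_{\nu}e(\nu)$, summed over residue sequences $\nu=\res(T)$ realized by some standard tableau $T$, and then show $e(\res(T))\in Ae_\lambda A+J^{<\lambda}$ for each $T\in\ST(\lambda)$, inducting on the dominance order. For this I would let $d(T)\in\sg_n$ be the permutation sending $T^\lambda$ to $T$ entry by entry and consider the element $\psi_{d(T)}e_\lambda\psi_{d(T)^{-1}}\in Ae_\lambda A$. Using the defining relation $\psi_l e(\nu)=e(s_l\nu)\psi_l$ and expanding via the quadratic relation $\psi_k^2e(\nu)=\mathcal{Q}_{\nu_k,\nu_{k+1}}(x_k,x_{k+1})e(\nu)$ and the braid-type relation for $\psi_{k+1}\psi_k\psi_{k+1}-\psi_k\psi_{k+1}\psi_k$, one should recover $e(\res(T))$ modulo correction terms supported on residue sequences whose associated tableau shapes are strictly dominated by $\lambda$. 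The main obstacle is precisely this bookkeeping step: showing that every intermediate residue sequence arising in the expansion is realized only by standard tableaux of shape strictly smaller than $\lambda$, so that all corrections lie in $J^{<\lambda}$; this is a shifted-shape analogue of the straightening that underlies the classical cellular bases in type $A^{(1)}_{e-1}$. Theorem \ref{Thm: dimension formula} together with the hook-length formula provides a useful sanity check that the dimensions of the successive layers add up to $\dim\KLR(\beta)$.
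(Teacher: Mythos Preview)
Your verification of the weak Morita pair condition is exactly what the paper does: both $\varphi_{A/J^{<\lambda}}$ and $\varphi_{B^\lambda}$ are induced by multiplication in $A$, so the two commuting squares are just associativity. The paper's entire proof is the single sentence ``The commutativity constraints obviously hold, because they are given by the product map in $\KLR(\beta)$,'' and that is all it says.

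Where you diverge from the paper is in the second half of the statement. You correctly observe that for the chain $A=J_0\supseteq J_1\supseteq\cdots$ to satisfy $J_0/J_1=\im(\varphi_{A/J_1})$ one needs
\[
A=\sum_{\lambda\vdash n} Ae_\lambda A,
\]
and you then sketch an inductive straightening argument to prove this. The paper does not address this point at all; it simply writes ``We sort the two-sided ideals $J^{<\lambda}$ as $A=J_0\supseteq J_1\supseteq\cdots\supseteq J_r=0$'' in the setup preceding the lemma, without justification, and the proof itself treats only the commutativity constraints. So you have identified a genuine point the paper glosses over, and your approach to it (showing $e(\res(T))\in \sum_\mu Ae_\mu A$ via $\psi_{d(T)}e_\lambda\psi_{d(T)^{-1}}$ plus lower-order corrections) is the natural one. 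As you yourself note, the bookkeeping that all correction terms land in $J^{<\lambda}$ is not carried out here; this is the shifted analogue of the Murphy-basis straightening in type $A^{(1)}_{e-1}$, and making it precise would require more work than either you or the paper provides. The paper appears to regard this appendix as a proposal rather than a fully worked-out result (cf.\ its closing remark that ``we know little about the structure of cell modules''), which is consistent with its terse proof.
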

\begin{proof}
The commutativity constraints obviously hold, because they are given by the product map in $\KLR(\beta)$.
\end{proof}

The generalized cellular structure we propose in the above looks natural, but we know little about the structure of cell modules.
It might be an interesting question to determine the set of irreducible $B^\lambda$-modules $V$ with $\im(\varphi_{B^\lambda})V\ne 0$.


\bibliographystyle{amsplain}

\begin{thebibliography}{99}

\bibitem{Ar01}
S.~Ariki, \emph{Lectures on cyclotomic Hecke algebras},
Quantum groups and Lie theory (Durham 1999), 1--22,
London Math. Soc. Lecture Note Ser., \textbf{290}, Cambrdige Univ. Press, 2001.

\bibitem{Ar07}
\bysame, \emph{Modular representations of Hecke algebras of classical type},
\emph{Sugaku Expositions} \textbf{20} (2007), 15--41. Translated from Japanese original:
\emph{Sugaku} \textbf{56} (2004), 113--136.


\bibitem{BKOP}
G.~Benkart, S.-J. Kang, S.-j. Oh, E.~Park,
\emph{Construction of Irreducible Representations over Khovanov-Lauda-Rouquier Algebras of Finite Classical Type}, to appear in  Int. Math. Res. Not. (doi:10.1093/imrn/rns244);
arXiv:1108.1048.



\bibitem{BK01}
J.~Brundan and A.~Kleshchev, \emph{Hecke-Clifford superalgebras, crystals of type $A_{2\ell}^{(2)}$ and modular branching rules for $\hat{S}_n$},
Represent. Theory  \textbf{5}
(2001), 317--403.

\bibitem{BK09}
\bysame, \emph{Blocks of cyclotomic Hecke algebras and Khovanov-Lauda algebras}, Invent. Math. \textbf{178} (2009), 451-484,

\bibitem{CR08}
J.~Chuang and R.~Rouquier, \emph{Derived equivalences for symmetric groups and $sl_2$-categorification},
Ann. of Math. (2) \textbf{167}
(2008), no.~1, 245--298.


\bibitem{DJKM82}
E.~Date, M.~Jimbo, M.~Kashiwara, and T.~Miwa, \emph{Transformation groups for soliton equations.
{E}uclidean {L}ie algebras and reduction of the {K}{P} hierarchy}, Publ. Res. Inst. Math. Sci. \textbf{18}
(1982), no.~3, 1077--1110.

\bibitem{DJ87}
R.~Dipper and G.~D.~James, \emph{Blocks and idempotents of Hecke algebras of general linear groups},
Proc. London Math. Soc. (3), \textbf{54} (1987), 57--82.

\bibitem{EN02}
K.~Erdmann and D.~K.~Nakano, \emph{Representation type of
Hecke algebras of type $A$}, Trans. Amer.~Math. Soc., \textbf{354} (2002),
275--285.

\bibitem{GR79}
P. Gabriel and Ch. Riedtmann, \emph{Group representations without groups},
Comment. Math. Helvetici \textbf{54} (1979) 240--287.

\bibitem{HMM09}
D.~Hill, G.~Melvin, and D.~Mondragon, \emph{Representations of quiver Hecke algebras via Lyndon bases},
J. Pure Appl. Algebra \textbf{216} (2012), no.~5, 1052--1079.

\bibitem{HK02} J.~Hong and S.-J.~Kang,
\emph{Introduction to Quantum Groups and Crystal Bases},
Graduate Studies in Mathematics, \textbf{42}. American Mathematical Society, Providence, RI, 2002.

\bibitem{JM97}
G.~D.~James and A.~Mathas, \emph{A q-analogue of the Jantzen.Schaper theorem}, Proc. London Math. Soc. (3) (1997), \textbf{74}, 241--274.

\bibitem{Ja69}
G. J. Janusz, \emph{Indecomposable Modules for finite groups},
Ann. of Math. \textbf{89} (1969), 209--241.

\bibitem{JM83}
M.~Jimbo and T.~Miwa, \emph{Solitons and infinite-dimensional {L}ie algebras}, Publ. Res. Inst. Math. Sci. \textbf{19}
(1983), no.~3, 943--1001.

\bibitem{Kac90} V.~Kac,
\emph{Infinite-dimensional Lie algebras},
Third edition, Cambridge University Press,Cambridge, 1990.


\bibitem{Kang03}
S.-J.~Kang, \emph{Crystal bases for quantum affine algebras and combinatorics of Young walls},
Proc. London Math. Soc. (3) \textbf{86}
(2003), no.~1, 29--69.


\bibitem{KK11}
S.-J. Kang and M. Kashiwara, \emph{Categorification of Highest Weight Modules via Khovanov-Lauda-Rouquier Algebras},
 Invent. Math. \textbf{190} (2012), no. 3, 699--742.


\bibitem{KKMMNN91} S.-J.~Kang, M.~Kashiwara, K. C.~Misra, T.~Miwa, T.~Nakashima, and A.~Nakayashiki,
{\it Affine Crystals and Vertex Models},
Infinite Analysis, Part A,B(Kyoto, 1991), Adv. Ser. Math. Phys. {\bf16}, World Scientific Publishing Co. Inc., River Edge, NJ, 1992, 449--484.

\bibitem{KKMMNN92} \bysame,
\emph{Perfect crystals of quantum affine Lie algebras},
Duke Math. J. \textbf{68} (1992), no. 3, 499--607.


\bibitem{KKO12}
S.-J. Kang, M. Kashiwara, and S.-j. Oh, \emph{Supercategorification of quantum Kac-Moody Algebras}, Adv. Math. \textbf{242} (2013), 116–-162. 



\bibitem{KKT11}
S.-J. Kang, M. Kashiwara, and S. Tsuchioka, \emph{Quiver Hecke superalgebras}, arXiv:1107.1039 (2011).


\bibitem{KangKwon08}
S.-J.~Kang and J.-H.~Kwon, \emph{Fock space representations of quantum affine algebras and generalized Lascoux-Leclerc-Thibon algorithm},
J. Korean Math. Soc. \textbf{45}
(2008), no.~4, 1135--1202.

\bibitem{KP11}
S.-J.~Kang and E.~Park, \emph{Irreducible modules over Khovanov-Lauda-Rouquier algebras of type $A_n$ and semistandard tableaux},
J. Algebra, \textbf{339} (2011), no.~1, 223--251.




\bibitem{Kash91} M.~Kashiwara,
{\it On crystal bases of the $q$-analogue of universal enveloping algebras},
Duke Math. J. {\bf63} (1991), no. 2, 465--516.


\bibitem{Kash93} \bysame,
{\it The crystal base and Littelmann's refined Demazure character formula},
Duke Math. J. {\bf71} (1993), no. 3, 839--858.


\bibitem{Kash11}
\bysame, \emph{Biadjointness in cyclotomic Khovanov-Lauda-Rouquier Algebras},
Publ. Res. Inst. Math. Sci. \textbf{48} (2012), no. 3, 501-–524.

\bibitem{KMPY96}
M.~Kashiwara, T.~Miwa, J.-U. H.~Petersen, and C.M.~Yung, \emph{Perfect crystals and $q$-deformed Fock spaces},
Selecta Math. (N.S.) \textbf{2} (1996), no.~3, 415--499.


\bibitem{KL09}
M.~Khovanov and A. D.~Lauda, \emph{A diagrammatic approach to categorification of quantum groups
  {I}}, Represent. Theory \textbf{13} (2009), 309--347.

\bibitem{KL11}
\bysame, \emph{A diagrammatic approach to categorification of quantum groups
  {II}}, Trans. Amer. Math. Soc. \textbf{363} (2011), no.~5, 2685--2700.

\bibitem{KR08}
A.~Kleshchev and A.~Ram, \emph{Homogeneous Representations of Khovanov-Lauda Algebras},
J. Eur. Math. Soc. \textbf{12} (2010), 1293--1306.


\bibitem{KR11}
\bysame, \emph{Representations of Khovanov-Lauda-Rouquier Algebras and Combinatorics of Lyndon Words},
Math. Ann. \textbf{349} (2011), no. 4, 943--975.

\bibitem{KX12}
S.~K\"onig and C.~C.~Xi, \emph{Affine cellular algebras},
Adv. Math. \textbf{229} (2012) 139-182.

\bibitem{Kr97}
H.~Krause, \emph{Stable equivalence preserves representation type},
Comment. Math. Helv., \textbf{72} (1997), 266--284.


\bibitem{Lauda11}
A. D.~Lauda, \emph{An introduction to diagrammatic algebra and categorified quantum $\mathfrak{sl}_2$},
Bull. Inst. Math. Acad. Sin. (N.S.) \textbf{7} (2012), no. 2, 165–270.

\bibitem{LV11}
A. D.~Lauda and M.~Vazirani, \emph{Crystals from categorified quantum groups},
  Adv. Math. \textbf{228} (2011), no.~2, 803--861.

\bibitem{Le00}
H.~Lenzing, \emph{Invariance of tameness under stable equivalence: Krause's theorem},
in \lq\lq Infinite Length Modules\rq\rq, H.~Krause and C.~M.~Ringel (eds.),
Trends Math., (2000), 405--418.

\bibitem{LM07}
S.~Lyle and A.~Mathas, \emph{Blocks of cyclotomic Hecke algebras}, Adv. Math. \textbf{216} (2007), 854?878.





\bibitem{Mathas99}
A.~Mathas, \emph{Iwahori-Hecke Algebras and Schur Algebras of the Symmetric Group},
University Lecture Series, \textbf{15}. American Mathematical Society, Providence, RI, 1999.

\bibitem{M74}
B.~M\"uller, \emph{The quotient category of Morita context},
J. Algebra \textbf{28} (1974), 389-407.


\bibitem{Ri89}
J.~Rickard, \emph{Derived categories and stable equivalence},
J. Pure and Appl. Alg. \textbf{61} (1989), 303--317.

\bibitem{Ringel75}
C.~M.~Ringel, \emph{The representation type of local algebras},
Representations of Algebras,
Lecture Notes in Math. \textbf{488} (1975), 282--305.

\bibitem{R08}
R.~Rouquier, \emph{2 {K}ac-{M}oody algebras}, arXiv:0812.5023 (2008).




\bibitem{Sagan90}
B.E.~Sagan, \emph{The ubiquitous Young tableau}, D. Stanton (ed.), Invariant theory and tableaux,
Springer-Verlag, New York, 1990, 262--298.


\bibitem{VV11}
M.~Varagnolo and E.~Vasserot, \emph{Canonical bases and {K}{L}{R}  algebras}, J. Reine Angew. Math. \textbf{659} (2011), 67--100.



\end{thebibliography}


\end{document}